\DeclareMathOperator{\Leb}{Leb}
\DeclareMathOperator{\Erg}{Erg}
\newcommand{\bor}[1][]{\mathcal{B}(#1)}
\newcommand{\N}{\mathbb N}
\newcommand{\Z}{\mathbb{Z}}
\newcommand{\Zd}{\mathbb{Z}^d}
\newcommand{\Q}{\mathbb{Q}}
\newcommand{\R}{\mathbb{R}}
\newcommand{\Rd}{\mathbb{R}^d}
\renewcommand{\P}{\mathbb{P}}
\newcommand{\E}{\mathbb{E}}
\newcommand{\Ed}{\mathbb{E}^d}
\newcommand{\Edo}{\overrightarrow{\mathbb{E}}^d}
\newcommand{\Ebarre}{\overline{\mathbb{E}}}
\newcommand{\Pbarre}{\overline{\mathbb{P}}}
\newcommand{\D}{\mathcal{D}}
\newcommand{\Eddo}{\overrightarrow{\mathbb{E}}^{d+1}_{\text{alt}}}
\renewcommand{\epsilon}{\varepsilon}
\renewcommand{\phi}{\varphi}
\renewcommand{\limsup}{\overline{\lim}}
\renewcommand{\liminf}{\underline{\lim}}
\newcommand{\pcdir}{\overrightarrow{p_c}^{\text{alt}}}
\newcommand{\Vddo}{{\mathbb{V}}^{d+1}}
\newcommand{\ie}{\emph{i.e. }}
\newcommand{\miniop}[3]{%
\renewcommand{\arraystretch}{0.6}
\begin{array}{c}
{\scriptstyle #1}\\
#2\\
{\scriptstyle #3}
\end{array}
\renewcommand{\arraystretch}{1}}
\newcommand{\Card}[1]{\vert #1 \vert}
\newcommand{\1}{1\hspace{-1.3mm}1}
\begin{document}

{
\newtheorem{theorem}{Theorem}[section]
\newtheorem{conjecture}[theorem]{Conjecture}

}
\newtheorem{lemme}[theorem]{Lemma}
\newtheorem{defi}[theorem]{Definition}
\newtheorem{coro}[theorem]{Corollary}
\newtheorem{rem}[theorem]{Remark}
\newtheorem{prop}[theorem]{Proposition}
\newtheorem*{hyp}{Assumptions}

\newcommand{\T}[2]{{#1}.{#2}} 

\title[Large deviations for the contact process]{Large deviations for the contact process in random environment}

{
\author{Olivier Garet}
\address{Institut \'Elie Cartan Nancy (math{\'e}matiques)\\
Universit{\'e} de Lorraine\\
Campus Scientifique, BP 239 \\
54506 Vandoeuvre-l{\`e}s-Nancy  Cedex France\\}
\email{Olivier.Garet@univ-lorraine.fr}
\author{R{\'e}gine Marchand}
\email{Regine.Marchand@univ-lorraine.fr}

}

\def\motsclefs{Random growth, contact process, random environment, shape theorem, large deviation inequalities}

\subjclass[2000]{60K35, 82B43.}
\keywords{\motsclefs}

\begin{abstract}
The asymptotic shape theorem for the contact process in random environment gives the existence of a norm $\mu$ on $\Rd$ such that the hitting time $t(x)$ is asymptotically equivalent to $\mu(x)$ when the contact process survives. We provide here exponential upper bounds for the probability of the event $\{\frac{t(x)}{\mu(x)}\not\in [1-\epsilon,1+\epsilon]\}$; these bounds are optimal for independent random environment. 
As a special case, this gives the large deviation inequality for the contact process in  a deterministic environment, which, as far as we know, has not been established yet.
\end{abstract}

{\maketitle
}
\setcounter{tocdepth}{1}
 
\section{Introduction}
Durrett and Griffeath~\cite{MR656515} proved that when the contact process on $\Zd$ starting from the origin survives, the set of sites occupied before time $t$ satisfies an asymptotic shape theorem, as in first-passage percolation. In~\cite{GM-contact}, we extended this result to the case of the contact process in a random environment. 

The random environment is given by a collection $(\lambda_e)_{e \in \Ed}$ of positive random variables indexed by the set of edges of the grid $\Zd$. Given a realization $\lambda$ of this environment, the contact process $(\xi^0_t)_{t\ge 0}$ in the environment $\lambda$ is a homogeneous Markov process taking its values in the set $\mathcal{P}(\Zd)$ of subsets of $\Zd$. If 
$\xi_t^0(z)=1$, we say that $z$ is occupied at time $t$, while if $\xi_t^0(z)=0$, we say that $z$ is empty at time~$t$.
The initial value of the process is $\{0\}$ and the process evolves as follows: 
\begin{itemize}
\item an occupied site becomes empty at rate $1$,
\item an empty site $z$ becomes occupied at rate:
$\displaystyle \sum_{\|z-z'\|_1=1} \xi_t^0(z')\lambda_{\{z,z'\}},$
\end{itemize}
all these evolutions being independent. We study then the hitting time $t(x)$ of a site $x$: 
$$t(x)=\inf\{ t\ge 0: \; x \in \xi^0_t \}.$$

In~\cite{GM-contact}, we proved that under good assumptions on the random environment, there exists a norm $\mu$ on $\Rd$ such that for almost every environment, the family $(t(x))_{x\in\Zd}$ satisfies, when $\|x\|$ goes to $+\infty$, 
$$t(x)\sim \mu(x) \quad \text{on the event ``the process survives''}.$$ 
We focus here on the large deviations of the hitting time $t(x)$ for the contact process in random environment. As far as we know, such inequalities for the classical contact process have not been studied yet, they will be contained in our results. 

The assumptions we will require on the random environment are the ones we already needed in~\cite{GM-contact}. We denote by $\lambda_c(\Zd)$ the critical intensity of the classical contact process on $\Zd$, we fix $\lambda_{\min}$ and $\lambda_{\max}$ such that
$\lambda_c(\Zd)<\lambda_{\min}\le \lambda_{\max}$ and we set $\Lambda=[\lambda_{\min},\lambda_{\max}]^{\Ed}$.
\begin{hyp}[E]
The support of the law $\nu$ of the random environment is included in $\Lambda=[\lambda_{\min},\lambda_{\max}]^{\Ed}$; the law $\nu$ is stationary, and if $\Erg(\nu)$ denotes the set of $x\in\Zd \backslash \{0\}$ such that the translation along vector $x$ is ergodic for $\nu$, then the cone generated by $\Erg(\nu)$ is dense in $\Rd$. 
\end{hyp}
This last condition is obviously fulfilled if $\Erg(\nu)=\Zd\backslash\{0\}$. We will sometimes require the stronger following assumptions:
\begin{hyp}[E'] The law $\nu$ of the random environment is a product measure: 
$\nu=\nu_0^{\otimes\Ed}$, where $\nu_0$ is some probability measure on $[\lambda_{\min},\lambda_{\max}]$.
\end{hyp}
By taking for $\nu$ the Dirac mass~$(\delta_{\lambda})^{\otimes \Ed}$, with $\lambda>\lambda_c(\Zd)$, which clearly fullfills these assumptions, we recover the case of the classical contact process in a deterministic environment. 

 For $\lambda \in \Lambda$, we denote by $\P_\lambda$ the (quenched) law of the contact process in environment $\lambda$, and by ${\Pbarre}_\lambda$ the (quenched) law of the contact process in environment $\lambda$ conditioned to survive. 
We define then the annealed probability measures $\Pbarre$ and $\P$:
$${\Pbarre}(.)=\int_\Lambda {\Pbarre}_\lambda(.)\ d\nu(\lambda)\quad \text{ and }\quad {\P}(.)=\int_\Lambda {\P}_\lambda(.)\ d\nu(\lambda).$$

We will study separately the probabilities of the ``upper large deviations'' and the ``lower large deviations'', \ie respectively of the events $\{t(x)\ge (1+\epsilon)\mu(x)\}$ and  $\{t(x)\le (1-\epsilon)\mu(x)\}$.

The most general result concerns the quenched ``upper large deviations'' for the hitting time $t(x)$ and the coupling time
$$t'(x)=\inf\{ T\ge 0:\;\forall t\ge T\quad \xi^0_t(x)= \xi^{\Zd}_t(x)\},$$ 
where $(\xi^{\Zd}_t)_{t \ge 0}$ is the contact process starting from $\Zd$, and for the set of hit points $H_t$ and the coupled region $K'_t$:
\begin{eqnarray*}
 H_t  =  \{x \in \Zd: \; t(x) \le t\}, &&\tilde{H}_t=H_t+[0,1]^d\\
K'_t  =  \{x\in\Zd: \; t'(x)\le t\}, &&\tilde{K}'_t=K'_t+[0,1]^d.
\end{eqnarray*}
We only require here  Assumptions~$(E)$.
\begin{theorem}
\label{theoGDUQ}
Let $\nu$ be an environment law satisfying Assumptions~$(E)$. \\
For every $\epsilon>0$, there exist $B>0$ and a random variable $A(\lambda)$ such that
for $\nu$ almost every environment $\lambda$, for every $x \in \Zd$,
\begin{eqnarray}
 \Pbarre_{\lambda}\left(t(x)\ge\mu(x)(1+\epsilon)\right) & \le & A(\lambda)e^{-B\|x\|}, \label{venus} \\
 \Pbarre_{\lambda}\left(t'(x)\ge\mu(x)(1+\epsilon)\right) & \le & A(\lambda)e^{-B\|x\|},  \label{tcouple} \\
 \Pbarre_{\lambda}\left(\forall t \ge T \quad (1-\epsilon)t A_\mu \subset \tilde{K'_t} \cap \tilde{H_t} \right) & \ge & 1-A(\lambda)e^{-BT}.  \label{audessousforme} 
\end{eqnarray}
\end{theorem}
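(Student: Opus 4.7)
The key tool is the essential hitting time $\sigma(x)$ introduced in \cite{GM-contact}: on the survival event $t(x) \le \sigma(x)$, and $\sigma$ satisfies an approximate subadditive inequality in which a shifted copy of $\sigma$ appears together with a regenerated Harris system and a re-read environment. Thus $\{t(x) \ge a\}$ is contained in $\{\sigma(x) \ge a\}$ once we condition on survival, and the same logic will bound $t'(x)$ up to an equilibration step: once $x$ is reached essentially, the contact process in a growing box around $x$ couples with the process started from $\Zd$ at exponential rate, a standard consequence of $\lambda_{\min}>\lambda_c(\Zd)$ and attractiveness.

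The first main step is an \emph{annealed} exponential bound: there exist $B_0,C_0>0$ such that
$$\Pbarre\bigl(\sigma(x) \ge \mu(x)(1+\epsilon)\bigr) \le C_0 e^{-B_0\|x\|} \quad \text{for every } x \in \Zd.$$
I would obtain it by (i) decomposing $\sigma(nx_0)$ via subadditivity into $n$ increments whose renormalized versions carry uniform exponential moments, via a Bezuidenhout--Grimmett block construction run inside a uniformly elliptic random medium, which is already the engine of the shape theorem of \cite{GM-contact}; (ii) a Cramér-type deviation bound along each direction $x_0 \in \Erg(\nu)$; (iii) an extension to arbitrary directions using the cone-density condition of (E) together with the continuity of $\mu$.

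Passing from annealed to quenched is then a Markov--Borel--Cantelli step: integrating in $\lambda$ yields
$$\sum_{x\in\Zd} e^{(B_0/2)\|x\|}\int \Pbarre_\lambda\bigl(t(x) \ge \mu(x)(1+\epsilon)\bigr)\,d\nu(\lambda) <\infty,$$
so the random variable $A(\lambda) := \sup_{x\in\Zd} e^{B\|x\|}\,\Pbarre_\lambda\bigl(t(x) \ge \mu(x)(1+\epsilon)\bigr)$ with $B=B_0/2$ is $\nu$-a.s.\ finite and realizes \eqref{venus}; the same argument applied to $t'(x)$ gives \eqref{tcouple}. For \eqref{audessousforme} I would cover $\partial(tA_\mu)\cap\Zd$ by a polynomial-sized $\epsilon$-net $\mathcal{N}_t$: on the complement of the event in \eqref{audessousforme} some scale $t\ge T$ and some $y \in \mathcal{N}_t$ satisfy $t(y) \ge t$ or $t'(y) \ge t$, hence $t(y)/\mu(y) \ge 1+\eta$ (or its $t'$ analogue) for an $\eta=\eta(\epsilon)>0$. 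A union bound over $y$ and a sum over dyadic scales $t \in \{2^k : k \ge \log_2 T\}$ converts \eqref{venus}--\eqref{tcouple} into the exponential rate in $T$.

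The main obstacle is the annealed exponential bound in the first step: the increments produced by subadditivity of $\sigma$ share the same quenched environment and are therefore not independent. One has to run the block renormalization directly inside the random medium, using only the uniform ellipticity $\lambda_{\min}>\lambda_c(\Zd)$ and the stationarity of $\nu$, to obtain \emph{uniform} super-exponential tails for the block-hitting increments before any classical Cramér bound can be invoked; the cone-density clause in (E) is then what allows the final transfer from a fixed ergodic direction to the full norm $\mu$.
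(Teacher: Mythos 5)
Your overall framing—subadditivity of the essential hitting time $\sigma$, independence of the regenerated increments under the quenched measure, and the cone-density argument to pass from ergodic directions to the full norm—matches the paper. The deduction of \eqref{tcouple} from \eqref{venus} via the coupled region, and of \eqref{audessousforme} from \eqref{venus}--\eqref{tcouple} by a union bound over scales and lattice points, is also essentially the paper's route. The gap is entirely in your first main step.

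You propose to first establish an \emph{annealed} exponential bound
$\Pbarre\bigl(\sigma(x)\ge(1+\epsilon)\mu(x)\bigr)\le C_0 e^{-B_0\|x\|}$
under Assumptions~$(E)$, and then pass to the quenched statement by Markov/Borel--Cantelli. That transfer step is sound and is exactly what the paper uses for the \emph{lower} large deviations (after Theorem~\ref{dessouscchouette}, which requires the stronger~$(E')$); but the annealed upper bound you start from does not follow from~$(E)$ by the tools you invoke, and the paper explicitly declines to prove it: the remark immediately after Theorem~\ref{theoGDUQ} says that $A(\lambda)$ ``could often be large'' and that the annealed upper deviations are deferred to a forthcoming paper. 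The obstruction is that the ``Cramér-type deviation bound'' you want to apply to $\sum_{i=0}^{n-1}\sigma(y)\circ\tilde\theta_y^i$ requires i.i.d.\ (or at least uniformly concentrating) increments, and under $\Pbarre$ the increments are independent only conditionally on $\lambda$, with laws $\Pbarre_{iy.\lambda}$ that move with the environment. Integrating the Chernoff bound $\prod_{i=0}^{n-1}\Ebarre_{iy.\lambda}[e^{\alpha\sigma(y)}]$ over $\nu$ produces $\E_{\nu}\bigl[\exp\bigl(\sum_{i}\log\Ebarre_{iy.\lambda}[e^{\alpha\sigma(y)}]\bigr)\bigr]$, and for a general stationary ergodic $\nu$ (which is all $(E)$ gives) this can grow faster than $e^{n(\bar g + o(1))}$: the pointwise ergodic theorem supplies no rate of convergence for the Birkhoff sums, so the exponential moment of their fluctuation can be super-exponential. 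In other words, under~$(E)$ alone the annealed tail of $\sigma(x)/\mu(x)$ need not be exponential in $\|x\|$.

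The paper therefore works directly at the quenched level. After proving the uniform exponential moment bound $\Ebarre_\lambda[e^{\gamma_1\sigma(x)}]\le e^{\beta_1\|x\|_1}$ for all $\lambda\in\Lambda$ (Theorem~\ref{theomomexpsigma}, which is indeed the block renormalization you describe, run inside the uniformly elliptic medium), it picks $y\in F\subset\Erg(\nu)$, writes $t(x)\le\sum_{i=0}^{n-1}\sigma(y)\circ\tilde\theta_y^i+\sigma(x-ny)\circ\tilde\theta_y^n$, controls the remainder term by the uniform exponential moments, and controls the main sum via the Birkhoff ergodic theorem applied to $\lambda\mapsto\log\Ebarre_\lambda\bigl[\exp\bigl(\alpha(\sigma(y)-(1+\epsilon/4)\mu(y))\bigr)\bigr]$: for $\nu$-a.e.\ $\lambda$, $\limsup_n\frac1n\log\Pbarre_\lambda(\cdot)\le\log c_\alpha<0$. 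The random constant $A(\lambda)$ in the theorem is precisely the price of using the ergodic theorem rather than an annealed estimate — it encodes how long the Birkhoff averages take to equilibrate in the given realization of the environment. If you want to salvage your route, you should prove the quenched exponential rate directly via this ergodic-theorem argument rather than claiming an annealed input that $(E)$ does not provide.
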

We can note that the random variable $A(\lambda)$ is almost surely finite, but that it could often be large. This question will be studied  in a forecoming paper about annealed upper large deviations~\cite{GM-contact-gd-annealed}. The key point of the proof of Theorem~\ref{theoGDUQ}, interesting on its own, is to control the times $s$ when a site $x$ is occupied and has infinite progeny. We will denote this event by $\{(0,0) \to (x,s) \to \infty\}$ by analogy with percolation.
\begin{theorem}
\label{lemme-pointssourcescontact}
There exist $C, \theta,A,B>0$ such that $ \forall\lambda\in\Lambda \quad \forall x\in\Zd$
$$\forall t\ge C\|x\|\quad \Pbarre_{\lambda}\left(\Leb\{s \in [0,t]: (0,0)\to (x,s) \to \infty\} \le \theta t
\right)\le A\exp(-Bt). $$
\end{theorem}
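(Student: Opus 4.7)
The plan is twofold. First, I want to reduce the statement to a density estimate for a contact process freshly restarted at $(x,\tau)$, where $\tau := t(x)$. Second, I want to prove this density estimate by exploiting the fact that $\{(0,0)\to(x,s)\}$ and $\{(x,s)\to\infty\}$ are measurable with respect to disjoint parts of the Harris graphical construction, hence independent, and then use a block/renewal argument to upgrade the resulting first-moment bound to an exponential concentration.

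The first step is to establish a uniform-in-$\lambda\in\Lambda$ upper bound on $\tau$: there should exist $C_1,A_1,B_1>0$, independent of $\lambda\in\Lambda$ and $x\in\Zd$, such that
$$\P_\lambda\bigl(t(x)>C_1\|x\|,\ \xi^0\text{ survives}\bigr)\le A_1 e^{-B_1\|x\|}.$$
Stochastic monotonicity in the rates allows coupling from below with the homogeneous process at rate $\lambda_{\min}>\lambda_c(\Zd)$; this gives a uniform positive survival probability $\P_\lambda(\xi^0\text{ survives})\ge\rho_0>0$ and a uniform linear propagation speed, and the classical Durrett--Griffeath large-deviation estimate for hitting times in the homogeneous case transfers by the same coupling (the relevant pieces are already present in~\cite{GM-contact}). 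Choosing $C=2C_1$ then forces $\tau\le t/2$ outside a $\P_\lambda$-event of mass at most $Ae^{-Bt}$ whenever $t\ge C\|x\|$.

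On $\{\tau\le t/2\}$, the strong Markov property of the Harris construction at the stopping time $\tau$ produces a fresh, independent graphical construction on $[\tau,\infty)$, on which I build a contact process $\tilde\xi^{\{x\}}$ starting from $\{x\}$ at time $\tau$. Using $(0,0)\to(x,\tau)$, one obtains
$$L_t:=\Leb\{s\in[0,t]:(0,0)\to(x,s)\to\infty\}\ \ge\ Y(t-\tau),$$
where $Y(r):=\Leb\{u\in[0,r]:x\in\tilde\xi^{\{x\}}_u,\ (x,u)\to\infty\}$ in the fresh construction, and $Y$ is independent of the pre-$\tau$ history with the law of the same functional of a fresh contact process from $\{x\}$. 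For $\tau\le t/2$ one has $\theta t\le 2\theta(t-\tau)$, so it suffices to prove a uniform exponential concentration of the form
$$\P_\lambda\bigl(Y(r)\le 2\theta r,\ \tilde\xi^{\{x\}}\text{ survives}\bigr)\le A e^{-Br}, \qquad r\ge r_0, \ \lambda\in\Lambda, \ x\in\Zd,$$
and then divide by $\P_\lambda(\xi^0\text{ survives})\ge\rho_0$ to pass from $\P_\lambda$ to $\Pbarre_\lambda$.

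The crucial structural point for the density estimate is that $\{x\in\tilde\xi^{\{x\}}_u\}$ (a function of the Poisson fields on $[0,u]$) and $\{(x,u)\to\infty\}$ (a function of the Poisson fields on $[u,\infty)$) are independent, with the second event of probability $\rho_\lambda(x)\ge\rho_0$; combined with a uniform lower bound on $\P_\lambda(x\in\tilde\xi^{\{x\}}_u)$, this gives $\E[Y(r)]\ge c_0 r$ uniformly in $\lambda$ and $x$. The main obstacle is to upgrade this first-moment lower bound to exponential concentration. I would proceed by cutting $[0,r]$ into blocks of fixed length $T_0$, designing a Bezuidenhout--Grimmett-type good-block event that certifies a $\Theta(T_0)$ contribution to $Y$ with uniform probability $\delta>0$ (this is where the uniform supercriticality $\lambda_{\min}>\lambda_c(\Zd)$ is essential), controlling the finite-range dependence across consecutive blocks by a stochastic domination with a $k$-dependent Bernoulli field, and applying a Chernoff bound for $k$-dependent sums. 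A remaining subtlety is that, on the event $\{\tilde\xi^{\{x\}}\text{ dies}\}$ where $\xi^0$ may nevertheless survive through other ancestral paths, the above restart is void; this is handled by iterating the restart at successive candidate space-time points $(y,\tau')$ with $y$ close to $x$ along a geometric sequence whose delay has exponential tails, each trial succeeding with uniform conditional probability $\ge \rho_0$, in the spirit of the multi-scale restart machinery of~\cite{GM-contact}.
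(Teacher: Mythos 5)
Your overall strategy---restart the graphical construction, exploit the independence between the past (occupation of $x$) and the future (infinite progeny), and then run a block argument to upgrade a first-moment bound to exponential concentration---is in the same family as the paper's proof, which also proceeds by a restart followed by a block estimate. However, there are two places where the plan as sketched has real gaps.

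The first and most important gap is the concentration step. You propose to cut $[0,r]$ into temporal blocks, design a good-block event with uniform probability, dominate by a $k$-dependent Bernoulli field, and apply a Chernoff bound. But the events whose Lebesgue density you need to control, $\{x\in\tilde\xi^{\{x\}}_u\}\cap\{(x,u)\to\infty\}$, are \emph{not} $k$-dependent in $u$: the ``infinite-progeny'' factor is a tail event involving the entire future of the graphical construction and the same tail is shared across all blocks. Moreover, the good blocks that survive your domination are precisely those lying on an infinite macroscopic cluster, and the density of a fixed macroscopic site on that infinite cluster is a conditioned quantity, not a sum of $k$-dependent indicators. This is exactly the obstruction that the paper circumvents by building a dependent \emph{oriented} percolation at the block scale and invoking a dedicated density result for immortal descendants in that oriented percolation (Proposition~\ref{lineairegamma}, taken from~\cite{GM-dop}), namely that $\bar\gamma(\theta,\bar x^A,\bar x_0)$ has exponential tails. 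Replacing that proposition by a direct Chernoff/LSS bound does not work without reproving something of comparable strength.

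The second gap is the treatment of the event $\{\tilde\xi^{\{x\}}\text{ dies}\}\cap\{\tau^0=+\infty\}$. You suggest iterating the restart at nearby points $y$ along a geometric sequence. But immortal occupation of $y$ does not yield immortal occupation of $x$ without an additional mechanism that carries the infection from $y$ back to $x$ at a positive density of times, uniformly over $\lambda\in\Lambda$; this is precisely what the paper's block event is engineered to do (its third clause forces the target $x_1=\{x\}$ to be occupied for an interval of length $\delta$ each time the corresponding macroscopic bond is used). Also note that the natural shortcut of restarting at $\sigma(x)$ instead of $t(x)$ is unavailable: the exponential moment bound for $\sigma(x)$ (Theorem~\ref{theomomexpsigma}) is itself derived \emph{from} the statement you are trying to prove, so this would be circular; only the stretched-exponential bound~\eqref{asigma} is available a priori, which is too weak here. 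Finally, your first-moment claim $\E[Y(r)]\ge c_0 r$ also requires a uniform-in-$\lambda$, uniform-in-$u$ lower bound on $\P_\lambda(x\in\tilde\xi^{\{x\}}_u)$, which is plausible by supercriticality and duality but is not entirely free; the paper's construction sidesteps this by working with the macroscopic percolation rather than the pointwise occupation density.
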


For the ``lower large deviations'', the subadditivity gives a nice setting and allows to state a large deviations principle in the spirit of Hammersley~\cite{MR0370721}.
\begin{theorem}
\label{LDP}
Let $\nu$ be an environment law satisfying Assumptions~$(E)$.  \\
Let $x\in\Zd$. There exist a convex function $\Psi_x$ and a concave function $K_x$ taking their values in $\R_+$ such that for $\nu$ almost every $\lambda$,
\begin{eqnarray*}
 \forall u>0 \quad \lim_{n\to +\infty} -\frac1{n}\log \Pbarre_{\lambda}(t(nx)\le nu) & = & \Psi_x(u); \\
\forall\theta\ge 0\quad\lim_{n\to +\infty} -\frac1{n}\log \Ebarre_{\lambda}[e^{-\theta t(nx)} ]& = & K_x(\theta).
\end{eqnarray*}
The functions $\Psi_x$ and $K_x$ moreover satisfy the reciprocity relations:
$$\forall u>0 \quad \forall \theta\ge 0\quad \Psi_x(u)=\sup_{\theta\ge 0} \{K_x(\theta)-\theta u\}\text{ and }K_x(\theta)=\inf_{u>0} \{\Psi_x(u)+\theta u\}.$$ 
\end{theorem}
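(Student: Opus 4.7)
The proof follows Hammersley's subadditive scheme~\cite{MR0370721}: establish a multiplicative inequality for a weighted generating function of $t(nx)$, apply Kingman's theorem to pass to the limit, and close with Legendre--Fenchel duality.

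\textbf{Subadditive inputs.} Fix $\theta\ge 0$, $u>0$ and set
$$q_n(\lambda) := \E_\lambda[e^{-\theta t(nx)}\1_{\text{Surv}}],\qquad p_n(\lambda) := \P_\lambda(t(nx)\le nu,\,\text{Surv}),$$
where Surv denotes the event of survival $\{\forall t\ge 0,\ \xi^0_t\ne\emptyset\}$. In the graphical construction one has $t((n+m)x)\le t(nx)+T$, where $T$ is the infimum of $s$ such that $(nx,t(nx))\to((n+m)x,t(nx)+s)$. Conditioning on $\mathcal{F}_{t(nx)}$, the strong Markov property together with the monotonicity of the contact process (the restarted initial state $\xi^0_{t(nx)}$ dominates $\{nx\}$, so it hits faster and survives at least as well) yields the supermultiplicative inequalities
$$q_{n+m}(\lambda)\ge q_n(\lambda)\,q_m(\tau_{nx}\lambda),\qquad p_{n+m}(\lambda)\ge p_n(\lambda)\,p_m(\tau_{nx}\lambda).$$
Taking logarithms, $-\log q_n$ and $-\log p_n$ are subadditive over the measure-preserving system $(\Lambda,\nu,\tau_x)$, where I write $\tau_x$ for translation along $x$.

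\textbf{Passage to the limit.} Since $\lambda_{\min}>\lambda_c$, the survival probability $\P_\lambda(\text{Surv})$ is uniformly bounded away from $0$ on $\Lambda$, and a short-time estimate yields a uniform lower bound on $q_1,p_1$, so Kingman's integrability condition is met. Kingman's theorem then supplies $\nu$-a.s.\ convergence of $-\tfrac1n\log q_n$ and $-\tfrac1n\log p_n$ to $\tau_x$-invariant limits. Under Assumption $(E)$, the density of $\Erg(\nu)$ combined with the monotonicity and continuity arguments developed in~\cite{GM-contact} for the shape theorem promote these limits to deterministic functions $K_x(\theta)$ and $\Psi_x(u)$. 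Since $\P_\lambda(\text{Surv})$ is bounded away from $0$ and $1$ on $\Lambda$, the same limits govern $-\tfrac1n\log\Pbarre_\lambda(t(nx)\le nu)$ and $-\tfrac1n\log\E_\lambda[e^{-\theta t(nx)}\mid\text{Surv}]$.

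\textbf{Shape and duality.} At each $n$, $\theta\mapsto\log\E_\lambda[e^{-\theta t(nx)}\1_{\text{Surv}}]$ is convex (a log-Laplace transform), so $K_x$ is concave. For $\Psi_x$, the supermultiplicativity of $p_n$ applied with mixed time budgets $pnu_1$ and $(q-p)nu_2$ gives $\Psi_x(\alpha u_1+(1-\alpha)u_2)\le\alpha\Psi_x(u_1)+(1-\alpha)\Psi_x(u_2)$ for rational $\alpha=p/q\in(0,1)$, and hence for all $\alpha\in[0,1]$ by lower semicontinuity of the limit. The reciprocity formulas are then the Legendre--Fenchel duality between $\Psi_x$ and $K_x$: the identity $K_x(\theta)=\inf_{u>0}\{\Psi_x(u)+\theta u\}$ follows from
$$\E_\lambda[e^{-\theta t(nx)}\1_{\text{Surv}}]=\int_0^\infty\theta e^{-\theta u}\P_\lambda(t(nx)\le u,\text{Surv})\,du$$
by a Laplace-type analysis (Chernoff upper bound and lower bound by restricting the integral to a neighborhood of the minimiser), and $\Psi_x(u)=\sup_{\theta\ge 0}\{K_x(\theta)-\theta u\}$ is its biconjugate. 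The central technical obstacle is the deterministic-limit step: when $x\notin\Erg(\nu)$, promoting the $\tau_x$-invariance delivered by Kingman to genuine constancy requires the full $\Z^d$-stationarity of $\nu$ together with the density of the ergodic cone, via a comparison between hitting times along $x$ and along nearby ergodic directions.
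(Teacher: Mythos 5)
Your proposal is correct and mirrors the paper's approach in all essentials: Hammersley's supermultiplicative scheme, Kingman's subadditive ergodic theorem, convexity of $\Psi_x$ from mixed time budgets, and Legendre--Fenchel duality via a Laplace integral. The one genuine difference in routing is the choice of concatenation device. You apply the strong Markov property directly at the stopping time $t(nx)$ and track the survival event through the monotone coupling; the paper instead transfers to the essential hitting time $\sigma$ via the sandwich $\{t(x)\le t,\,\tau^x\circ\theta_{t(x)}=\infty\}\subset\{\sigma(x)\le t\}\subset\{t(x)\le t\}$ (which changes $-\log$ of the generating function only by an additive constant $R$) and then invokes Proposition~\ref{magic}, which packages exactly the independence and translation property you derive by hand. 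Both yield the same subadditive inequality $g_{n+p}(\lambda)\le g_n(\lambda)+g_p(T_x^n\lambda)$ over the stationary system $(\Lambda,\nu,T_x)$, so the choice is essentially cosmetic; the $\sigma$-route is slightly cleaner because survival conditioning is built into $\Pbarre_\lambda$ and the $(\sigma(x)\circ\tilde\theta_x^j)_j$ are genuinely i.i.d., whereas your argument must (and does, correctly) use monotonicity to pass from the restarted state $\xi^0_{t(nx)}$ down to $\{nx\}$. On the step you flag as ``the central technical obstacle'' -- promoting the $T_x$-invariant Kingman limit to a constant when $T_x$ itself need not be ergodic -- the paper is in fact no more explicit than you are: it directly identifies $\Psi_x(u)$ with $\inf_n\frac1n G_n^x(u)$, which in full rigor requires exactly the kind of comparison with nearby ergodic directions you allude to. Finally, the hard direction $K_x(\theta)\ge\inf_{u>0}\{\Psi_x(u)+\theta u\}$ is where the paper invests real work: it introduces the sets $E_{n,\epsilon}=\{\lambda:\,g_n^{x,u}(\lambda)\ge G_n^x(u)-n\epsilon\}$ and bounds the Laplace integral $\int_0^\infty n\theta e^{-\theta nu}e^{-R}\Pbarre_\lambda(\sigma(nx)<nu)\,du$ over $E_{n,\epsilon}$ by splitting at $b=M/\theta$. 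Your ``Chernoff upper bound / Laplace lower bound near the minimiser'' sketch is the right idea, but note that the integrand is random in $\lambda$ and the bound must hold $\nu$-a.s., so one cannot simply quote finite-dimensional Fenchel biconjugacy; the $E_{n,\epsilon}$ device is precisely what controls the random fluctuations uniformly enough to pass to the $\nu$-integral. Filling in that estimate is the main thing your proposal still owes.
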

To obtain effective large deviation inequalities, we moreover have to prove that $\Psi_x(u)>0$ if  $u<\mu(x)$. More precisely, 
\begin{theorem}
\label{dessouscchouette}
Let $\nu$ be an environment law satisfying Assumptions~$(E')$. \\
For every $\epsilon>0$, there exist $A,B>0$ such that for every $x \in \Zd$, for every $t \ge 0$,
\begin{eqnarray}
\P (t(x)\le (1-\epsilon)\mu(x)) & \le & A\exp(-B\|x\|), \label{defontenay}\\
\P(\forall s \ge t \quad  H_s \subset (1+\epsilon)s A_\mu) & \ge & 1-A\exp(-Bt)\label{decadix}.
\end{eqnarray}
\end{theorem}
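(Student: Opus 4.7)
The plan is to establish (\ref{defontenay}) first and then derive (\ref{decadix}) from it by a standard peeling argument. If $H_s \not\subset (1+\epsilon)s A_\mu$, there exists $x \in H_s$ with $\mu(x) > (1+\epsilon)s$, so $t(x) \le s \le (1-\epsilon')\mu(x)$ with $\epsilon' = \epsilon/(1+\epsilon)$. A speed-of-light estimate for the contact process confines the relevant $x$ to $\|x\| \le Cs$, leaving polynomially many sites. Summing (\ref{defontenay}) over these sites and over integer times $s \ge t$ produces the desired exponential bound (\ref{decadix}).

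For (\ref{defontenay}) itself, my plan is to combine the Hammersley-type large deviation principle of Theorem~\ref{LDP} with the product structure provided by (E'). Theorem~\ref{LDP} already furnishes, for each rational direction, a convex non-negative rate function $\Psi_x$ satisfying $\Psi_x(\mu(x)) = 0$ (the zero is forced by the shape theorem of \cite{GM-contact}). The heart of the argument is then to show that, under (E'), $\Psi_x(u) > 0$ strictly for every $u < \mu(x)$. By convexity, non-negativity, and the vanishing at $\mu(x)$, strict positivity on $[0,\mu(x))$ is equivalent to ruling out that $\Psi_x$ is identically zero on some interval $[u_0,\mu(x)]$ abutting $\mu(x)$.

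This strict-positivity step is the main obstacle, and it is where independence of the environment is genuinely used. The natural approach is a mesoscopic block argument: around a tube containing $[0,nx]$ carve $\Theta(n)$ cells whose edge sets are pairwise disjoint, so that by (E') the environments inside the cells form an i.i.d.\ family. An event $\{t(nx) \le nu\}$ with $u < \mu(x)$ forces a constant fraction of these cells to contribute a local passage time strictly below what the shape theorem prescribes on average. The delicate point is to associate to each cell a local passage-time variable that is a function only of the variables in that cell, and whose average reconstructs $\mu$ up to negligible error; this is where Theorem~\ref{lemme-pointssourcescontact} enters, letting us anchor the infection trajectory between cells at times when the relevant sources have infinite progeny, so that the decomposition is cleanly defined. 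A Cram{\'e}r-type estimate applied to the resulting i.i.d.\ contributions then yields an exponential upper bound of the form $A e^{-Bn}$ on $\P(t(nx) \le nu)$, hence strict positivity of $\Psi_x$ on $(0,\mu(x))$.

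The remaining step is to upgrade from the directional bound along sequences $(nx)$ to the uniform bound over all $x \in \Zd$ required in (\ref{defontenay}). This is handled by combining the convex/homogeneous dependence of $\Psi$ on $x$---inherited from subadditivity of the hitting times via the restart construction of \cite{GM-contact}---with a compactness argument on the unit sphere and a lower semicontinuity property of the rate in the direction. Integration of the uniform quenched bound against $\nu$ finally passes from quenched to annealed and delivers (\ref{defontenay}). The main technical worry will be engineering the cell-wise passage-time variables so that they are genuinely independent and have the correct mean, a renormalisation step that is standard in the contact-process literature but whose continuous-time Poissonian nature makes the bookkeeping nontrivial.
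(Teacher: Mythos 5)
Your overall direction for (\ref{defontenay}) -- dynamic renormalization in the spirit of Grimmett--Kesten, using the product structure of $\nu$ under (E') to get independence across cells -- is the right one, and your deduction of (\ref{decadix}) from (\ref{defontenay}) by a union bound over $x$ with $\mu(x)>(1+\epsilon)t$ is sound (in fact simpler than the paper's route, which proves a bound on $\xi^0_t$ first, namely Theorem~\ref{GDdessous}, and then deduces both (\ref{decadix}) and (\ref{defontenay}) from it). However, the plan for the central estimate has genuine gaps.

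First, Theorem~\ref{lemme-pointssourcescontact} is the wrong tool here and does not play the role you assign it. That theorem controls the Lebesgue measure of times $s$ with $(0,0)\to(x,s)\to\infty$, and in the paper it feeds into the proof of exponential moments of $\sigma(x)$ for the \emph{upper} large deviations (Theorem~\ref{theomomexpsigma}). Lower large deviations are about an infection path that is \emph{too fast}: one conditions on the existence of such a path and shows this forces many ``bad'' boxes. There is no need -- and no room -- for infinite-progeny anchoring: the path is there, and what one controls is the speed at which it can cross a box, not whether any of its points survive forever. Your proposed ``cell-wise passage-time variables whose average reconstructs $\mu$'' also does not line up with the contact-process setting: unlike FPP slab crossing times, the time to traverse a spatial cell is a random quantity of the dynamics (the process may even die inside the cell), so one works with good/bad space-time boxes and then counts bad ones along the path, not with additive i.i.d.\ contributions.

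Second, and more importantly, you have not confronted the step that makes the renormalization nontrivial for the contact process: defining a good space-time-box event whose probability can be pushed to $1$. The good event needs to control the $\alpha LN$-time descendants of \emph{every} space-time point $(x_0,t_0)$ in $B_N$, which is order $N^{d+1}$ starting configurations; a naive union bound is useless. The paper's Lemma~\ref{catendvers12} resolves this by a coupling-from-the-past argument: it finds a single point $(0,-k)$ (with $k$ of order $L_1N$) that survives, grows correctly, and -- via the coupled region $K'$ and a dual/time-reversal bound on lifetimes -- is an ancestor of every long-lived descendant of $B_N$, so that controlling the one point controls them all. Without this device your good event has no reason to have probability near $1$. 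Finally, the ``Cramér-type estimate on the resulting i.i.d.\ contributions'' hides two further ingredients you would need to supply: the Grimmett--Kesten loop-removal extraction $\overline\Gamma$ to make the box coordinates spatially distinct (so that the Liggett--Schonmann--Stacey comparison applies, since the boxes are only $(L+1)$-dependent, not i.i.d.), and the counting estimate $\Card{\{\overline\Gamma:\Card{\overline\Gamma}=\ell\}}\le K^\ell$ needed to union-bound over all admissible box sequences. As written, the proposal asserts the conclusion of these steps without identifying the mechanism.

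A smaller logical remark: you say the mesoscopic argument yields a ``uniform quenched bound'' which you then integrate against $\nu$. In fact the block argument, through LSS domination and the identical distribution of good events, operates directly under the annealed measure $\P$; the LDP of Theorem~\ref{LDP} gives convergence of the quenched rates to $\Psi_x$ but does not give a $\lambda$-uniform prefactor, so the direction ``uniform quenched $\Rightarrow$ annealed'' is not available through that route.
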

The annealed large deviations inequalities imply the quenched ones: setting
$$A(\lambda)=\sum_{x\in\Zd}\exp(B\|x\|/2)\P_{\lambda}\left(t(x)\le (1-\epsilon)\mu(x)\right),$$
we see that $A(\lambda)$ is integrable with respect to $\nu$, and thus is $\nu$-almost surely finite. So
$$\forall x\in\Zd\quad \P_{\lambda}(t(x)\le (1-\epsilon)\mu(x))\le A(\lambda)\exp(-B/2\|x\|).$$
Unfortunately, we do not have a complete large deviation principle as Theorem~\ref{LDP} for the upper large deviations. However, 
we will see in Section~\ref{bonnevitesse} that when the environment is i.i.d, the exponential order given by these inequalities is optimal.

Asymptotic shape results for growth models are generally proved thanks to the subadditive processes theory initiated by Hammersley and Welsh~\cite{MR0198576}, and especially with Kingman's  subadditive  ergodic theorem~\cite{MR0356192} and its extensions.
Since  Hammersley~\cite{MR0370721}, we know that subadditive properties offer a proper setting to study the large deviation inequalities. See also the survey by Grimmett~\cite{MR814710} and the Saint-Flour course by Kingman~\cite{MR0438477}. However, as noted by Sepp{\"a}l{\"a}inen and Yukich~\cite{MR1843178}, the general theory of large deviations for subadditive processes is patchy. The best known case is first-passage percolation, studied by Grimmett and Kesten in 1984~\cite{grimmett-kesten}. 
This paper introduced some lines of proof for the large deviations of growth processes, that have been reused later, for instance in  the study of the large deviations for the chemical distance in Bernoulli percolation~\cite{GM-large}. 
For more recent results concerning first-passage percolation, see Chow--Zhang~\cite{chow-zhang}, Cranston--Gauthier--Mountford~\cite{MR2521889}, and Théret et al~\cite{MR2464099,MR2343936,MR2610330,RT-IHP,CT-PTRF,CT-AAP,CT-TAM,RT-ESAIM}.

The renormalization techniques used by Grimmett and Kesten are well-known now: static renormalization for ``upper large deviations'' (control of a too slow growth), dynamic renormalization for ``lower large deviations'' (control of a too fast growth). However, the possibility for the contact process to die gives rise to extra difficulties that do not appear in the case of first-passage percolation or even of Bernoulli percolation. To our knowledge, the only growth process with possible extinction for which large deviations inequalities have been established is oriented percolation in dimension 2 (see Durrett~\cite{MR757768}). Note also that Proposition 20.1 in the PhD thesis of Couronné~\cite{Couronne} rules out the possibility of a too fast growth for oriented percolation in dimension $d$.

In Section 2, we construct the model, give the notation and state previous results, mainly from~\cite{GM-contact}. Section 3 is devoted to the proof of the upper large deviation inequalities, Theorem~\ref{theoGDUQ}, while lower large deviations -- Theorems~\ref{LDP} and~\ref{dessouscchouette} -- are proved in Section 4. Finally, the optimality of the exponential decrease given by these results is briefly discussed in Section 5.
\section{Preliminaries}
 
\subsection{Definition of the model}
Let $\lambda_{\min}$ and  $\lambda_{\max}$ be fixed such that
$\lambda_c(\Zd)<\lambda_{\min}\le\lambda_{\max}$, where
$\lambda_c(\Zd)$ is the critical parameter for the survival of the classical contact process on $\Zd$.
In the following, we restrict ourselves to the study of the contact process in random environment with birth rates  $\lambda=(\lambda_e)_{e \in \Ed}$ in $\Lambda=[\lambda_{\min},\lambda_{\max}]^{\Ed}$. An environment is thus a collection $\lambda=(\lambda_e)_{e \in \Ed} \in \Lambda$.

\medskip
Let $\lambda \in \Lambda$ be fixed. The contact process $(\xi_t)_{t\ge 0}$  in the environment $\lambda$ is a homogeneous Markov process taking its values in the set $\mathcal{P}(\Zd)$ of subsets of $\Zd$, that we sometimes identify with $\{0,1\}^{\Zd}$: for $z \in \Zd$ we also use the random variable $\xi_t(z)=\1_{\{z \in \xi_t\}}$.
If $\xi_t(z)=1$, we say that $z$ is occupied or infected, while if $\xi_t(z)=0$, we say that $z$ is empty or healthy. The evolution of the process is as follows:
\begin{itemize}
\item an occupied site becomes empty at rate $1$,
\item an empty site $z$ becomes occupied at rate
$\displaystyle \sum_{\|z-z'\|_1=1} \xi_t(z')\lambda_{\{z,z'\}},$
\end{itemize}
each of  these evolutions being independent from the others. In the following, we denote by $\D$ the set of càdlàg functions from $\R_{+}$ to $\mathcal{P}(\Zd)$: it is the set of trajectories for Markov processes with state space  $\mathcal{P}(\Zd)$.

To define the contact process in the environment $\lambda\in\Lambda$, we use  Harris' construction~\cite{MR0488377}. It allows to make a coupling between contact processes starting from distinct initial configurations by building them from a single collection of Poisson measures on~$\R_+$.

\subsubsection*{Graphical construction}
We endow $\R_+$ with the Borel $\sigma$-algebra $\mathcal B(\R_+)$, and we denote by $M$ the set of locally finite counting measures $m=\sum_{i=0}^{+\infty} \delta_{t_i}$. We endow this set with the $\sigma$-algebra $\mathcal M$ generated by the maps $m\mapsto m(B)$, where $B$ describes the set of Borel sets in $\R_+$.

We then define the measurable space $(\Omega, \mathcal F)$ by setting
$$\Omega=M^{\Ed}\times M^{\Zd} \text{ and } \mathcal F=\mathcal{M}^{\otimes \Ed} \otimes \mathcal{M}^{\otimes \Zd}.$$
On this space, we consider the family $(\P_{\lambda})_{\lambda\in\Lambda}$  of probability measures defined as follows: 
for every $\lambda=(\lambda_e)_{e \in \Ed} \in \Lambda$, 
$$\P_{\lambda}=\left(\bigotimes_{e \in \Ed} \mathcal{P}_{\lambda_{e}}\right) \otimes \mathcal{P}_1^{\otimes\Zd},$$
where, for every $\lambda\in\R_+$, $\mathcal{P}_{\lambda}$ is the law of a Poisson point process on $\R_+$ with intensity $\lambda$. If $\lambda \in \R_+$, we write $\P_\lambda$ (rather than $\P_{(\lambda)_{e \in \Ed}}$) for the law in deterministic environment with constant infection rate $\lambda$.

For every $t\ge 0$, we denote by $\mathcal{F}_t$ the $\sigma$-algebra generated by the maps $\omega\mapsto\omega_e(B)$ and $\omega\mapsto\omega_z(B)$, where $e$ ranges over all edges in  $\Ed$, $z$ ranges over all points in $\Zd$, and $B$  ranges over all Borel sets in $[0,t]$.

We build the contact process in environment $\lambda\in\Lambda$ from this family of Poisson process, as detailed in Harris~\cite{MR0488377} for the classical contact process and in~\cite{GM-contact} for the random environment case. Note especially that the process is attractive
$$(A \subset B)  \Rightarrow (\forall t \ge 0\quad \xi_t^A \subset \xi_t^B),$$
and Fellerian; then it enjoys the strong Markov property.

\subsubsection*{Time translations}
For $t \ge 0$, we define the translation operator $\theta_t$ on a locally finite counting measure $m=\sum_{i=1}^{+\infty} \delta_{t_i}$ on $\R_+$ by setting
$$\theta_t m=\sum_{i=1}^{+\infty} \1_{\{t_i\ge t\}}\delta_{t_i-t}.$$
The translation $\theta_t$ induces an operator on $\Omega$, still denoted by $\theta_t$: for every $\omega \in \Omega$, we set
$$ \theta_t \omega=((\theta_t \omega_e)_{e \in \Ed}, (\theta_t \omega_z)_{z \in \Zd}).$$

\subsubsection*{Spatial translations}
The group $\Zd$ can act on the process and on the environment. The action on the process changes the observer's point of view:
for   $x \in \Zd$, we define the translation operator~$T_x$ by 
$$\forall \omega \in \Omega\quad T_x \omega=(( \omega_{x+e})_{e \in \Ed}, ( \omega_{x+z})_{z \in \Zd}),$$
where $x+e$ the edge $e$ translated by vector $x$.

Besides, we can consider the translated environment $\T{x}{\lambda}$ defined by $(\T{x}{\lambda})_e=\lambda_{x+e}$.
These actions are dual in the sense that for every $\lambda \in \Lambda$, for every  $x \in \Zd$, 
\begin{eqnarray}
\label{translationspatiale}
\forall A\in\mathcal{F}\quad\P_{\lambda}(T_x \omega \in A) & = & \P_{\T{x}{\lambda}}(\omega \in A).
\end{eqnarray}
Consequently, the law of $\xi^x$ under $\P_\lambda$ coincides with the law of $\xi^0$ under $\P_{x.\lambda}$.

\subsubsection*{Essential hitting times and associated translations}

For a set $A \subset \Zd$, we define the lifetime $\tau^A$ of the process starting from $A$ by 
$$\tau^A=\inf\{t\ge0: \; \xi_t^A=\varnothing\}. $$
For $A \subset \Zd$ and $x \in \Zd$,  we also define the first infection time $t^A(x)$ of the site $x$ from the set $A$ by 
$$t^A(x)=\inf\{t\ge 0: \; x \in \xi_t^A\}.$$
If $y\in\Zd$, we write  $t^y(x)$ instead of  $t^{\{y\}}(x)$. Similarly, we simply write $t(x)$ for $t^0(x)$.

In our previous paper~\cite{GM-contact}, we introduced a new quantity $\sigma(x)$:  it is a time when the site $x$ is infected from the origin $0$ and also has an infinite lifetime. This essential hitting time is defined from a family of stopping times as follows: we set $u_0(x)=v_0(x)=0$ and we define recursively two increasing sequences of stopping times $(u_n(x))_{n \ge 0}$ and $(v_n(x))_{n \ge 0}$ with
$u_0(x)=v_0(x)\le u_1(x)\le v_1(x)\le u_2(x)\dots$ as follows:
\begin{itemize}
\item Assume that $v_k(x)$ is defined. We set $u_{k+1}(x)  =\inf\{t\ge v_k(x): \; x \in \xi^0_t \}$. \\
If $v_k(x)<+\infty$, then $u_{k+1}(x)$ is the first time after $v_k(x)$ where site $x$ is once again infected; otherwise, $u_{k+1}(x)=+\infty$.
\item Assume that $u_k(x)$ is defined, with $k \ge 1$. We set $v_k(x)=u_k(x)+\tau^x\circ \theta_{u_k(x)}$.\\
If $u_k(x)<+\infty$, the time $\tau^x\circ \theta_{u_k(x)}$ is the lifetime of the contact process starting from $x$ at time $u_k(x)$; otherwise, $v_k(x)=+\infty$.
\end{itemize}
We then set
\begin{equation}
\label{definitiondeK}
K(x)=\min\{n\ge 0: \; v_{n}(x)=+\infty \text{ or } u_{n+1}(x)=+\infty\}.
\end{equation}
This quantity represents the number of steps before the success of this process: either we stop because we have just found an infinite $v_n(x)$, which corresponds to a time $u_n(x)$ when $x$ is occupied and has infinite progeny, or we stop because we have just found an infinite $u_{n+1}(x)$, which says that after $v_n(x)$, site $x$ is nevermore infected.

We proved that $K(x)$ is almost surely finite, which allows to define the essential hitting time $\sigma(x)$ by setting $\sigma(x)=u_{K(x)}$.
It is of course larger than the hitting time $t(x)$ and can been seen as a regeneration time.

Note however that $\sigma(x)$ is not necessary the first time when $x$ is occupied and has infinite progeny: for instance, such an event can occur between $u_1(x)$ and $v_1(x)$, being ignored by the recursive construction.

At the same time, we define the operator $\tilde \theta_x$ on $\Omega$ by:
\begin{equation*}
\tilde \theta_x = 
\begin{cases} T_{x} \circ \theta_{\sigma(x)} & \text{if $\sigma(x)<+\infty$,}
\\
T_x &\text{otherwise,}
\end{cases}
\end{equation*}
or, more explicitly,
\begin{equation*}
(\tilde \theta_x)(\omega) = 
\begin{cases} T_{x} (\theta_{\sigma(x)(\omega)} \omega) & \text{if $\sigma(x)(\omega)<+\infty$,}
\\
T_x (\omega) &\text{otherwise.}
\end{cases}
\end{equation*}
We will mainly deal with the essential hitting time $\sigma(x)$ that enjoys, unlike $t(x)$, 
some good invariance properties in the survival-conditioned environment.  Moreover, the difference between $\sigma(x)$ and $t(x)$ was controlled in~\cite{GM-contact}; this will allow us to transpose to $t(x)$ the results obtained for $\sigma(x)$.

\subsubsection*{Contact process in the survival-conditioned environment}
For $\lambda \in \Lambda$, we define the probability measure ${\Pbarre}_\lambda$ 
on $(\Omega, \mathcal F)$ by
$$\forall E\in\mathcal{F}\quad {\Pbarre}_\lambda(E)=\P_\lambda(E|\tau^0=+\infty).$$
It is thus the law of the family of Poisson point processes, conditioned to the survival of the contact process starting from $0$.
Let then $\nu$ be a probability measure on the set of environments $\Lambda$.
On the same space $(\Omega, \mathcal F)$, we define the corresponding annealed probabilities $\Pbarre$ and $\P$ by setting
$$\forall E\in\mathcal{F}\quad {\Pbarre}(E)=\int_\Lambda {\Pbarre}_\lambda(E)\ d\nu(\lambda) \quad{ and } \quad {\P}(E)=\int_\Lambda {\P}_\lambda(E)\ d\nu(\lambda).$$

\subsection{Previous results} We recall here the results established in~\cite{GM-contact} for the contact process in random environment.

\begin{prop}[Lemma 8 and Corollary 9 in~\cite{GM-contact}]
 \label{magic}
Let $x,y \in \Zd \backslash \{0\}$, $\lambda\in\Lambda$, $A$ in the $\sigma$-algebra generated by $\sigma(x)$, and $B\in \mathcal F$. Then
$$\forall \lambda \in \Lambda \quad \Pbarre_\lambda(A \cap (\tilde{\theta}_x)^{-1}(B))=\Pbarre_\lambda(A) \Pbarre_{\T{x}{\lambda}}(B).$$
\label{invariancePbarre}
As consequences we have:
\begin{itemize}
\item The probability measure $\Pbarre$ is invariant under the translation $\tilde \theta_x$.
\item Under $\Pbarre_\lambda$, $\sigma(y)\circ\tilde{\theta}_x$ and $\sigma(x)$ are independent. Moreover, the law of $\sigma(y)\circ\tilde{\theta}_x$ under $\Pbarre_\lambda$ is the same as the law of  $\sigma(y)$ under $\Pbarre_{\T{x}{\lambda}}$.
\item  The random variables $(\sigma(x) \circ (\tilde \theta_{x})^j)_{j \ge 0}$ are independent under~$\Pbarre_\lambda$. 
\end{itemize}
\end{prop}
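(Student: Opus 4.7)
The plan is to establish the identity
$$\Pbarre_\lambda(A \cap \tilde\theta_x^{-1}(B)) = \Pbarre_\lambda(A)\,\Pbarre_{\T{x}{\lambda}}(B)$$
by exploiting the regenerative character of $\sigma(x)$: under $\Pbarre_\lambda$, this random time falls in the case $v_{K(x)}(x) = +\infty$, so $x$ is occupied at time $\sigma(x)$ and the process restarted from $x$ at this time survives forever. By the strong Markov property combined with the spatial translation $T_x$, the post-$\sigma(x)$ dynamics are then distributed as a fresh contact process in environment $\T{x}{\lambda}$ starting from $\{0\}$ and conditioned to survive, that is, $\Pbarre_{\T{x}{\lambda}}$.

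\textbf{First}, I would show that, $\Pbarre_\lambda$-almost surely, $\sigma(x) < +\infty$ and $\tau^x \circ \theta_{\sigma(x)} = +\infty$. This amounts to ruling out the ``bad'' branch of the recursion, where $K(x) = n$ is caused by $u_{n+1}(x) = +\infty$ rather than $v_n(x) = +\infty$. A Borel--Cantelli / coupling argument does the job: conditionally on global survival, the infinite progeny of $0$ reinfects $x$ infinitely often along the trials $u_1(x), u_2(x), \ldots$, each of which has a uniform (in $\lambda \in \Lambda$) positive chance of surviving thanks to $\lambda_{\min} > \lambda_c(\Zd)$, so the first trial to survive produces a finite $\sigma(x)$ with the desired property.

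\textbf{Second}, I would decompose on $\{K(x) = n\}$. On this event, $\sigma(x) = u_n(x)$ and $u_n(x)$ is an $(\mathcal F_t)$-stopping time. The event $A$, being $\sigma(\sigma(x))$-measurable, is $\mathcal F_{u_n(x)}$-measurable in restriction to $\{K(x) = n\}$, as are the hypotheses $\{v_k(x) < +\infty \text{ and } u_{k+1}(x) < +\infty\text{ for } k < n\}$ built into $\{K(x) = n\}$. The remaining requirement $\{v_n(x) = +\infty\}$ is $\{\tau^x \circ \theta_{u_n(x)} = +\infty\}$, a function of the shifted randomness $\theta_{u_n(x)} \omega$ only. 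Applying the strong Markov property at $u_n(x)$ (legitimate by Harris' graphical construction and the memoryless property of the Poisson processes), and rewriting ``starting from $x$ in environment $\lambda$'' as ``starting from $0$ in environment $\T{x}{\lambda}$'' via the duality~\eqref{translationspatiale}, the tail factor
$$\P_\lambda\bigl(\{\tau^x \circ \theta_{u_n(x)} = +\infty\} \cap \{T_x\theta_{u_n(x)}\omega \in B\}\bigr)$$
decouples from the $\mathcal F_{u_n(x)}$-measurable part and equals $\P_{\T{x}{\lambda}}(\{\tau^0 = +\infty\} \cap B)$.

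\textbf{Third}, I would sum over $n$: the tail factor is the same for every $n$, so it factors out, and the remaining sum reconstructs $\P_\lambda(A \cap \{\sigma(x) < +\infty\} \cap \{\tau^x \circ \theta_{\sigma(x)} = +\infty\})$, which by the first step equals $\P_\lambda(A \cap \{\tau^0 = +\infty\})$. Dividing by $\P_\lambda(\tau^0 = +\infty)$ produces $\Pbarre_\lambda(A)\,\Pbarre_{\T{x}{\lambda}}(B)$. The main obstacle is not analytic but combinatorial bookkeeping in the second step: one must carefully separate, in $\{K(x) = n\}$, the part that lives before $u_n(x)$ from the tail event $\{\tau^x\circ\theta_{u_n(x)} = +\infty\}$ so that Harris' independence applies cleanly, and check that $A$ indeed lies in $\mathcal F_{u_n(x)}$ on $\{K(x) = n\}$. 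The three consequences then follow mechanically: invariance of $\Pbarre$ by taking $A = \Omega$ and integrating against the stationary $\nu$; the independence and distributional identity for $\sigma(y)\circ\tilde\theta_x$ and $\sigma(x)$ by taking $A = \{\sigma(x) \in \cdot\}$ and $B = \{\sigma(y) \in \cdot\}$; and the full independence of $(\sigma(x) \circ \tilde\theta_x^{\,j})_{j \ge 0}$ under $\Pbarre_\lambda$ by induction on $j$, the environment evolving from $\T{jx}{\lambda}$ to $\T{(j+1)x}{\lambda}$ at each step.
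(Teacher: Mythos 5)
This proposition is imported verbatim from the earlier paper \cite{GM-contact} (Lemma~8 and Corollary~9 there); the present paper states it without proof, so there is no ``paper's own proof'' to compare against. That said, your reconstruction is a faithful rendering of the standard regeneration/strong-Markov argument on which the result rests, and the overall structure --- (i) show $\Pbarre_\lambda$-a.s.\ that $\sigma(x)<\infty$ and $\tau^x\circ\theta_{\sigma(x)}=+\infty$ using the uniform supercriticality $\lambda_{\min}>\lambda_c(\Zd)$, (ii) decompose on $\{K(x)=n\}$, identify $u_n(x)$ as a stopping time with $A\cap\{K(x)=n\}$ in $\mathcal F_{u_n(x)}$, and apply the strong Markov property at $u_n(x)$ together with the spatial translation identity~\eqref{translationspatiale}, (iii) sum over $n$ --- is correct and matches what is done in \cite{GM-contact}.

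One small slip in your third step: after the tail factor $\P_{\T{x}{\lambda}}(\{\tau^0=+\infty\}\cap B)$ has been pulled out, the ``remaining sum'' is $\sum_n \P_\lambda(\{u_n(x)\in C\}\cap\{u_n(x)<\infty\})$, which is \emph{not} equal to $\P_\lambda(A\cap\{\tau^0=+\infty\})$: the latter still carries the tail condition that you have just factored out, so the remaining sum is in fact $\P_\lambda(A\cap\{\tau^0=+\infty\})/\P_\lambda(\tau^x=+\infty)$. The bookkeeping closes because of the identity $\P_\lambda(\tau^x=+\infty)=\P_{\T{x}{\lambda}}(\tau^0=+\infty)$ (a consequence of~\eqref{translationspatiale}): the tail factor equals $\P_{\T{x}{\lambda}}(\tau^0=+\infty)\,\Pbarre_{\T{x}{\lambda}}(B)$, the factor $\P_{\T{x}{\lambda}}(\tau^0=+\infty)$ cancels the denominator $\P_\lambda(\tau^x=+\infty)$ from the remaining sum, and then dividing the whole numerator by $\P_\lambda(\tau^0=+\infty)$ gives $\Pbarre_\lambda(A)\,\Pbarre_{\T{x}{\lambda}}(B)$ as claimed. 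Your derivation of the three corollaries from the main identity is correct: stationarity of $\nu$ for the first, the choice $A=\{\sigma(x)\in\cdot\}$, $B=\{\sigma(y)\in\cdot\}$ for the second, and induction in $j$ with the environment updated to $\T{x}{\lambda}$ at each step for the third.
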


\begin{prop}[Corollaries~20 and 21 in~\cite{GM-contact}]
\label{propmoments}
There exist $A,B,C>0$ and, for every $p\ge 1$, a constant $C_p>0$ such that for every $x\in\Zd$ and every $\lambda\in\Lambda$, 
\begin{eqnarray}\label{moms}
\Ebarre_{\lambda} [\sigma(x)^p ]& \le&  C_p (1+\|x\|)^{p},\\
\label{asigma}
\forall t\ge 0 \quad ( \|x\|\le  t) & \Longrightarrow & \left(\Pbarre_{\lambda}(\sigma(x)> Ct)  \le  A\exp(-Bt^{1/2})\right).
\end{eqnarray}
\end{prop}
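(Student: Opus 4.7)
The plan is to derive both bounds from a single stretched-exponential tail estimate for $\sigma(x)$, from which the moment bound~\eqref{moms} follows by integration by parts. The tail itself is obtained by combining a uniform Bezuidenhout--Grimmett block comparison with the renewal structure built into the essential hitting time $\sigma(x) = u_{K(x)}(x)$.

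First, I would establish an analogous tail for the ordinary hitting time $t(x)$. Since $\lambda_{\min} > \lambda_c(\Zd)$, Harris' graphical construction couples every contact process in environment $\lambda\in\Lambda$ with the classical contact process of rate $\lambda_{\min}$, which is strictly supercritical. Applying the Bezuidenhout--Grimmett renormalization to this dominated process yields a stochastic domination by a supercritical oriented percolation at a renormalized scale, with parameters depending only on $\lambda_{\min}, \lambda_{\max}, d$. On the event $\{\tau^0=+\infty\}$ this gives
$$\|x\| \le t \Longrightarrow \Pbarre_\lambda(t(x) > Ct) \le A\exp(-B\sqrt{t}),$$
where the square-root rate reflects the cost of waiting for a ``seed block'' from which the block comparison can be initiated; conditionally on survival such a seed appears within time $O(\sqrt{t})$ except with probability $\exp(-B\sqrt{t})$.

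Next I would transfer this estimate from $t(x)$ to $\sigma(x)$ via the recursive construction of $(u_k,v_k)$. Set $\rho=\inf_{\mu\in\Lambda}\P_\mu(\tau^0=+\infty)$, which is positive by uniform supercriticality. By the strong Markov property at each finite $u_k(x)$, together with the duality~\eqref{translationspatiale},
$$\P_\lambda\bigl(v_k(x) = +\infty \mid \mathcal{F}_{u_k(x)},\, u_k(x) < +\infty\bigr) = \P_{\T{x}{\lambda}}(\tau^0=+\infty) \ge \rho,$$
so $K(x)$ is stochastically dominated (under $\Pbarre_\lambda$) by a geometric$(\rho)$ variable. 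On $\{v_k(x)<+\infty\}$, the lifetime $v_k(x)-u_k(x) = \tau^x \circ \theta_{u_k(x)}$ conditioned on finiteness has exponential moments uniform in $\lambda$ (extinction tail of a conditioned supercritical process). The subsequent waiting time $u_{k+1}(x)-v_k(x)$ for $x$ to be reinfected is, by the Markov property and attractiveness, stochastically bounded by a hitting time of the same nature as $t(x)$ started from a surviving configuration, so it inherits the stretched-exponential tail of the previous step. Assembling the geometrically-stopped sum $\sigma(x)=\sum_{k=0}^{K(x)-1}(u_{k+1}(x)-u_k(x))$ via a union bound over $\{K(x) > M\log t\}$ and the event that the sum of $\le M\log t$ stretched-exponential increments exceeds $Ct$ yields~\eqref{asigma}. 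The moment bound~\eqref{moms} then follows from
$$\Ebarre_\lambda[\sigma(x)^p] = p\int_0^{+\infty} t^{p-1}\,\Pbarre_\lambda(\sigma(x)>t)\,dt,$$
split at $t=C(1+\|x\|)$: the lower range contributes at most $(C(1+\|x\|))^p$ and the tail integral is bounded by a constant $C_p$ independent of $x$ and $\lambda$.

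The main obstacle is the initial stretched-exponential estimate for $t(x)$: one must run the Bezuidenhout--Grimmett block construction with parameters uniform across the environment family $\Lambda$, and deal with the fact that conditioning on $\{\tau^0=+\infty\}$ destroys the Markov property in an awkward way at the origin, which forces the rate $\exp(-B\sqrt{t})$ rather than the more natural $\exp(-Bt)$ one would obtain starting from an infinite configuration. Once this estimate is in hand, the passage to $\sigma(x)$ and the integration for the moments are essentially bookkeeping on the renewal decomposition.
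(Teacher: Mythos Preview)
This proposition is not proved in the present paper: it is quoted from the authors' earlier article~\cite{GM-contact} (Corollaries~20 and~21 there), so there is no proof here to compare against. I can only assess your sketch on its own merits and against what this paper records about~$t(x)$ and~$\sigma(x)$.

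Your overall architecture---renewal decomposition $\sigma(x)=u_{K(x)}(x)$, geometric tail for $K(x)$, control of the increments $u_{k+1}-u_k$, then integration for the moments---is correct and is indeed how the result is obtained. But you have misplaced the origin of the stretched exponent, and this makes two of your steps wrong as written.

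First, $t(x)$ does \emph{not} merely have a stretched-exponential tail on survival: estimate~\eqref{retouche} in this very paper (also imported from~\cite{GM-contact}) gives the full exponential bound
\[
\P_\lambda\!\left(t^0(x)\ge \tfrac{\|x\|}{c}+s,\ \tau^0=+\infty\right)\le A e^{-Bs},
\]
uniformly in $\lambda\in\Lambda$. So your explanation that ``the square-root rate reflects the cost of waiting for a seed block'' is not where the $\sqrt t$ comes from.

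Second, and relatedly, your threshold $\{K(x)>M\log t\}$ is too small. Since $K(x)$ is dominated by a geometric($\rho$) variable, $\Pbarre_\lambda(K(x)>M\log t)\le (1-\rho)^{M\log t}$ is only polynomial in~$t$, not $e^{-B\sqrt t}$. The correct balance is to split at $K(x)>\sqrt t$: then $\Pbarre_\lambda(K(x)>\sqrt t)\le e^{-c\sqrt t}$, and on $\{K(x)\le\sqrt t\}$ one controls a sum of at most $\sqrt t$ increments, each with a genuinely exponential tail (coming from~\eqref{grosamasfinis} for the pieces $v_k-u_k$ and from~\eqref{retouche} for the reinfection pieces $u_{k+1}-v_k$). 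Taking a union bound over these $\le\sqrt t$ increments, each allotted a share $\sim C\sqrt t$ of the total budget $Ct$, yields another $e^{-B'\sqrt t}$. That is the actual mechanism behind the rate in~\eqref{asigma}. Once~\eqref{asigma} is in place, your derivation of~\eqref{moms} by the layer-cake formula is fine.
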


\begin{prop}[Theorem 2 in~\cite{GM-contact}]
\label{systemeergodique}
For every $x\in\Erg(\nu)$, the measure-preserving dynamical system $(\Omega,\mathcal{F},\Pbarre,\tilde{\theta}_x)$ is ergodic.
\end{prop}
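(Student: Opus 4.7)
The plan is to view $(\Omega,\Pbarre,\tilde{\theta}_x)$ as a measurable extension of $(\Lambda,\nu,T_x)$ and then close the ergodicity argument by a Ces\`{a}ro/decorrelation computation built on Proposition~\ref{invariancePbarre}. Because $\omega_e([0,T])/T$ converges to $\lambda_e$ under $\Pbarre_\lambda$ for every edge $e$, the measures $(\Pbarre_\lambda)_{\lambda\in\Lambda}$ are mutually singular, so a measurable projection $\pi:\Omega\to\Lambda$ recording the environment is well defined $\Pbarre$-a.s., with $\pi_*\Pbarre=\nu$. Since $\tilde{\theta}_x$ is a time translation by $\sigma(x)$ (which preserves Poisson intensities) composed with the spatial shift $T_x$, one has $\pi\circ\tilde{\theta}_x=T_x\circ\pi$, so $\pi$ is a genuine factor map. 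Now let $A\in\mathcal{F}$ be $\tilde{\theta}_x$-invariant. Applying Proposition~\ref{invariancePbarre} with the first factor equal to $\Omega$, the push-forward of $\Pbarre_\lambda$ by $\tilde{\theta}_x$ is $\Pbarre_{x.\lambda}$; hence $f(\lambda):=\Pbarre_\lambda(A)$ satisfies $f(x.\lambda)=\Pbarre_\lambda(\tilde{\theta}_x^{-1}A)=\Pbarre_\lambda(A)=f(\lambda)$ for $\nu$-a.e.\ $\lambda$. The hypothesis $x\in\Erg(\nu)$ forces $f\equiv c$, so $\Pbarre(A)=c$.

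The remaining step is to upgrade $\Pbarre(A)=c$ to $c\in\{0,1\}$. On the one hand, invariance of $A$ together with the $\Pbarre$-invariance of $\tilde{\theta}_x$ yields $\frac{1}{N}\sum_{n=0}^{N-1}\Pbarre(A\cap\tilde{\theta}_x^{-n}A)=c$ for every $N$. On the other hand, I would approximate $A$ in $L^1(\Pbarre)$ by cylinder events $A_k$ depending only on Poisson activity inside a bounded space-time window; for large $n$, the translate $\tilde{\theta}_x^{-n}A_k$ is then determined by the Poisson activity in a window shifted by $nx$ in space and by $\sigma_n:=\sum_{j<n}\sigma(x)\circ\tilde{\theta}_x^j$ in time. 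Establishing the asymptotic decorrelation $\Pbarre_\lambda(A_k\cap\tilde{\theta}_x^{-n}A_k)-\Pbarre_\lambda(A_k)\Pbarre_{nx.\lambda}(A_k)\to 0$ as $n\to\infty$, integrating over $\nu$, and applying the $L^2$ mean ergodic theorem to $g_k(\lambda):=\Pbarre_\lambda(A_k)$ would give Ces\`{a}ro convergence of $\Pbarre(A_k\cap\tilde{\theta}_x^{-n}A_k)$ toward $(\int g_k\,d\nu)^2$, which tends to $c^2$ as $k\to\infty$. Hence $c=c^2$, i.e.\ $c\in\{0,1\}$.

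The main obstacle is the decorrelation estimate. The blocks $(\sigma(x)\circ\tilde{\theta}_x^j)_j$ are genuinely independent under $\Pbarre_\lambda$ by Proposition~\ref{invariancePbarre}, and Poisson measures on disjoint space-time regions are independent under each $\P_\lambda$; what is delicate is the global survival conditioning in $\Pbarre_\lambda$, which a priori couples arbitrarily distant regions through the tail event $\{\tau^0=+\infty\}$. This is exactly the situation the essential hitting time was invented for: after time $\sigma(x)$ the site $x$ has infinite progeny, so the survival event is ``locally realized'' and the post-$\tilde{\theta}_x^n$ Poisson structure can be decoupled from the pre-$\tilde{\theta}_x^n$ one, with an error that is shown to vanish using the exponential control of $\sigma$ in Proposition~\ref{propmoments} together with the finite range of the graphical construction.
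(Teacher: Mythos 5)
This proposition is quoted verbatim from~\cite{GM-contact} (Theorem 2); the present paper does not reprove it, so there is no in-text proof to compare against, and your attempt has to be judged on its own merits.

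Your first half is correct and is the right reduction. Applying Proposition~\ref{magic} with the first factor $\Omega$ gives $(\tilde\theta_x)_*\Pbarre_\lambda=\Pbarre_{x.\lambda}$, so for a $\tilde\theta_x$-invariant $A$ the map $\lambda\mapsto\Pbarre_\lambda(A)$ is $x.$-invariant $\nu$-a.s.\ and hence a.s.\ equal to a constant $c=\Pbarre(A)$ because $x\in\Erg(\nu)$. (Incidentally the factor map $\pi$ is conceptual scaffolding you never actually invoke: the identity you use comes straight from Proposition~\ref{magic}, not from $\pi$ being a factor.) Your Ces\`aro reformulation of the remaining task, $c=c^2$, is also logically sound.

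The genuine gap is the decorrelation estimate, and it is not a technicality to be ``handled by exponential control of $\sigma$ and finite range'' -- it \emph{is} the theorem, and it does not follow from the tools you cite. Two obstacles are underestimated. First, $\sigma(x)$ is not a stopping time: through $v_k(x)=u_k(x)+\tau^x\circ\theta_{u_k(x)}$ and $K(x)$ its definition reads the entire future, so there is no stopped $\sigma$-algebra ``$\mathcal F_{\sigma(nx)}$'' in which to put $A_k$, and ``$A_k$ and $\tilde\theta_x^{-n}A_k$ live in disjoint space-time windows when $\sigma(nx)\gg t_k$'' has no precise meaning -- even on $\{\sigma(nx)>t_k\}$, the value of $\sigma(nx)$, and hence which window $\tilde\theta_x^{-n}A_k$ is read off from, depends on the full $\omega$, including $\omega|_{[0,t_k]}$. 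Second, the only factorization identity available, Proposition~\ref{magic}, takes the pre-$\tilde\theta_x$ event $A$ in $\sigma(\sigma(x))$ only. Iterating it controls the joint law of the scalar sequence $(\sigma(x)\circ\tilde\theta_x^j)_j$ together with the shifted future $\tilde\theta_x^n\omega$, but it says nothing about the conditional law of the discarded Poisson noise on $[0,\sigma(nx))$, which is exactly where your cylinder event $A_k$ lives. Consequently the key display $\Pbarre_\lambda(A_k\cap\tilde\theta_x^{-n}A_k)-\Pbarre_\lambda(A_k)\Pbarre_{nx.\lambda}(A_k)\to 0$ is asserted, not derived. Making it rigorous requires a local-mixing statement for the survival-conditioned measure $\Pbarre_\lambda$ -- essentially that conditioning on $\{\tau^0=+\infty\}$ can be replaced, up to an exponentially small error controlled by bounds of the type~\eqref{grosamasfinis} and~\eqref{uniftau}, by conditioning on a \emph{localized} survival event whose probability is uniformly bounded below -- and that machinery is precisely what the proof of Theorem~2 in~\cite{GM-contact} has to develop. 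Until that is done, the argument does not close.
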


We then proved that $\Pbarre$ almost surely, for every $x \in \Zd$, $\frac{\sigma(nx)}n$ converges to a deterministic real number $\mu(x)$. 
The function $x\mapsto \mu(x)$ can be extended to a norm on $\Rd$, that characterizes the asymptotic shape. Let $A_{\mu}$ be the unit ball for $\mu$.
We define
\begin{eqnarray*}
H_t & = & \{x\in\Zd: \; t(x)\le t\},\\
G_t & = & \{x\in\Zd: \; \sigma(x)\le t\},\\
K'_t & = & \{x\in\Zd: \;\forall s\ge t \quad \xi^0_s(x)=\xi^{\Zd}_s(x)\},
\end{eqnarray*}
and we denote by $\tilde{H}_t,\tilde{G}_t,\tilde{K}'_t$ their "fattened" versions: 
$$\tilde{H}_t=H_t+[0,1]^d, \; \tilde{G}_t=G_t+[0,1]^d \text{ and } \tilde{K}'_t=K'_t+[0,1]^d.$$
We can now state the asymptotic shape result:

\begin{prop}[Theorem 3 in~\cite{GM-contact}]
\label{thFA}
For every $\epsilon>0$, $\Pbarre-a.s.$, for every~$t$ large enough,
\begin{equation}
\label{leqdeforme}
(1-\epsilon)A_{\mu}\subset \frac{\tilde K'_t\cap \tilde G_t}t\subset \frac{\tilde G_t}t\subset\frac{\tilde H_t}t\subset (1+\epsilon)A_{\mu}.
\end{equation}
\end{prop}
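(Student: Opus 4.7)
\emph{Plan.}
The natural object to study is the essential hitting time $\sigma(x)$ rather than $t(x)$, because Proposition~\ref{magic} supplies the invariance and independence under $\tilde\theta_x$ that replace the straightforward translation structure available in first-passage percolation, and the tail bounds of Proposition~\ref{propmoments} are already packaged in terms of $\sigma$. The route is: first establish directional limits $\sigma(nx)/n\to\mu(x)$ via Kingman's subadditive ergodic theorem; second, upgrade this pointwise convergence to a uniform shape result for $\tilde G_t$; third, sandwich $\tilde H_t$ and $\tilde K'_t$ around $\tilde G_t$.

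The subadditive inequality $\sigma(x+y)\le \sigma(x)+\sigma(y)\circ\tilde\theta_x$ follows from attractiveness: at time $\sigma(x)$ the site $x$ is occupied by an infection with infinite progeny, so the subprocess restarted from $(x,\sigma(x))$ is dominated by the full process and its essential hitting time of $x+y$ controls $\sigma(x+y)-\sigma(x)$. Under $\Pbarre$, Proposition~\ref{magic} gives invariance under $\tilde\theta_x$, Proposition~\ref{systemeergodique} gives ergodicity when $x\in\Erg(\nu)$, and~\eqref{moms} gives integrability; Kingman's theorem then furnishes a deterministic limit $\mu(x)$. Subadditivity and $1$-homogeneity extend $\mu$ to a seminorm on the $\Q$-linear span of $\Erg(\nu)$, and density of the cone generated by $\Erg(\nu)$ together with the Lipschitz bound inherited from~\eqref{moms} extend it to a continuous sublinear function on $\R^d$; positivity (so that $\mu$ is a genuine norm) comes from a finite-speed-of-propagation lower bound $\E[\sigma(x)]\ge c\|x\|$.

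To promote this pointwise convergence to a shape theorem for $\tilde G_t$, I would cover $\partial A_\mu$ by $O(t^{d-1})$ lattice directions, apply the subgaussian tail~\eqref{asigma} at each of them, and conclude via Borel--Cantelli together with the monotonicity of $\tilde G_t$ in $t$. This delivers the two middle inclusions $(1-\epsilon)A_\mu\subset \tilde G_t/t\subset \tilde H_t/t$, the second one being immediate from $t(x)\le\sigma(x)$. The outer inclusion $\tilde H_t\subset (1+\epsilon)tA_\mu$ is a separate, easier finite-speed estimate: any infection path from $0$ to $x$ in time $t$ requires at least $\|x\|_1$ birth marks, so $\Pbarre(t(x)\le t)$ decays exponentially in $\|x\|$ when $t\ll \|x\|/\lambda_{\max}$, and a union bound over $x\notin (1+\epsilon)tA_\mu$ closes this side.

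The delicate piece is $(1-\epsilon)A_\mu\subset \tilde K'_t/t$. Here I would argue that for each $x\in G_{(1-\epsilon)t}$, the infection at $x$ at time $\sigma(x)$ has infinite progeny, and hence, by attractiveness and a restart, couples with $\xi^{\Zd}$ on $x+[-R,R]^d$ after an additional local coupling time $\rho(x)$ whose tail is exponentially small uniformly in $\lambda\in\Lambda$. A Borel--Cantelli argument over the $O(t^d)$ sites in the shape, combining~\eqref{asigma} applied to the shifted system via Proposition~\ref{magic} with the exponential tail of $\rho(x)$, then yields the inclusion. The main obstacle is precisely this uniform coupling estimate: it demands a quantitative, environment-robust complete-convergence result of Bezuidenhout--Grimmett type, and it is exactly this difficulty that makes $\sigma(x)$ the natural regeneration object around which the whole argument is organised.
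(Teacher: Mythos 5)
This proposition is not proved in the present paper: it is recalled as Theorem~3 of~\cite{GM-contact}, so there is no in-paper proof to compare against. Judged on its own terms, your sketch has the right global architecture (directional limits for $\sigma$ via Kingman, extension of $\mu$ to a norm, a covering argument plus the tail bound~\eqref{asigma} to upgrade to a uniform shape statement for $\tilde G_t$, then sandwiching $\tilde H_t$ and $\tilde K'_t$ using~\eqref{retouche} and~\eqref{petitsouscouple}), and the ingredients you invoke are exactly those the paper records in Section~2 as imports from~\cite{GM-contact}.

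The one genuine flaw is the asserted subadditivity $\sigma(x+y)\le\sigma(x)+\sigma(y)\circ\tilde\theta_x$, which is false in general, and this is precisely the subtlety the essential hitting time is designed around. At time $s^*:=\sigma(x)+\sigma(y)\circ\tilde\theta_x$ the site $x+y$ is indeed occupied with infinite progeny, but $\sigma(x+y)$ is $u_{K(x+y)}$ from the recursive construction, and that recursion can overshoot $s^*$: if $s^*$ falls inside an interval $[u_k(x+y),v_k(x+y))$ with $v_k(x+y)<\infty$, the recursion jumps to $u_{k+1}(x+y)\ge v_k(x+y)>s^*$ without ever registering the infinite-progeny occupation at $s^*$ (which may descend from an ancestor other than $(x+y,u_k(x+y))$). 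The paper itself warns of this: ``$\sigma(x)$ is not necessarily the first time when $x$ is occupied and has infinite progeny.'' The correct inequality is the mixed one used in~\eqref{sousaddd}, namely $t(x+y)\le\sigma(x)+\sigma(y)\circ\tilde\theta_x$, so the process $(\sigma(nx))_n$ is only \emph{almost} subadditive. Kingman's theorem therefore does not apply off the shelf; one must either use an almost-subadditive ergodic theorem or combine the mixed inequality with a quantitative control of $\sigma(nx)-t(nx)$ (which~\cite{GM-contact} establishes, and which the present paper alludes to with ``the difference between $\sigma(x)$ and $t(x)$ was controlled''). Your treatment of the $\tilde K'_t$ inclusion is appropriately flagged as the hardest point; the uniform coupling tail you want is exactly~\eqref{petitsouscouple}, so that part of the plan is sound given the stated inputs.
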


In order to prove the asymptotic shape theorem, we established 
exponential controls uniform in $\lambda \in \Lambda$. We set
$$B_r^x=\{y \in  \Zd: \; \|y-x\|_{\infty} \le r\},$$
and we write $B_r$ instead of  $B_r^0$. 

\begin{prop}[Proposition 5 in~\cite{GM-contact}]
\label{propuniforme}
There exist $A,B,M,c,\rho>0$ such that for every
$\lambda\in\Lambda$, for every  $y \in \Zd$, for every  $ t\ge0$
\begin{eqnarray}
\P_\lambda(\tau^0=+\infty) & \ge & \rho,
\label{uniftau} \\
\P_\lambda(H^0_t \not\subset B_{Mt} ) & \le & A\exp(-Bt), 
\label{richard} \\
\P_\lambda ( t<\tau^0<+\infty) &\le&  A\exp(-Bt), \label{grosamasfinis} \\
 \P_{\lambda}\left( t^0(y)\ge \frac{\|y\|}c+t,\; \tau^0=+\infty \right) & \le & A\exp(-Bt),
\label{retouche}\\
\P_{\lambda}(0\not\in K'_t, \; \tau^0=+\infty) &\le &A\exp(-B t).
\label{petitsouscouple} 
\end{eqnarray}
\end{prop}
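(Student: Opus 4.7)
The core idea is to reduce everything to the classical contact process via two monotone couplings built from the graphical construction. Since $\lambda_e \in [\lambda_{\min},\lambda_{\max}]$ for every edge $e$, the process $\xi^A$ in environment $\lambda$ simultaneously dominates the classical contact process with constant rate $\lambda_{\min}$ (which is supercritical by assumption since $\lambda_{\min}>\lambda_c(\Zd)$) and is dominated by the one with constant rate $\lambda_{\max}$. Bound \eqref{uniftau} is then immediate: the $\lambda_{\min}$-process survives with some probability $\rho>0$, and $\lambda \mapsto \P_\lambda(\tau^0=+\infty)$ is increasing for the monotone coupling. Bound \eqref{richard} uses only the upper domination: the number of Poisson arrivals on edges traversed in time $t$ is stochastically bounded by a Poisson process with parameter proportional to $\lambda_{\max} t$, so for $M$ large enough a standard Chernoff estimate gives $\P_\lambda(H^0_t\not\subset B_{Mt})\le A\exp(-Bt)$, uniformly in $\lambda$.

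The three remaining inequalities \eqref{grosamasfinis}, \eqref{retouche}, \eqref{petitsouscouple} all rest on a Bezuidenhout--Grimmett block construction. I would fix a large space--time box $B_L \times [0,T]$ and exhibit, for every $\lambda \in \Lambda$, a \emph{block event} $G(\lambda)$ depending only on the Poisson measures inside this box, whose occurrence forces a linear spreading pattern that couples with supercritical oriented percolation on a rescaled lattice. The crucial point is that $G(\lambda)$ involves only finitely many Poisson processes on a bounded region, hence its probability is continuous in $(\lambda_e)_{e \in E}$ for $E$ finite, and the usual Bezuidenhout--Grimmett argument ensures that for \emph{each} supercritical $\lambda$ the block probability tends to $1$ as $L,T \to \infty$. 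Compactness of $[\lambda_{\min},\lambda_{\max}]^E$ together with uniform supercriticality then upgrades this to a single choice of $(L,T)$ for which the block probability exceeds any prescribed $p<1$ \emph{uniformly} in $\lambda \in \Lambda$.

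Once this uniform comparison with supercritical oriented percolation is available, \eqref{grosamasfinis} follows from the standard argument that the probability of a finite but ``long'' cluster decays exponentially in supercritical oriented percolation, \eqref{retouche} from the linear growth of the supercritical cluster with exponential tails on the deviation (the inverse speed $1/c$ being read off from the block size $L/T$), and \eqref{petitsouscouple} from a duality argument: using that the contact process is self-dual, non-coupling of $0$ with $\xi^{\Zd}$ at time $t$ on $\{\tau^0=+\infty\}$ amounts to the existence, backward in time, of an ancestral path that survives from $0$ but stays disconnected from ancestors reaching $\infty$; the block comparison again bounds this probability by $A\exp(-Bt)$.

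The main obstacle is the \emph{uniformity in $\lambda$}: the block construction produces, for fixed $\lambda$, an arbitrarily good comparison with Bernoulli percolation, but to obtain a single pair $(A,B)$ valid for every $\lambda \in \Lambda$ one must verify that the block probability is simultaneously close to $1$ throughout $\Lambda$. This is where the compactness of $\Lambda$ together with the strict inequality $\lambda_{\min}>\lambda_c(\Zd)$ is essential, and all other ingredients (domination, oriented percolation estimates, duality) are then standard.
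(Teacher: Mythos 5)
This proposition is stated as Proposition~5 of~\cite{GM-contact} and is not re-proved in the present paper, so a line-by-line comparison with ``the paper's own proof'' is not possible; one can only judge your outline on its own terms and against the technique the authors deploy for the closely analogous block estimate Lemma~\ref{bonev1}.

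Your treatment of~\eqref{uniftau} and~\eqref{richard} by monotone coupling with the constant-rate processes at $\lambda_{\min}$ and $\lambda_{\max}$ is correct, and the reduction of~\eqref{grosamasfinis}, \eqref{retouche}, \eqref{petitsouscouple} to a Bezuidenhout--Grimmett block comparison with supercritical oriented percolation is the right skeleton. However, the mechanism you invoke for uniformity in $\lambda$ has a genuine gap. Compactness of the finite product over the edges in a box, together with continuity of $\lambda\mapsto\P_\lambda(G_{L,T})$ in those finitely many coordinates, gives you, for each local environment $\lambda_0$, a neighbourhood $U_{\lambda_0}$ and a scale $(L_{\lambda_0},T_{\lambda_0})$ such that the block probability exceeds $p$ on $U_{\lambda_0}$; a finite subcover then yields finitely many \emph{different} scales, and you cannot pass to a single $(L,T)$ without some monotonicity of the block event in the box dimensions. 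The Bezuidenhout--Grimmett block event is not monotone in $(L,T)$ in any obvious sense, so this last step is not free, and as written the compactness argument does not close. The cleaner route --- and, judging from the proof of Lemma~\ref{bonev1}, the one the authors actually use --- is to build uniformity into the block event itself: split its complement into a monotone-decreasing part (spreading/survival/reaching a target), controlled via $\P_{\lambda_{\min}}$, plus a monotone-increasing part (escape from the enlarged space--time box), controlled via $\P_{\lambda_{\max}}$, plus a part depending only on the recovery clocks and hence independent of $\lambda$. One then has $\P_\lambda(G^c)\le \P_{\lambda_{\min}}(\cdot)+\P_{\lambda_{\max}}(\cdot)+\P(\cdot)$ uniformly over $\Lambda$, and a single choice of $(L,T)$ based only on the two extreme constant environments suffices. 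This removes the need for any compactness or continuity step and delivers the uniform estimates directly, which is the spirit in which the present paper handles all its own block estimates.
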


\begin{lemme}
\label{momtprime}
There exist $A,B,C>0$ such that for every $x\in\Zd$ and every $\lambda\in\Lambda$, 
\begin{equation}
\label{momtprimeeq}
\forall t\ge 0 \quad ( \|x\|\le  t)  \Longrightarrow  \left(\Pbarre_{\lambda}(t'(x)> Ct)  \le  A\exp(-Bt^{1/2})\right).
\end{equation}
\end{lemme}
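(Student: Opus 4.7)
The strategy is to reduce $t'(x)$ to $\sigma(x)$ plus a residual coupling time that, after the space-time shift $\tilde{\theta}_x$, has the same law as $t'(0)$ in the translated environment $\T{x}{\lambda}$. The subgaussian rate $t^{1/2}$ will then come directly from \eqref{asigma}, while the coupling part will be exponentially small via \eqref{petitsouscouple} and \eqref{uniftau}.

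The crucial pathwise step is the inequality
\[
t'(x)\ \le\ \sigma(x)+t'(0)\circ\tilde{\theta}_x
\]
on $\{\sigma(x)<+\infty\}$. This is a sandwich argument based on the attractiveness of the graphical construction. Because $x\in\xi^0_{\sigma(x)}$, attractiveness yields $\xi^{x,\sigma(x)}_{\sigma(x)+s}\subset\xi^0_{\sigma(x)+s}$ for every $s\ge 0$; symmetrically, $\xi^{\Zd}_{\sigma(x)}\subset\Zd$ gives $\xi^{\Zd}_{\sigma(x)+s}\subset\xi^{\Zd,\sigma(x)}_{\sigma(x)+s}$; and $\xi^0\subset\xi^{\Zd}$ always. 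Evaluating at the site $x$ produces the chain
\[
\xi^{x,\sigma(x)}_{\sigma(x)+s}(x)\le\xi^0_{\sigma(x)+s}(x)\le\xi^{\Zd}_{\sigma(x)+s}(x)\le\xi^{\Zd,\sigma(x)}_{\sigma(x)+s}(x),
\]
so that if the two extremes coincide at $x$ for all $s\ge T_\ast$, so do the middle two, and thus $t'(x)\le\sigma(x)+T_\ast$. Unwrapping the definition of $\tilde{\theta}_x$ shows that the smallest such $T_\ast$ is exactly $t'(0)\circ\tilde{\theta}_x$.

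The remaining work is assembly. Proposition~\ref{magic}, applied with the trivial event $A=\Omega$, gives
\[
\Pbarre_\lambda\bigl(t'(0)\circ\tilde{\theta}_x>s\bigr)=\Pbarre_{\T{x}{\lambda}}\bigl(t'(0)>s\bigr)=\Pbarre_{\T{x}{\lambda}}(0\notin K'_s),
\]
which by \eqref{petitsouscouple} and \eqref{uniftau} is at most $(A/\rho)\exp(-Bs)$, uniformly in $\T{x}{\lambda}\in\Lambda$. Splitting
\[
\{t'(x)>Ct\}\subset\{\sigma(x)>Ct/2\}\cup\{t'(0)\circ\tilde{\theta}_x>Ct/2\},
\]
and choosing $C$ large enough for \eqref{asigma} to bound the first term by $A_1\exp(-B_1 t^{1/2})$ under the hypothesis $\|x\|\le t$, we see that the second term is dominated by a full exponential and is therefore absorbed. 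The case $x=0$ is immediate from \eqref{petitsouscouple} and \eqref{uniftau}. The real content of the proof is the sandwich inequality; everything else is a packaging of the quenched-uniform bounds in Propositions~\ref{propmoments} and~\ref{propuniforme}.
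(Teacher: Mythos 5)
Your proof is correct and follows essentially the same route as the paper: the sandwich inequality $t'(x)\le\sigma(x)+t'(0)\circ\tilde\theta_x$ is exactly the inclusion $x+(K'_s)\circ\tilde\theta_x\subset K'_{\sigma(x)+s}$ used in the paper's display~\eqref{demai}, and both arguments then invoke Proposition~\ref{magic}, the uniform bounds~\eqref{uniftau}, \eqref{petitsouscouple}, and the control~\eqref{asigma} on $\sigma(x)$. You simply make the attractiveness argument behind that inclusion more explicit.
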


\begin{proof}
For every $\lambda \in \Lambda$, for every $ x \in \Zd$,
\begin{eqnarray}
\Pbarre_{\lambda}(t'(x)>\sigma(x)+s) & = & \Pbarre_{\lambda}(x \not\in K'_{\sigma(x)+s}\cap G_{\sigma(x)+s})\nonumber \\
 & = & \Pbarre_{\lambda}(x \not\in K'_{\sigma(x)+s})\nonumber \\
& \le &  \Pbarre_{\lambda}( x \not\in x+(K'_s) \circ \tilde{\theta}_x)= \Pbarre_{x.\lambda}(0 \not\in K'_s\nonumber)\\
& \le & A \exp(-Bs),\label{demai}
\end{eqnarray}
with~(\ref{uniftau}) and~(\ref{petitsouscouple}).
With~(\ref{asigma}), this estimate gives the announced result. 
\end{proof}

\subsection{An abstract restart procedure}
We formalize here the restart procedure for Markov chains.
Let $E$ be the state space where our Markov chains $(X^x_n)_{n\ge 0}$ evolve,  $x \in E$ being the starting point of the chain.
We  suppose that we have on our disposal a set  $\tilde{\Omega}$, an update function $f:E\times \tilde{\Omega}\to E$, and a probability measure~$\nu$ on $\tilde{\Omega}$ such that on the probability space 
$(\Omega, \mathcal{F}, \P)=(\tilde{\Omega}^{\N^*},\bor[\tilde{\Omega}^{\N^*}],\nu^{\otimes\N^*})$, endowed with the natural filtering $(\mathcal{F}_n)_{n\ge 0}$ given by $\mathcal{F}_n=\sigma(\omega\mapsto \omega_k: \;k\le n)$, the chains $(X^x_n)_{n\ge 0}$ starting from the different states enjoy the following representation: 
\begin{eqnarray*}
\begin{cases}
X^x_0(\omega)=x \\
X^x_{n+1}(\omega)=f(X^x_n(\omega),\omega_{n+1}).	
	\end{cases}
\end{eqnarray*}
As usual, we define $\theta:\Omega\to\Omega$ which maps $\omega=(\omega_n)_{n\ge 1}$ to $\theta\omega=(\omega_{n+1})_{n\ge 1}$.
We assume that for each $x\in E$, we have defined a $(\mathcal{F}_n)_{n\ge 0}$-adapted stopping time~$T^x$, a $\mathcal{F}_{T^x}$-measurable function $G^x$ and a $\mathcal{F}$-measurable  function $F^x$.
Now, we are interested in the following quantities:
\begin{eqnarray*}
T_0^x=0 \text{ and } T^x_{k+1} & = & 
	\begin{cases}
	+\infty & \text{if }T^x_{k}=+\infty\\
	T_k^x+T^{x_k}(\theta_{T_k^{x}}) & \text{with $x_k=X^x_{\theta_{T_k^x}}$ otherwise;}
	\end{cases} \\
K^x & = & \inf\{k\ge 0:\;T_{k+1}^x=+\infty\}; \\
M^x & = & \sum_{k=0}^{K^x-1}  G^{x_k}(\theta_{T_k^x})+F^{X^{x_K}}(\theta_{T^x_{K}}).
\end{eqnarray*}
We wish to control the exponential moments of the $M^x$'s with the help of
exponential bounds for  $G^x$ and $F^x$.
In numerous applications to directed percolation or to the contact process, $T^x$ is the extinction time of the process (or of some embedded process) starting from the smallest point (in lexicographic order) in the configuration $x$.

\begin{lemme}[Lemma 4.1 in~\cite{GM-dop}]
\label{restartabstrait}
We suppose that there exist real numbers $A>0$, $c<1$, $p>0$, $\beta>0$ such that the real-valued functions $(G^x)_{x\in E},(F^x)_{x\in E}$  defined above satisfy $$\forall x\in E\quad  \left\lbrace
\begin{array}{l}
\mathbf{G}(x)=\E [\exp(\beta G^x)\1_{\{T^x<+\infty\}}]\le  c;\\
\mathbf{F}(x)=\E [\1_{\{T^x=+\infty\}} \exp(\beta F^x)]\le A;\\
\mathbf{T}(x)=\P (T^x=+\infty)\ge p.
\end{array}
\right.
$$
Then, for each $x \in E$, $K^x$ is $\P$-almost surely finite and 
$$ \E[ \exp(\beta M^x)]\le \frac{A}{1-c} <+\infty.$$
\end{lemme}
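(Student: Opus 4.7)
The plan is to decompose $\E[\exp(\beta M^x)]$ according to the value of $K^x$ and then apply the strong Markov property iteratively at each restart time $T^x_k$ to collapse the nested expectations into a geometric series. On the event $\{K^x=n\}$, the first $n$ restart attempts have $T^{x_k}\circ\theta_{T^x_k}<+\infty$ for $0\le k\le n-1$ while the $n$-th one is infinite, so
$$M^x\1_{\{K^x=n\}}=\Bigl(\sum_{k=0}^{n-1}G^{x_k}(\theta_{T^x_k})+F^{x_n}(\theta_{T^x_n})\Bigr)\1_{\{K^x=n\}}.$$
Summing over $n$, I would write
$$\E\bigl[e^{\beta M^x}\bigr]=\sum_{n\ge 0}\E\!\left[\prod_{k=0}^{n-1}e^{\beta G^{x_k}(\theta_{T^x_k})}\1_{\{T^{x_k}\circ\theta_{T^x_k}<+\infty\}}\cdot e^{\beta F^{x_n}(\theta_{T^x_n})}\1_{\{T^{x_n}\circ\theta_{T^x_n}=+\infty\}}\right].$$

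Next I would peel off the factors from the inside out. Since the update sequence $(\omega_j)_{j\ge 1}$ is i.i.d.\ under $\P$, the shift $\theta_{T^x_n}\omega$ is independent of $\mathcal{F}_{T^x_n}$ with the same law as $\omega$, and the state $x_n=X^x_{T^x_n}$ is $\mathcal{F}_{T^x_n}$-measurable. Conditioning on $\mathcal{F}_{T^x_n}$ therefore produces a factor $\mathbf{F}(x_n)\le A$ by hypothesis. Conditioning successively on $\mathcal{F}_{T^x_{n-1}},\dots,\mathcal{F}_{T^x_0}$ then produces $n$ factors of the form $\mathbf{G}(x_k)\le c$, since $x_k$ is $\mathcal{F}_{T^x_k}$-measurable and the same shift-independence applies at each level. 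The $n$-th summand is thus bounded by $Ac^n$, and summing the geometric series gives $\E[e^{\beta M^x}]\le A/(1-c)$, which is the announced bound.

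For the almost sure finiteness of $K^x$, the hypothesis $\mathbf{T}(x)\ge p$ implies that conditionally on $\mathcal{F}_{T^x_k}$, the probability that the $(k+1)$-st attempt succeeds (i.e.\ $T^{x_k}\circ\theta_{T^x_k}=+\infty$) is at least $p>0$; hence $K^x$ is stochastically dominated by a geometric random variable of parameter $p$, and is in particular almost surely finite. The only real obstacle is the bookkeeping: verifying that $x_k$ is indeed $\mathcal{F}_{T^x_k}$-measurable and that the strong Markov property can be invoked at each stopping time $T^x_k$ in a way that the $\mathcal{F}_{T^x_k}$-measurable factors slide out of the conditional expectation cleanly. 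Once this measurability setup is in place, the result follows either by the straightforward telescoping above or by induction on $n$, showing
$$\E\bigl[e^{\beta M^x}\1_{\{K^x\ge n\}}\bigr]\le \frac{A\, c^n}{1-c}$$
and letting $n\to+\infty$.
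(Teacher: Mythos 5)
Your proof is correct. Note that the paper does not actually reproduce a proof of this statement---it is cited verbatim as Lemma~4.1 of the companion reference \cite{GM-dop}---so there is no in-paper argument to compare against; but your reasoning (decompose $\E[e^{\beta M^x}]$ over $\{K^x=n\}$, use the strong Markov property of the i.i.d.\ update sequence at the stopping times $T^x_k$ to peel off one factor $\mathbf{F}(x_n)\le A$ and then $n$ factors $\mathbf{G}(x_k)\le c$, and sum the geometric series, with the a.s.\ finiteness of $K^x$ from the uniform lower bound $\mathbf{T}(x)\ge p$) is the standard argument for restart lemmas of this type and is surely the intended one.
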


\subsection{Oriented percolation}

We work, for $d \ge1$, on the following graph: 
\begin{itemize}
\item The set of sites is $\Vddo=\{(z,n)\in \Zd \times \N\}$.
\item We put an oriented edge from $(z_1,n_1)$ to $(z_2,n_2)$ if and only if $n_2=n_1+1$ and $\|z_2-z_1\|_1\le1$; the set of these edges is denoted by $\Eddo$. 
\end{itemize}
Define $\Edo$ in the following way: in $\Edo$, there is an oriented edge between two points $z_1$ and $z_2$ in $\Zd$ if and only if  $\|z_1-z_2\|_1\le 1$.
The oriented edge in $\Eddo$  from $(z_1,n_1)$ to $(z_2,n_2)$ can be identified with the couple $((z_1,z_2),n_2)\in\Edo\times\N^*$. Thus, we identify $\Eddo$ and $\Edo\times\N^*$.

We consider $\Omega=\{0,1\}^{\Eddo}$ endowed with its Borel $\sigma$-algebra: the edges $e$ such that $\omega_e=1$ are said to be open, the other ones are closed. For $v, w$ in $\Zd\times\N$, we denote by $v \to w$ the existence of an oriented path from $v$ to $w$ composed of open edges. We denote by $\pcdir(d+1)$ the critical parameter for the Bernoulli oriented percolation on this graph (\ie all edges are independently open with probability~$p$).
We set, for $n \in \N$ and $(x,0)\in \Vddo$,
\begin{eqnarray*}
\bar{\xi}^x_n & = & \{y \in \Z:  \; (x,0)\to(y,n)\}, \\
\bar{\tau}^x & = & \max\{n \in \N:\; \bar{\xi}^x_n \neq \varnothing\}.
\end{eqnarray*}

We recall results from~\cite{GM-dop} for a class $\mathcal{C}_d(M,q)$ of dependent oriented percolation models on this graph. The parameter $M$ controls the range of the dependence while the parameter $q$ controls the probability for an edge to be open.
\begin{defi}[Class  $\mathcal{C}_d(M,q)$]
Let $d\ge1$ be fixed. Let $M$ be a positive integer and $q\in (0,1)$.

Let $(\Omega,\mathcal{F},\P)$ be a probability space endowed with a filtration  $(\mathcal{G}_n)_{n\ge 0}$. We assume that, on this probability space, a random field $(W^n_e)_{e\in\Edo,n\ge 1}$ taking its values in $\{0,1\}$ is defined. This field gives the states -- open or closed -- of the edges in $\Eddo$. 
We say that the law of the field
$(W^n_e)_{e\in\Edo,n \ge 1}$ is in $\mathcal{C}_d(M,q)$ if it satisfies the two following conditions. 
\begin{itemize}
\item  $\forall n\ge 1,\forall e \in \Edo\quad W^n_e\in\mathcal{G}_n$;
\item $\forall n \ge 0,\forall e \in \Edo\quad \P[W^{n+1}_e=1|\mathcal{G}_n\vee \sigma(W^{n+1}_f, \; d(e,f)\ge M)]\ge q$,
\end{itemize}
where $\sigma(W^{n+1}_f, \; d(e,f)\ge M)$ is the $\sigma$-field generated by the random variables $W^{n+1}_f$, with $d(e,f)\ge M$.
\end{defi}
Note that if $0\le q\le q'\le 1$, we have $\mathcal{C}_d(M,q')\subset \mathcal{C}_d(M,q)$.

We can control the probability of survival and also the lifetime  for these dependent oriented percolations.

\begin{prop}[Corollary 3.1 in~\cite{GM-dop}]
\label{petitmomentexpo}
Let $\epsilon>0$ and $M>1$. There exist $\beta>0$ and $q<1$ such that for each $\chi\in\mathcal{C}_d(M,q)$, 
$$\forall x \in \Zd \quad \E_{\chi}[\1_{\{\bar{\tau}^x<+\infty\}}\exp(\beta\bar{\tau}^x)]\le \epsilon \quad  \text{ and } \quad \chi(\bar{\tau}^x=+\infty) \ge 1-\epsilon.$$
\end{prop}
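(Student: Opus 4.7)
The plan is to reduce the dependent model to a supercritical Bernoulli oriented percolation by stochastic domination, and then to use a Peierls-type contour estimate at the Bernoulli level to extract the claimed exponential moment.

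First I would apply the Liggett--Schonmann--Stacey comparison. The hypothesis $\P[W^{n+1}_e=1 \mid \mathcal G_n \vee \sigma(W^{n+1}_f, \, d(e,f)\ge M)] \ge q$ means that, level by level, each edge variable dominates a Bernoulli$(q)$ variable given everything outside a ball of radius $M$. By enumerating the edges of generation $n+1$ in a lexicographic order and coupling sequentially, one obtains that $(W^n_e)_{n,e}$ stochastically dominates a product Bernoulli field on $\Eddo$ with parameter $p = p(q,M)$ satisfying $p(q,M) \to 1$ as $q \to 1$. The inflated dependence radius is precisely why the statement is formulated with a general $M>1$. From this domination, $\bar\xi^x$ in the dependent model dominates $\bar\xi^x_{\mathrm{Ber}}$ under Bernoulli oriented percolation at level~$p$, so $\bar\tau^x \ge \bar\tau^x_{\mathrm{Ber}}$ pathwise on the survival event, and in fact the entire clusters can be coupled so that extinction of the dependent cluster forces extinction of the Bernoulli one above it.

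It then suffices to prove the claim for Bernoulli oriented percolation with $p$ close to $1$. For the survival part, the comparison with independent site percolation on the forward tree of $(0,0)$ gives $\P_p(\bar\tau^0 = +\infty) \ge 1 - \eta(p)$ with $\eta(p) \to 0$. For the exponential moment, a standard contour argument applies: on $\{\bar\tau^x = n\}$, the set $\bar\xi^x_n$ is empty, so the forward cone of $(x,0)$ must be cut off at generation $n$ by a surface $S$ of closed edges separating $(x,0)$ from level $n+1$ inside $\Vddo$. Such a surface contains at least $cn$ closed edges for some $c=c(d)>0$, and the number of candidate surfaces of size $k$ grows at most like $C_d^k$. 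A union bound yields
$$\P_p(\bar\tau^x = n) \le \sum_{k \ge cn} C_d^k (1-p)^k \le A\,\alpha(p)^n,$$
with $\alpha(p) \to 0$ as $p \to 1$, and the bound is uniform in $x$ by translation invariance of the Bernoulli field.

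With both ingredients in hand, I would fix $\beta > 0$ arbitrarily (it can be taken to be any fixed positive number), then choose $q$ close enough to $1$ that the induced $p$ satisfies simultaneously $\eta(p) \le \epsilon$ and $A \sum_{n \ge 0} \alpha(p)^n e^{\beta n} \le \epsilon$. The only real obstacle is executing the Liggett--Schonmann--Stacey coupling compatibly with the filtration $(\mathcal G_n)$: one must check that the sequential enumeration within a single generation preserves the conditional lower bound after conditioning on edges already revealed at that generation, which forces the replacement of $M$ by a slightly larger radius and is the technical core of the argument. The contour estimate itself is then routine.
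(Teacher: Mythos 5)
This proposition is quoted from the companion paper \cite{GM-dop} (Corollary 3.1 there) and is not re-proved in the present paper, so there is no in-paper argument to compare your attempt against. Your overall plan --- Liggett--Schonmann--Stacey domination followed by a Peierls estimate --- is the natural route and plausibly what \cite{GM-dop} carries out. There is, however, a genuine gap in your reduction step. From the coupling $W\ge B$ you correctly deduce $\{\bar{\tau}^x<\infty\}\subset\{\bar{\tau}^x_{\mathrm{Ber}}<\infty\}$ and $\bar{\tau}^x\ge\bar{\tau}^x_{\mathrm{Ber}}$, and then assert that it ``suffices to prove the claim for Bernoulli oriented percolation''. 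That reduction works for the survival probability, since $\{\bar{\tau}^x=\infty\}$ is an increasing event; it does not work for the exponential moment. The functional $\1_{\{\bar{\tau}^x<\infty\}}\exp(\beta\bar{\tau}^x)$ is not monotone in the edge configuration: opening an extra edge may increase a finite $\bar{\tau}^x$ (raising the value) or may push $\bar{\tau}^x$ to $+\infty$ (dropping the value to $0$). And the pointwise inequality $\bar{\tau}^x\ge\bar{\tau}^x_{\mathrm{Ber}}$ on the coupled pair points the wrong way for an upper bound in any case. So stochastic domination does not give $\E_{\chi}[\1_{\{\bar{\tau}^x<\infty\}}\exp(\beta\bar{\tau}^x)]\le\E[\1_{\{\bar{\tau}^x_{\mathrm{Ber}}<\infty\}}\exp(\beta\bar{\tau}^x_{\mathrm{Ber}})]$, and the exponential-moment half of the statement does not transfer as written.

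The fix is to run your Peierls count directly under $\chi$ rather than reduce the whole inequality to the Bernoulli model. For each fixed contour $S$, the event that all edges of $S$ are closed is decreasing; domination applies to it (or, even more directly, split $S$ into $O(M^{d})$ families of pairwise $M$-distant edges and iterate the conditional bound defining $\mathcal{C}_d(M,q)$), giving $\chi(\text{all of }S\text{ closed})\le(1-q')^{|S|}$ with $q'\to 1$ as $q\to 1$. Combined with your combinatorics (a cluster of lifetime $n$ forces a $*$-connected set of at least $cn$ outward closed edges, and there are at most $C_d^{k}$ such sets of size $k$), this yields $\chi(\bar{\tau}^x=n)\le A\alpha^{n}$ with $\alpha\to 0$, from which both assertions of the proposition follow at once --- the survival bound now also comes for free as $\chi(\bar{\tau}^x<\infty)\le\sum_{n} A\alpha^{n}$. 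The two points you flag as technical (making the LSS coupling compatible with the filtration $(\mathcal{G}_n)_n$, and verifying the $cn$ lower bound and connectivity of the outward closed boundary in $d\ge 2$) are real but of the manageable kind you anticipated.
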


A point $(y,k) \in \Zd \times \N$ such that $(x,0)\to(y,k)\to\infty$ is called an immortal descendant of $x$. We will need estimates on the density of immortal descendants of $x$ above some given point $y$ in oriented dependent percolation.
So we define
\begin{eqnarray*}
\bar{G}(x,y)&=& \{k\in\N\quad (x,0)\to(y,k)\to\infty\}, \\
\bar{\gamma}(\theta,x,y) & = & \inf\{n\in\N: \quad \forall k\ge n \quad \Card{\{0,\dots,k\}\cap \bar{G}(x,y)}\ge\theta k\}.
\end{eqnarray*} 

\begin{prop}[Corollary 3.3 in~\cite{GM-dop}]
\label{lineairegamma}
Let 
$M>1$. There exist $q_0<1$ and positive constants $A,B,\theta,\alpha$  such that for each  $\chi\in\mathcal{C}_d(M,q_0)$, we have
$$\forall x,y\in\Zd\quad\forall n\ge 0\quad \chi(+\infty>\gamma(\theta,x,y)> \alpha \|x-y\|_1+n)\le Ae^{-Bn}.$$
\end{prop}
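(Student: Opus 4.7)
The plan is to decompose $\bar\gamma(\theta,x,y)$ into a hitting phase and a density phase. Let $k_0 = \inf\{k : (x,0) \to (y,k) \to \infty\}$ be the first time an immortal descendant of $(x,0)$ sits above $y$. If I can show (a) $k_0 \le \alpha\|x-y\|_1 + n/2$ with probability at least $1 - A_1 e^{-B_1 n}$ on $\{\bar\tau^x = +\infty\}$, and (b) the density of $\bar G(x,y)$ in $[k_0, k_0+N]$ is at least $2\theta$ with probability at least $1 - A_2 e^{-B_2 N}$, then, at the cost of slightly shrinking $\theta$ to absorb the initial dead window, $\bar\gamma(\theta,x,y) \le \alpha\|x-y\|_1 + n$ with the required exponential probability. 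Both phases will draw on the restart procedure of Lemma~\ref{restartabstrait} fed with inputs supplied by Proposition~\ref{petitmomentexpo}.

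For (a) I would apply Lemma~\ref{restartabstrait} to a chain of successive attempts to grow an oriented open path from the current live front to the vertical column above $y$. Choose $q_0$ close enough to $1$ so that Proposition~\ref{petitmomentexpo} applies with a small $\epsilon$: this provides simultaneously the exponential-moment bound $\mathbf{G}(x)\le c<1$ on lifetimes of failed subcritical attempts, the upper bound $\mathbf{F}(x)\le A$ for successful ones, and the survival probability $\mathbf{T}(x)\ge 1-\epsilon$. Because edges in $\Eddo$ satisfy $\|z_1-z_2\|_1\le 1$, a successful attempt can steer in a straight line and reaches the column above $y$ in at most $\alpha\|x-y\|_1$ time steps. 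Lemma~\ref{restartabstrait} then controls the exponential moment of $k_0-\alpha\|x-y\|_1$, and Markov's inequality yields the desired tail.

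For (b) I would shift the origin to $(y,k_0)$, using that monotonicity in the initial configuration reduces the event $\{k\in \bar G(x,y)\}$ to $\{(y,k)\to\infty\}$ in the time-translated field, and then slice time into blocks of length $2M$ so that the $\mathcal{C}_d(M,q_0)$ conditional-independence structure yields genuine decoupling between consecutive block-level trials. A second invocation of Lemma~\ref{restartabstrait}, with $G$ equal to the length of a failed run of blocks producing no new immortal descendant above $y$ and $F$ bounded on success, converts Proposition~\ref{petitmomentexpo} into the announced linear lower bound on the number of hits of $\bar G(x,y)$ in $[k_0,k_0+N]$. The main obstacle is precisely this density step: the raw events $\{(y,k)\to\infty\}$ are not independent in $k$, and turning the range-$M$ conditional decoupling into a concentration inequality with exponential tail is what the block-plus-restart combination is designed to accomplish.
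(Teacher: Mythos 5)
This proposition is stated in the paper as a citation (Corollary 3.3 in~\cite{GM-dop}) with no proof supplied here, so there is no in-paper argument to compare against; the following assesses your outline on its own.

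Your hitting/density decomposition is a sensible skeleton, but both phases secretly require an input that Proposition~\ref{petitmomentexpo} does not give, namely a \emph{directional} spread estimate with exponential tails. In step (a), the claim that ``a successful attempt can steer in a straight line and reach the column above $y$ in at most $\alpha\|x-y\|_1$ time steps'' conflates the deterministic geometry of $\Eddo$ (such a lattice path exists) with the randomness of the field: the event that a \emph{prescribed} path of length $\|x-y\|_1$ is open has probability of order $q_0^{\|x-y\|_1}$, not of order one, and conditioning on $\{\bar{\tau}^x=+\infty\}$ does not let you select it. What is actually needed is a quantitative shape theorem for the surviving cluster of the dependent oriented percolation, e.g.\ that the wet region at time $n$ contains every site within $L^1$-distance $(1-\delta)n$ of $x$ with probability $1-Ae^{-Bn}$. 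Nothing of the kind follows from the two outputs of Proposition~\ref{petitmomentexpo}, which only bound lifetimes and survival probabilities; Lemma~\ref{restartabstrait} can convert a per-attempt exponential estimate into an overall one, but it cannot manufacture the missing spread estimate that would make each successful attempt reach the column above $y$ in linear time.

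Step (b) has the same hidden dependency. Shifting the time origin to $(y,k_0)$ does not reduce $\{k\in\bar{G}(x,y)\}$ to $\{(y,k)\to\infty\}$: membership of $k$ in $\bar{G}(x,y)$ additionally requires $(x,0)\to(y,k)$, i.e.\ that $(y,k)$ lie in the wet region of $(x,0)$ --- again a directional, shape-type condition --- and $(y,k_0)\to\infty$ certainly does not imply $(y,k_0)\to(y,k)$ for later $k$. The block decoupling plus restart combination you propose could well convert a per-block success estimate into a density estimate with exponential deviations, but it cannot supply the per-block estimate itself, which is exactly the spread ingredient your outline omits. In short, a linear-speed-with-exponential-deviations result for the LSS-dominating highly supercritical oriented percolation is the real engine behind Proposition~\ref{lineairegamma}, and it is not derivable from Proposition~\ref{petitmomentexpo} and Lemma~\ref{restartabstrait} alone.
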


\section{Quenched upper large deviations}

The aim is now to prove the quenched upper large deviations of Theorem~\ref{theoGDUQ}.
In order to exploit the subadditivity, we show that $\sigma(x)$ admits exponential moments uniformly in $\lambda \in \Lambda$:
\begin{theorem}
\label{theomomexpsigma}
There exist positive constants $\gamma_1,\beta_1$ such that 
\begin{equation}
\label{momexpsigma}
\forall x \in \Zd \quad \forall \lambda \in \Lambda \quad   
\Ebarre_{\lambda}(e^{\gamma_1 \sigma(x)})  \le  e^{\beta_1\|x\|_{1}}.
\end{equation}
\end{theorem}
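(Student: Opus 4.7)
The plan is to write $\sigma(x) = t(x) + (\sigma(x) - t(x))$, bound each piece's exponential moment separately, and combine via Cauchy--Schwarz. The first piece contributes the $e^{\beta_1 \|x\|_1}$ growth, while the second piece, thanks to the regeneration structure built into the recursive definition of $\sigma$, will have a uniformly \emph{bounded} exponential moment (a constant, not depending on $\|x\|_1$) that we extract via the restart lemma (Lemma~\ref{restartabstrait}). Note that the existing bound~(\ref{asigma}) yields only a $\exp(-B\sqrt{t})$ tail, which is not integrable against $e^{\gamma t}$ for any $\gamma>0$, so a genuine improvement is needed and the two-step split above is designed to bypass~(\ref{asigma}) entirely.

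For the first piece, combining~(\ref{retouche}) with the lower bound~(\ref{uniftau}) on survival gives $\Pbarre_\lambda(t(x) > \|x\|_1/c + t) \le (A/\rho)\exp(-Bt)$, uniformly in $\lambda \in \Lambda$ and $x \in \Zd$, and integrating yields $\Ebarre_\lambda[\exp(\gamma t(x))] \le C_1 \exp((\gamma/c)\|x\|_1)$ for every $\gamma \in (0, B)$. For the excess $\sigma(x) - t(x) = \sum_{k=1}^{K(x)-1} [\tau^x\circ\theta_{u_k} + (u_{k+1}-v_k)]$, each attempt of the recursive construction succeeds with probability at least $\rho$ by~(\ref{uniftau}) (so $K(x)-1$ is geometrically bounded), and on failure the sub-process lifetime $\tau^x\circ\theta_{u_k}$ has uniform exponential tails by~(\ref{grosamasfinis}). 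The inter-attempt waiting time $u_{k+1} - v_k$ is the hitting time of $x$ by the contact process restarted at the stopping time $v_k$ from the surviving configuration $\xi^0_{v_k}$; one controls it by identifying a source $y \in \xi^0_{v_k}$ with $\tau^y = +\infty$ (which exists thanks to the conditioning $\tau^0 = +\infty$) and applying~(\ref{retouche}) to this source, with~(\ref{richard}) bounding how far $y$ can lie from $x$. Framing the sequence of attempts via Lemma~\ref{restartabstrait} then yields $\Ebarre_\lambda[\exp(\gamma(\sigma(x)-t(x)))] \le C_2$, uniformly for $\gamma > 0$ small enough.

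Combining by Cauchy--Schwarz,
$$\Ebarre_\lambda[\exp(\gamma_1 \sigma(x))] \le \Ebarre_\lambda[\exp(2\gamma_1 t(x))]^{1/2}\,\Ebarre_\lambda[\exp(2\gamma_1 (\sigma(x)-t(x)))]^{1/2} \le e^{\beta_1 \|x\|_1},$$
for $\gamma_1 > 0$ small enough and $\beta_1 > \gamma_1/c$ chosen large enough to absorb the multiplicative constants (the case $x = 0$ being trivial since $\sigma(0)=0$). The main obstacle is the uniform control of the inter-attempt waiting times $u_{k+1} - v_k$: at time $v_k$ the configuration $\xi^0_{v_k}$ is random and has intricate dependence on the past, so showing that one can \emph{uniformly} locate within it a surviving source at controlled distance from $x$, ready to seed a fresh application of~(\ref{retouche}), is the step requiring the most care. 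Once this uniformity is secured, the abstract restart mechanism handles the remaining renewal bookkeeping straightforwardly.
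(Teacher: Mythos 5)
Your decomposition $\sigma(x) = t(x) + (\sigma(x)-t(x))$ with the identity $\sigma(x)-t(x) = \sum_{k=1}^{K(x)-1}\bigl[\tau^x\circ\theta_{u_k} + (u_{k+1}-v_k)\bigr]$ is correct, and the treatment of $t(x)$ via~\eqref{retouche} and~\eqref{uniftau} is sound. The gap lies exactly where you flagged it yourself: the inter-attempt waiting times $u_{k+1}-v_k$. The estimates you invoke do not control them uniformly. At the (random, typically large) time $v_k$, inequality~\eqref{richard} only places the surviving configuration inside $B_{Mv_k}$, so the nearest immortal source $y$ is at distance of order $Mv_k + \|x\|$ from $x$; applying~\eqref{retouche} from this $y$ then bounds $u_{k+1}-v_k$ by roughly $(Mv_k + \|x\|)/c$ plus an exponential tail. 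This increment \emph{grows with} $v_k$, so the sequence of times $v_k$ escalates geometrically across failed attempts and Lemma~\ref{restartabstrait} no longer produces a uniform constant. Worse, even a uniform bound of order $\|x\|$ on each waiting time would not save the claim $\Ebarre_\lambda[\exp(\gamma(\sigma(x)-t(x)))]\le C_2$: with $K(x)$ geometric with rate independent of $x$, a contribution of size $\gamma\|x\|$ per failed attempt defeats the geometric decay for $\|x\|$ large, and Cauchy--Schwarz would then fail to give~\eqref{momexpsigma}.

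What the paper does instead is precisely designed to avoid ever bounding $u_{k+1}-v_k$. It isolates the sum of \emph{durations} $\sum_{k=1}^{K(x)-1}(v_k-u_k)$, which \emph{does} have uniform exponential moments (each $v_k-u_k$ is a sub-critical-conditioned lifetime, controlled by~\eqref{grosamasfinis}, and the number of attempts is geometric via~\eqref{uniftau}, so Lemma~\ref{restartabstrait} applies cleanly). Then Theorem~\ref{lemme-pointssourcescontact} supplies the missing ingredient: with probability $1-Ae^{-Bt}$, the set of times $s\le C\|x\|+t$ at which $(0,0)\to(x,s)\to\infty$ has Lebesgue measure at least $\theta t$. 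If $\sigma(x)\ge C\|x\|+t$, all such times are necessarily swallowed by the intervals $[u_k,v_k]$ (they are ``ignored'' by the recursive construction), forcing $\sum(v_k-u_k)\ge \theta t$, which has exponentially small probability. This density-of-good-times estimate is the renormalization step that your sketch tries to bypass, but the waiting times $u_{k+1}-v_k$ cannot be tamed without a result of this strength: locating a surviving source at bounded distance from $x$ at time $v_k$ with exponentially small failure probability is essentially a lower large deviation statement for the coupled region, which is not available at this stage with the uniformity in $\lambda$ you need. The paper's block construction behind Theorem~\ref{lemme-pointssourcescontact} is exactly what provides it, but repackaged as a density estimate so that only the bounded quantities $v_k-u_k$ ever need to enter the restart lemma.
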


As an immediate consequence, we get

\begin{coro}
\label{sauveur}
There exist positive constants $A,B,c,$ such that for each
$\lambda\in\Lambda$, each  $x \in \Zd$ and every $ t\ge0$
\begin{eqnarray*}
 \Pbarre_{\lambda}\left( t'(x)\ge \frac{\|x\|}c+t \right) & \le & A\exp(-Bt).
\end{eqnarray*}
\end{coro}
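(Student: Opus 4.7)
The statement is advertised as an immediate consequence of Theorem \ref{theomomexpsigma}, and indeed the proof proceeds by combining that exponential moment estimate with the control of $t'(x)-\sigma(x)$ already extracted inside the proof of Lemma \ref{momtprime}.

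First I would feed the bound $\Ebarre_\lambda[e^{\gamma_1\sigma(x)}] \le e^{\beta_1\|x\|_1}$ of Theorem \ref{theomomexpsigma} into a plain Chernoff estimate: for every $u\ge 0$,
\[
\Pbarre_\lambda(\sigma(x)\ge u) \;\le\; e^{-\gamma_1 u}\,\Ebarre_\lambda[e^{\gamma_1\sigma(x)}] \;\le\; \exp\bigl(\beta_1\|x\|_1-\gamma_1 u\bigr).
\]
Using the equivalence of norms on $\Rd$ to replace $\|x\|_1$ by a constant multiple of $\|x\|$, I choose $c>0$ small enough that the coefficient of $\|x\|$ in the exponent is dominated when $u=\|x\|/c+t/2$; this yields a uniform (in $\lambda\in\Lambda$ and $x\in\Zd$) bound of the form
\[
\Pbarre_\lambda\bigl(\sigma(x)\ge \|x\|/c+t/2\bigr)\;\le\; A_1\exp(-B_1 t).
\]

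Next I would invoke the intermediate estimate~(\ref{demai}) appearing in the proof of Lemma \ref{momtprime}, namely that for some constants (still independent of $\lambda$ and $x$)
\[
\Pbarre_\lambda\bigl(t'(x)>\sigma(x)+s\bigr)\;\le\; A_2\exp(-B_2 s)\qquad\text{for all }s\ge 0,
\]
which itself follows from translating the coupling-time event by $\tilde{\theta}_x$ together with the survival-conditioned estimate~(\ref{petitsouscouple}).

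Finally I would combine the two tail bounds through the union bound
\[
\{t'(x)\ge \|x\|/c+t\}\;\subset\;\{\sigma(x)\ge \|x\|/c+t/2\}\;\cup\;\{t'(x)>\sigma(x)+t/2\},
\]
which yields the desired estimate with constants $A=A_1+A_2$ and $B=\min(B_1,B_2)$. No step presents a real obstacle here; the only mild care is in matching the constants so that the Chernoff bound absorbs the $\|x\|_1$-term, which is why we need to pick $c$ small (this is precisely where Theorem \ref{theomomexpsigma}, uniform in $\lambda$, is essential — the same argument with only polynomial moments as in~(\ref{moms}) would not suffice).
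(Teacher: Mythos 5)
Your proposal is correct and follows exactly the paper's own argument: decompose $\{t'(x)\ge \|x\|/c+t\}$ into $\{\sigma(x)\ge \|x\|/c+t/2\}\cup\{t'(x)-\sigma(x)\ge t/2\}$, control the first event by a Chernoff bound from Theorem~\ref{theomomexpsigma}, and the second by the estimate~\eqref{demai}. The only cosmetic difference is that the paper simply states $c=\gamma_1/\beta_1$ while you are slightly more careful about absorbing the $\|x\|_1$--term via the norm-equivalence constant, which is a harmless refinement.
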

\begin{proof}
$$\Pbarre_{\lambda}\left( t'(x)\ge \frac{\|x\|}c+t\right)\le \Pbarre_{\lambda}\left( \sigma(x)\ge \frac{\|x\|}c+t/2\right)+\Pbarre_{\lambda}(t'(x)-\sigma(x)\ge t/2).$$
The second term is controlled by Inequality~\eqref{demai} and Theorem~\ref{theomomexpsigma} gives the desired result with $c=\frac{\gamma_1}{\beta_1}$.
\end{proof}

The rest of this section is organized as follows. We first prove how the subadditive properties and the existence of exponential moments for $\sigma$ given by Theorem~\ref{theomomexpsigma}  imply the large deviations inequalities of Theorem~\ref{theoGDUQ}. Next we show how Theorem~\ref{lemme-pointssourcescontact} gives Theorem~\ref{theomomexpsigma}. Finally, the last (and most important) part will be devoted to the proof of Theorem~\ref{lemme-pointssourcescontact}.

\subsection{Proof of Theorem~\ref{theoGDUQ} from Theorem~\ref{theomomexpsigma}}

Let $\epsilon>0$. Let $\beta_1$ and $\gamma_1$ be the  constants given by~(\ref{momexpsigma}), and let
\begin{equation}
C>2 \beta_1/\gamma_1. \label{jechoisisC}
\end{equation}

Theorem~\ref{thFA} gives the almost sure convergence of $\sigma(x)/\mu(x)$ to $1$ when $\|x\|$ tends to $+\infty$, and Proposition~\ref{propmoments} ensures that the family  $(\sigma(x)/\mu(x))_{x \in \Zd}$ is bounded in $L^2(\Pbarre)$, therefore uniformly integrable: then the convergence also holds in $L^1(\Pbarre)$. 

Let then $M_0$ be such that
\begin{equation}
\label{je choisisM0}
( \mu(x) \ge M_0) \quad \Rightarrow \quad \left( \frac{\Ebarre(\sigma(x))}{\mu(x)}\right) \le 1+\epsilon/8.
\end{equation}
We assumed that $\{ay: \;a\in\R_+,y\in\Erg(\nu)\}$ is dense in $\Rd$.
Its range by $x\mapsto \frac{x}{\mu(x)}$ is therefore dense in $\{x\in\Rd: \;\mu(x)=1\}$, thus 
the set
$\{\frac{y}{\mu(y)}: \;y\in\Erg(\nu), \,\mu(y)\ge M_0\}$ is also dense in
 $\{x\in\Rd:\;\mu(x)=1\}$.
By a compactness argument, one can find a finite subset $F$
in $\{\frac{y}{\mu(y)}:\;y\in\Erg(\nu), \,\mu(y)\ge M_0\}$
such that
$$\forall \hat{x}\in \Rd \text{ such that } \mu(\hat{x})=1 \quad\exists  y\in F, \; \left\|\frac{y}{\mu(y)}-\hat{x}\right\|_1\le \epsilon/C.$$
We let $M=\max\{\mu(y):\;y\in F\}.$

For $y\in F$, note $\tilde{\sigma}(y)=\sigma(y)-(1+\frac{\epsilon}4)\mu(y)$. Since, with (\ref{momexpsigma}),  $\tilde{\sigma}(y)$ admits exponential moments, the asymptotics $\Ebarre[e^{t\tilde{\sigma}(y)}]=1+t\Ebarre[\tilde{\sigma}(y)]+o(t)$ holds in the neighborood of $0$.
Since $\Ebarre[\tilde{\sigma}(y)]<0$, we have $\Ebarre[e^{t\tilde{\sigma}(y)}]<1$ when $t$ is  small enough. Since $F$ is finite, 
we can find some constants $\alpha>0$ and $c_\alpha<1$ such that
\begin{eqnarray*}
 \forall y\in F && \Ebarre[ \exp(\alpha (\sigma(y)-(1+\epsilon/4)))]\le c_{\alpha}.
\end{eqnarray*}
Let $x\in\Zd$. We  associate to $x$ a point $y\in F$ and an integer $n$ such that
\begin{equation}
\label{jechoisisxn}
\left\| \frac{x}{\mu(x)}-\frac{y}{\mu(y)}\right\|_1\le \frac{\epsilon}C \text{ and } \left|n-\frac{\mu(x)}{\mu(y)}\right|\le 1.
\end{equation}
By the definition of $t(x)$, for each $\lambda \in \Lambda$, we have
\begin{eqnarray}
&& \Pbarre_{\lambda} \left( t(x) \ge (1+\epsilon)\mu(x) \right) \nonumber \\
& \le & \Pbarre_{\lambda} \left( \sum_{i=0}^{n-1} \sigma(y)\circ\tilde{\theta}_y^i + \sigma(x-ny)\circ\tilde{\theta}_y^n \ge (1+\epsilon)\mu(x) \right) \nonumber \\
& \le & \Pbarre_{\lambda} \left( \sum_{i=0}^{n-1} \sigma(y)\circ\tilde{\theta}_y^i \ge \left(1+\frac{\epsilon}2 \right)\mu(x)\right) + \Pbarre_{\lambda} \left(\sigma(x-ny)\circ\tilde{\theta}_y^n \ge \frac{\epsilon}2 \mu(x) \right).
\label{deuxtermes}
\end{eqnarray}
Let first consider the first term in~(\ref{deuxtermes}). With Proposition~\ref{invariancePbarre} and estimate~(\ref{momexpsigma}), it follows that
\begin{eqnarray*}
\Pbarre_{\lambda} \left(\sigma(x-ny)\circ\tilde{\theta}_y^n \ge \frac{\epsilon}2 \mu(x) \right)
& = & \Pbarre_{ny. \lambda} \left(\sigma(x-ny) \ge \frac{\epsilon}2 \mu(x) \right) \\
& \le & \exp\left( -\frac{\gamma_1 \epsilon \mu(x)}2 \right) \Ebarre_{ny.\lambda}( \exp(\gamma_1 \sigma(x-ny)))\\
& \le & \exp\left( -\frac{\gamma_1 \epsilon \mu(x)}2 \right)  \exp(\beta_1 \|x-ny\|_1).
\end{eqnarray*}
Our choices~(\ref{jechoisisxn}) for $y$ and $n$ and the definition of $M$ ensure that
$$\|x-ny\|_1\le \left\| x -\frac{\mu(x)}{\mu(y)} y \right\|_1+\left| \frac{\mu(x)}{\mu(y)} - n \right| \|y\|_1 \le \frac{\epsilon \mu(x)}C+M.$$
Our choice~(\ref{jechoisisC}) for $C$ gives then the existence of two positive constants $A_1$ and $B_1$ such that for each $\lambda \in \Lambda$ and each $x \in \Zd$,
$$\Pbarre_{\lambda}\left(\sigma(x-ny)\ge \frac{\epsilon}{2}\mu(x)\right)\le A_1\exp(-B_1\|x\|).$$
Let us move to the first term of~(\ref{deuxtermes}). Our choices~(\ref{jechoisisxn}) for $y$ and $n$ ensure that 
$$ \left| \frac{\mu(x)}{n \mu(y)}-1\right| \le \frac1n \le \left(\frac{\mu(x)}{M}-1\right)^{-1}.$$
Then, we can find $T$ sufficiently large to have, for $\mu(x) \ge T$, that
$$\frac{\mu(x)}{n \mu(y)} \ge \frac{1+\epsilon/4}{1+\epsilon/2}.$$

Suppose now that $\mu(x) \ge T$. Proposition~\ref{invariancePbarre} ensures that the variables $\sigma(y)\circ\tilde{\theta}_y^i$ are  independent under $\Pbarre_\lambda$ and moreover that the law of $\sigma(y)\circ\tilde{\theta}_y^i$ under $\Pbarre_\lambda$ coincides with the law of $\sigma(y)$ under $\Pbarre_{iy.\lambda}$ : thus
\begin{eqnarray*}
&&\Pbarre_{\lambda} \left( \sum_{i=0}^{n-1} \sigma(y)\circ\tilde{\theta}_y^i \ge \left(1+\frac{\epsilon}2 \right)\mu(x)\right)\\& \le & \Pbarre_{\lambda} \left( \sum_{i=0}^{n-1} \sigma(y)\circ\tilde{\theta}_y^i\ge (1+\frac{\epsilon}4) n\mu(y)\right) \\
&\le & \Pbarre_{\lambda} \left( \prod_{i=0}^{n-1} \exp \left( \alpha [\sigma(y)\circ\tilde{\theta}_y^i - (1+\frac{\epsilon}4)\mu(y)]\right) \ge 1 \right)  \\
& \le & \prod_{i=0}^{n-1} \Ebarre_{iy.\lambda} \left[ \exp \left( \alpha(\sigma(y)-(1+\frac{\epsilon}4)\mu(y)) \right) \right].
\end{eqnarray*}
Applying the Ergodic Theorem to the system $(\Lambda,\mathcal{B}(\Lambda),\nu, y.)$ and to the function $\lambda \mapsto \log \Ebarre_{\lambda} \left( \exp[\alpha(\sigma(y)-(1+\epsilon/4)\mu(y))] \right)$, we get that for $\nu$-almost every $\lambda$ and for each $y\in F$,
\begin{eqnarray*}
&& \miniop{}{\limsup}{n\to+\infty} \frac1{n}\log \Pbarre_{\lambda} \left( \frac{1}{n\mu(y)} \sum_{i=0}^{n-1} \sigma(y)\circ\tilde{\theta}_y^i \ge 1+\epsilon/4 \right) \\
& \le & \int_{\Lambda} \log \Ebarre_{\lambda} \left(\exp[\alpha(\sigma(y)-(1+\epsilon/4)\mu(y))] \right) d\nu(\lambda)\\
& \le & \log \int_{\Lambda}  \Ebarre_{\lambda} \left( \exp[\alpha(\sigma(y)-(1+\epsilon/4)\mu(y))]\right) d\nu(\lambda) \le \log c_{\alpha}<0.
\end{eqnarray*}
Using the norm equivalence theorem and noting that the choices~(\ref{jechoisisxn}) for $n$ and $y$ ensure that
$$ \frac{n}{\mu(x)} \le \frac{1}{M} +\frac{1}{T},$$
we deduce that
$$ \miniop{}{\limsup}{\|x\|\to +\infty}\frac{\log \Pbarre_{\lambda}(t(x)\ge\mu(x)(1+\epsilon))}{\|x\|}\le -C_{\epsilon},
$$ 
with $C_{\epsilon}=\max(-\log c_{\alpha},B_1)$. 
Inequality~(\ref{venus}) of Theorem~\ref{theoGDUQ} follows (with another  $C_{\epsilon}$, if necessary).

Let us move to the proof of inequality~(\ref{tcouple}) of Theorem~\ref{theoGDUQ}.
Let $T= \sum_{i=0}^{n-1} \sigma(y)\circ\tilde{\theta}_y^i + \sigma(x-ny)\circ\tilde{\theta}_y^n$. Using Corollary~\ref{invariancePbarre} repeatedly, the same reasoning as in the proof of Lemma~\ref{momtprime} gives $\Pbarre_{\lambda}(t'(x)>T+\epsilon\mu(x))\le \Pbarre_{x.\lambda}(0\not\in K'_{\epsilon\mu(x)})\le A\exp(-B\mu(x))$. Thus,  since $\Pbarre_{\lambda}(t'(x)>(1+2\epsilon)\mu(x))\le \Pbarre_{\lambda}(T>(1+\epsilon)\mu(x))+\Pbarre_{\lambda}(t'(x)>T+\epsilon\mu(x))$ and  $T$ has already been controlled, inequality~\eqref{tcouple} follows.

Let us prove inequality~\eqref{audessousforme} of Theorem~\ref{theoGDUQ}.
Since $t\mapsto K'_t\cap H_t$ is non-decreasing, it is sufficient to prove that there exist constants $A,B>0$  such thay
$$\forall n\in\N\quad \Pbarre((1-\epsilon)nA_{\mu}\not\subset K'_n\cap H_n)\le A\exp(-Bn).$$
The proof of the last inequality is classic. For points that have a small norm, we use inequality~\eqref{retouche} and Corollary~\ref{sauveur}; for the other ones, we use inequalities~\eqref{venus} and~\eqref{tcouple}.  

\subsection{Proof of Theorem~\ref{theomomexpsigma} from Theorem~\ref{lemme-pointssourcescontact}}

Theorem~\ref{lemme-pointssourcescontact} ensures that with a probability exceeding $1-A\exp(-Bt)$, the Lebesgue measure of the times $s\le C\|x\|+t$ when $(0,0) \to (x,s) \to \infty$ is at least $\theta t$. If $\sigma(x)\ge C\|x\|_\infty+t$, it means that all these times are ignored by the recursive construction of $\sigma(x)$: those times necessarily belong to 
$\miniop{K(x)-1}{\cup}{i=1}[u_k(x),v_k(x)] $. Thus, we choose $\theta,C$ as in  Theorem~\ref{lemme-pointssourcescontact} and get
\begin{eqnarray*}
&& \Pbarre_{\lambda}(\sigma(x)\ge C\|x\|+t) \\
& \le &\Pbarre_{\lambda} \left( \{s\le C\|x\|+t:\; (0,0)\to (x,s)\to\infty\}\subset\miniop{K(x)-1}{\cup}{i=1}[u_k(x),v_k(x)]\right)\\
& \le & \Pbarre_{\lambda}(\Leb(\{s\le C\|x\|+t:\; (0,0)\to (x,s)\to\infty\})\le\theta t)\\
&& +\Pbarre_{\lambda} \left(\miniop{K(x)-1}{\sum}{i=1}(v_k(x)-u_k(x))>\theta t \right).
\end{eqnarray*}
Lemma~\ref{lemme-pointssourcescontact} allows to control the first term. To control the second one with a Markov inequality, it is sufficient to prove the existence of exponential moments for $\displaystyle \miniop{K(x)-1}{\sum}{i=1}(v_k(x)-u_k(x))$. To do so, we apply the abstract restart Lemma~\ref{restartabstrait}. We define,  for each subset $B$ in $\Zd$, $F^B=0$ and
\begin{eqnarray*}
T^B & = & \inf\{t >\tau^x: \; x \in \xi_t^B\}, \\
G^B & = & \tau^x.
\end{eqnarray*}
Estimate~(\ref{uniftau}) ensures that for each $\lambda \in \Lambda$,
$$\P_\lambda(T^B=+\infty)\ge \P_\lambda( \tau^x=+\infty)\ge \rho>0,$$
and estimate~(\ref{retouche}) ensures the existence of $\alpha>0$ and $c<1$ -- that do not depend on $B$ -- such that  for each $\lambda \in \Lambda$,
$$\E_\lambda[\exp(\alpha G^B)\1_{\{T^B<+\infty\}}]\le\E_\lambda[\exp(\alpha \tau^x) \1_{\{\tau^x<+\infty\}}] = \E_{x.\lambda}[\exp(\alpha \tau^0) \1_{\{\tau^0<+\infty\}}]\le c.$$
Then, with the notation of Lemma~\ref{restartabstrait}, we have
\begin{eqnarray*}
 && \E_\lambda \left[  \exp \left( \alpha \miniop{K(x)-1}{\sum}{i=1}(v_k(x)-u_k(x)) \right)  \right] 
 =  \E_\lambda \left[  \exp \left( \alpha \miniop{K(x)-1}{\sum}{i=0} \tau^x \circ T_k  \right)  \right] 
 \le  \frac{1}{1-c}.
\end{eqnarray*}
To conclude, we note, using~(\ref{uniftau}),  that $\Ebarre_\lambda(.) \le \E_\lambda(. )/\rho$.
\subsection{Proof of Theorem~\ref{lemme-pointssourcescontact}}

We will include in the contact process a block event percolation: sites will correspond to large blocks in   $\Z^d \times [0,\infty)$, and the opening of the bonds will depend of the occuring of  good events that we define now.

\subsubsection{Good events}
\begin{figure}[h!]
\caption{The good event $A(\bar{n_0},u,x_0,x_1)$.} 
\end{figure}

Let $C_1>0$ and $M_1>0$ be fixed. \\
Let $I \in \N^*$, $L \in \N^*$ and $\delta>0$ such that $I \le L$ and $\delta<C_1L$. For $\bar{n_0} \in \Zd$, $x_0,x_1\in [-L,L[^d$ and $u \in \Zd$ such that $\|u\|_1\le1$, we define the following event: 
\begin{eqnarray*}
A(\bar{n_0},u,x_0,x_1) & = & A_{I,L,\delta}^{C_1,M_1}(\bar{n_0},u,x_0,x_1) \\
& = & \left\{
	\begin{array}{c}
	\exists t \in [0,C_1L-\delta] \quad 2L\bar{n_0}+x_1 \in \xi_t^{2L\bar{n_0}+x_0+[-I,I]^d} \\
\omega_{2L\bar{n_0}+x_1}([t,t+\delta])=0\\
	\exists s \in 2L(\bar{n_0}+u) +[-L,L]^d \quad s+[-I,I]^d \subset \xi^{2L\bar{n_0}+x_1}_{C_1L-t}\circ \theta_t \\
	\bigcup_{t \in [0,C_1L]} \xi_t^{2L\bar{n_0}+[-L-I,I+L]^d} \subset 2L\bar{n_0}+[-M_1L,M_1L]^d
         \end{array}
\right\}.
\end{eqnarray*}
We let  then $T=C_1L$. When the event $A(\bar{n_0},u,x_0,x_1)$ occurs, we denote by $s(\bar{n_0},u,x_0,x_1)$ a point  $s$ satisfying the last  condition that defines the event. Else, we let $s(\bar{n_0},u,x_0,x_1)=\infty$.

If this event occurs, then:
\begin{itemize}
 \item Starting from an area of size $I$ centered at a starting point $2L\bar{n_0}+x_0$ in the box with spatial coordinate $\bar{n_0}$, the process at time $T$ colonizes an area of size $I$ centered around the exit point $2L(\bar{n_0}+u)+s(\bar{n_0},u,x_0,x_1)$ in the box with spatial coordinate $\bar{n_0}+u$.
\item Moreover, the point $2L\bar{n_0}+x_1$ is occupied between time $0$ and time $T$ in a time interval with duration at least $\delta$.
\item The realization of this event only depends on what happens in the space-time box  $(2L\bar{n_0}+[-M_1L,M_1L])\times[0,T]$.
\end{itemize}

Let us give a summary of the different parameters:

\vspace{2mm}
\noindent
\begin{tabular}{|l|l|}
 \hline
$L$ & spatial scale of the macroscopic boxes  \\
$I$ & size of the entrance area and of the exit area $(I\le L$) \\
$T$ & temporal size of the  macroscopic boxes ($T=C_1L$)\\
$\delta$ & minimum duration for the infection of $x_1$\\
$\bar{n_0}$ & macroscopical spatial coordinate (coordinate of the big box) \\
$u$ & direction of move ($\|u\|_1\le 1$) \\
$x_0$ & relative position of the entrance area in the box ($x_0\in [-L,L[^d$) \\
$x_1$ & relative position of the target point ($x_1\in [-L,L[^d$)\\
$s(\bar{n_0},u,x_0,x_1)$ & relative position of the exit area  in the box \\
& with coordinate $(\bar{n_0}+u)$ ($s(\bar{n_0},u,x_0,x_1) \in [-L,L[^d$) \\
\hline
\end{tabular}

\begin{lemme}
\label{bonev1}
We can find  constants $C_1>0$ and $M_1>0$ such that we have the following property.

For each $\varepsilon>0$, we can choose, in that specific order, two integers $I \le L$ large enough  and $\delta>0$ small enough  such that for every $\lambda\in \Lambda$, $\bar{n_0} \in \Zd$, and each $u \in \Zd$ with $\|u\|_1\le 1$,  
$$\forall x_0,x_1\in [-L,L[^d\quad \P_\lambda(A(\bar{n_0},u,x_0,x_1))\ge 1-\varepsilon.$$
Moreover, as soon as $\|\bar{n_0}-\bar{n_0'}\|_\infty \ge 2M_1+1$, for every $u,u',x_0,x_0',x_1$,
$$\text{ the events } A(\bar{n_0},u,x_0,x_1) \text{ and } A(\bar{n_0}',u',x_0',x_1) \text{ are independent.}$$
\end{lemme}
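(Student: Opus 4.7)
The plan is to establish Lemma~\ref{bonev1} by a Bezuidenhout--Grimmett type block construction for the supercritical contact process, made uniform in $\lambda\in\Lambda$. The uniformity is available because $\lambda_{\min}>\lambda_c(\Zd)$, so every process in the family is strictly supercritical and the estimates of Proposition~\ref{propuniforme} hold with constants that do not depend on $\lambda$.

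First, I fix the parameters $C_1$ and $M_1$. By~\eqref{richard} the infected set from a single site stays in a ball of radius $Mt$, so taking $M_1>M$ and union-bounding over the $O(L^d)$ starting sites of the seed cube yields the confinement clause of $A(\bar n_0,u,x_0,x_1)$ up to probability $AL^d e^{-BL}$, which is $\le\varepsilon/4$ for $L$ large. By~\eqref{retouche} the process expands at some positive speed $c>0$ conditionally on survival; I pick $C_1$ so that $cC_1\ge 10$, ensuring that in time $\ge 3C_1L/4$ the supercritical cone from any surviving site reaches the target box $2L(\bar n_0+u)+[-L,L]^d$.

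Next I exhibit the time $t$ appearing in $A(\bar n_0,u,x_0,x_1)$. Combining~\eqref{uniftau} with the coupled-region bound~\eqref{petitsouscouple}, for $I$ large the seed $2L\bar n_0+x_0+[-I,I]^d$ survives, and inside the confinement box its infected set agrees with the upper-invariant process $\xi^{\Zd}_s$ for all $s\in[C_1L/8,C_1L/4]$, up to total probability $\varepsilon/4$. Since $\xi^{\Zd}$ is stationary with density $\rho(\lambda)\ge\rho$ of infected sites, a Fubini argument shows that $2L\bar n_0+x_1$ is infected on a subset of $[C_1L/8,C_1L/4]$ of positive Lebesgue measure with probability $\ge 1-\varepsilon/4$; let $t^*$ be the first such infection time. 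The no-recovery clause $\omega_{2L\bar n_0+x_1}([t^*,t^*+\delta])=0$ then has conditional probability $e^{-\delta}\ge 1-\varepsilon/8$, by the Poisson property applied at the stopping time $t^*$. Finally, by the strong Markov property, the restart $\xi^{\{2L\bar n_0+x_1\}}\circ\theta_{t^*}$ is a fresh contact process independent of $\mathcal F_{t^*}$: by~\eqref{uniftau} it survives with probability $\ge\rho$, and on survival, by~\eqref{retouche} and the choice of $C_1$, it colonizes a cube $s+[-I,I]^d$ inside the target box by time $C_1L-t^*$ with probability $\ge 1-\varepsilon/8$. The gap between $\rho$ and $1-\varepsilon$ is closed by the standard Bezuidenhout--Grimmett ``killing'' step: one extracts $k(\varepsilon)$ well-separated infection stopping times of $x_1$ and truncates each restart at a short local window so that the attempts depend on disjoint Poisson data, giving at least one successful attempt with probability $\ge 1-(1-\rho/2)^k\ge 1-\varepsilon/4$.

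Spatial independence is direct: the entire event depends only on the Poisson data in the cylinder $(2L\bar n_0+[-M_1L,M_1L]^d)\times[0,C_1L]$, so when $\|\bar n_0-\bar n_0'\|_\infty\ge 2M_1+1$ the two cylinders are disjoint and the events factorize by the product form of $\P_\lambda$. The technical heart of the argument — and the main obstacle — is the boosting step: producing, within a single trajectory and uniformly in $\lambda\in\Lambda$, enough conditionally-independent restart attempts from $\{2L\bar n_0+x_1\}$ to upgrade the one-shot survival probability $\rho$ into a probability $\ge 1-\varepsilon$. Everything else is a union bound over the uniform estimates of Proposition~\ref{propuniforme}.
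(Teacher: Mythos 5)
Your proposal correctly identifies the key obstacle --- upgrading the one-shot restart success probability $\rho$ to $1-\varepsilon$ --- but leaves a genuine gap precisely there, and it is not fixed by a routine Bezuidenhout--Grimmett localization. The event $A(\bar n_0,u,x_0,x_1)$ requires, for a \emph{single} time $t$, that the restart $\xi^{\{2L\bar n_0+x_1\}}\circ\theta_t$ both survive and occupy a full cube $s+[-I,I]^d$ inside $2L(\bar n_0+u)+[-L,L]^d$ by time $C_1L-t$. Truncating $k$ attempts to disjoint local space-time windows does make the \emph{local-survival} indicators approximately independent, but a locally surviving configuration at the end of a window is not the singleton $\{x_1\}$; propagating colonization from that configuration uses Poisson data that overlaps in space-time with the later attempts, so the $(1-\rho/2)^k$ bound does not apply to the colonization event you actually need. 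Closing this gap is, in effect, a block construction of comparable size to the whole lemma, and it must be carried out uniformly in $\lambda\in\Lambda$ --- it is the content, not a standard preliminary.

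The paper circumvents boosting entirely by using the essential hitting time $\sigma(x_1-y)$ for $y$ in the seed cube: by its construction (the recursive $u_k,v_k$ scheme), $\sigma(x_1-y)$ is a time at which $x_1$ is occupied \emph{and has infinite progeny}, so on $\{\tau^y=+\infty\}$ the restart from $x_1$ is automatically immortal --- there is nothing to boost. Proposition~\ref{magic} then supplies the regeneration that lets the colonization clause be attached after $\sigma$, with the time budget controlled through~\eqref{asigma} and~\eqref{momtprimeeq}, a self-duality/time-reversal argument producing the fully occupied cube in the target box, and a union bound over the $(2I+1)^d$ choices of $y\in x_0+[-I,I]^d$ (via $\epsilon'=\epsilon/(2I+1)^d$). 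Two further inaccuracies in your sketch: $M_1>M$ is too small for the confinement clause, since the seed $[-L-I,I+L]^d$ has radius $\approx 2L$ and grows by $\approx MC_1L$ over $[0,C_1L]$, so one needs $M_1\ge MC_1+2$ as in the paper; and~\eqref{retouche} bounds the hitting time of a single site, not the occupation of a full cube $s+[-I,I]^d$, so that clause also requires the coupled-region/duality step rather than a direct appeal to~\eqref{retouche}.
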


\begin{proof}
Let us first note that 
$$\P_\lambda(A(\bar{n_0},u,x_0,x_1))=\P_{2L\bar{n_0}.\lambda}(A(0,u,x_0,x_1)),$$
which permits to assume that $\bar{n_0}=0$. 
Let $\epsilon>0$ be fixed. We first choose $I$ large enough to have 
\begin{equation}
\label{choixI} 
\forall x \in \Zd \quad \P_{\lambda_{\min}}(\tau^{x+[-I,I]^d}=+\infty) \ge 1-\epsilon/4.
\end{equation}
We let  $\epsilon'=\epsilon/(2I+1)^d$. \\
By the FKG Inequality,
$\P_{\lambda_{\min}}(\forall y\in [-I,I]^d, \tau^y=+\infty)>0$.
Translation invariance gives  then $$\miniop{}{\lim}{L\to +\infty}
\P_{\lambda_{\min}}(\exists n\in [0,L]: \; \forall y\in ne_1+[-I,I]^d, \tau^y=+\infty)=1.$$
Let then $L_1$ be such that
$$\P_{\lambda_{\min}}(\exists n\in [0,L]; \forall y\in ne_1+[-I,I]^d, \tau^y=+\infty)>1-\frac{\epsilon'}{12}\P_{\lambda_{\min}}(\tau^0=+\infty).$$
By a time-reversal argument, we have for each $t>0$,
\begin{eqnarray*}
& & \P_{\lambda_{\min}}(\exists n\in [0,L]:\;  ne_1+[-I,I]^d\subset \xi^{\Zd}_t)\\
& = & \P_{\lambda_{\min}}(\exists n\in [0,L]:\; \forall y\in ne_1+[-I,I]^d, \tau^y\ge t)> 1-\frac{\epsilon'}{12}\P_{\lambda_{\min}}(\tau^0=+\infty).
\end{eqnarray*}
We have for each $t\ge 0$ and each $\lambda\in\Lambda$:
\begin{eqnarray*}
& &\P_{x_1.\lambda}(\tau^{0}=+\infty,\; \forall n\in [0,L], \, 2Lu-x_1+ne_1+[-I,I]\not\subset\xi^{0}_t)\\ 
& \le & \P_{x_1.\lambda}(\forall n\in [0,L],\, 2Lu-x_1+ne_1+[-I,I]\not\subset\xi^{\Zd}_t)\\
& & \quad \quad + \P_{x_1.\lambda}(\tau^{0}=+\infty,\,[-(I+4L),(I+4L)]^d\not\subset K'_t)\\
& \le & \P_{\lambda_{\min}}(\forall n\in [0,L], \, ne_1+[-I,I]\not\subset\xi^{\Zd}_t)\\
& & \quad \quad + \P_{x_1.\lambda}(\tau^0=+\infty,\,[-(4L+I),(4L+I)]^d\not\subset K'_t).
\end{eqnarray*}
Let $C>0$ be large enough to satisfy properties~\eqref{asigma} and~\eqref{momtprimeeq}.
Then, with~\eqref{momtprimeeq}, we can find $L_2\ge L_1$ such that
for $L\ge L_2$ and $t\ge 5CL$, we have
$$\Pbarre_{x_1.\lambda}(\exists n\in [0,L]; 2Lu-x_1+ne_1+[-I,I]\subset\xi^{0}_t)\ge 1-\epsilon'/6.$$
Let $\delta>0$ such that $1-e^{-\delta}\le \P_{\lambda_{\min}}(\tau^0=+\infty)\epsilon'/6$ and $\delta<5CL$: if we let 
$$F_t=\big\{\omega_{0}([0,\delta])=0;\; \exists n\in [0,L], \, 2Lu-x_1+ne_1+[-I,I]\subset\xi^{0}_t\big\},$$
we also have, for each $\lambda\in\Lambda$ and each $t\ge 5CL$, that $\Pbarre_{x_1.\lambda}(F_t)\ge 1-\epsilon'/3$.

Then, with Proposition~\ref{magic}, one deduces that if $y\in x+[-I,I]^d$, then
$$\Pbarre_{y.\lambda}(\sigma(x_1-y)\le 4CL, \; \tilde{\theta}_{x_1-y}^{-1} (F_{9CL-\sigma(x_1-y)})) \ge \Pbarre_{y.\lambda}(\sigma(x_1-y)\le 4CL)(1-\epsilon'/3).$$

Considering  estimate~(\ref{asigma}), we can choose $L_3\ge L_2$ such that for $L\ge L_3$, we have
$$\Pbarre_{y.\lambda}(\sigma(x_1-y)\le 4CL, \;\tilde{\theta}_{x_1-y}^{-1} (F_{9CL-\sigma(x_1-y)}))\ge 1-\epsilon'/2.$$
Let $C_1=9C$. With~\eqref{choixI} and the definition of $\epsilon'$, we get
$$
\P_{\lambda}\left(
	\begin{array}{c}
	\exists t \in [0,C_1L-\delta]: \quad x_1 \in \xi_t^{x_0+[-I,I]^d} \\
\omega_{x_1}([t,t+\delta])=0\\
	\exists s \in 2Lu +[-L,L]^d \quad s+[-I,I]^d \subset \xi^{x_1}_{C_1L-t}\circ \theta_t
\end{array}
\right)\ge 1-3\epsilon/4.$$

Finally, one takes for $M$ the  constant given by equation~(\ref{richard}) and lets $M_1=MC_1+2$. With~(\ref{richard}), we can find  $L \ge L_3$ sufficiently large to have  for each $\lambda \in \Lambda$:  
\begin{equation}
\label{choixM} 
\P_{\lambda_{\text{max}}}\left( \bigcup_{0 \le t \le C_1L} \xi^{[-L-I,L+I]^d}_t \subset [-M_1L,M_1L]^d \right)\ge 1-\epsilon/4;
\end{equation}
this fixes the integer $L$.

The local dependence of the events comes from the third condition in their definition. This concludes the proof of the lemma.
\end{proof}

\subsubsection{Dependent macroscopic percolation }
We fix $C_1,M_1$ given by Lemma~\ref{bonev1}. We choose $I \in \N^*$, $L \in \N^*$ and $\delta>0$ such that $I \le L$ and $\delta<C_1L$ and we let $T=C_1L$.

Let $x$ in $\Zd$ be fixed. 
We write $x=2L[x]+\{x\}$, with $\{x\}\in [-L,L[^d$ and $[x]\in\Zd$.
We will first, from the events defined in the preceding  subsection, build a field $(^xW^n_{(\bar{k},u)})_{n \ge 0, \bar{k} \in \Zd, \|u\|_1\le1}$.

The idea is to construct a macroscopic  oriented percolation on the bonds of $\Edo\times\N^*$, looking for the realizations, floor by floor, of translates of good events of type $A(.)$. We start from an area centered at $0$ in the box with coordinate $\bar{0}$; for each $u$ such that $\|u\|_1\le 1$, say that the bond between $(\bar{0},0)$ and $(u, 1)$ is open if $A(\bar{0},u,0,\{x\})$ holds; in that case  we obtain an  infected square centered at the exit point $s(\bar{0},u,0,\{x\})$; all bonds in this floor that are issued from another point than $\bar{0}$ are open, with fictive exit points equal to $\infty$. Then we move to the upper floor: for a box $(\bar{y},1)$, look if it contains exit points of bonds that were open at the preceding step. If it is the case, we choose one of these, denoted by $d^x_1(\bar{y})$, open the bond between $(\bar{y},1)$ and $(\bar{y}+u, 2)$ if $A(\bar{y},u,d^x_1(\bar{y}),\{x\})\circ \theta_{T}$ happens and close it otherwise; in the other case we open all bonds issued from that box, and so on for every floor.

Precisely, we let $d^x_0(\bar{0})=0$ and also $d^x_0(\bar{y})=+\infty$ for every $\bar{y} \in\Zd$ that differs from $0$.
Then, for each $\bar{y}\in\Zd$, each $u \in \Zd$ such that $\|u\|_1\le1$ and for each  $n\ge 0$, we recursively define:
\begin{itemize}
\item If $d^x_n(\bar{y})=+\infty$, $^xW^n_{(\bar{y},u)}=1$.
\item Otherwise, 
\begin{eqnarray*}
^xW^n_{(\bar{y},u)} & = & \1_{A(\bar{y},u,d^x_n(\bar{y}),\{x\})} \circ \theta_{nT}, \\
d^x_{n+1}(\bar{y}) & = & \min\{ s(\bar{y}+u,-u,d^x_n(\bar{y}+u),\{x\})\circ \theta_{nT}: \; \|u\|_1\le 1, \; d^x_n(\bar{y}+u)\ne+\infty \}.
\end{eqnarray*}
\end{itemize}
Recall that the definition of the function $s$ has been given with the one of a good event in the preceding subsection.
Then, $d^x_{n+1}(\bar{y})$ represents the relative position of the entrance area for the $^xW^{n+1}_{(\bar{y},u)}$'s, with $\|u\|_1\le1$. We may have several candidates, that are the relative positions of the exit areas of the $^xW^n_{(\bar{y}+u,-u)}$'s;  the $\min$ only plays the role of a choice function.

We thus obtain an oriented percolation process. Among open bonds, only those  corresponding to the realization of  good events are relevant for  the underlying contact process. Let us note however that the percolation cluster starting at  $\bar{0}$ only contains bonds that correspond to the propagation of the contact process.
  
\begin{lemme}
\label{domistoc}
Again, we work with $C_1,M_1$ given by Lemma~\ref{bonev1}.
For each $q<1$, we can choose parameters $I,L, \delta$ such that for each $\lambda \in \Lambda$, and each $x \in \Zd$,
$$\text{the law of }(^xW_e^n)_{(e,n) \in\Edo\times\N^*}\text{under }\P_{\lambda}\text{ is in }\mathcal{C}(2M_1+1,q).$$
\end{lemme}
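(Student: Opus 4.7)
The plan is to take the filtration $\mathcal{G}_n=\mathcal{F}_{nT}$ generated by all Poisson events up to time $nT$ and verify the two defining properties of $\mathcal{C}(2M_1+1,q)$. Given $q<1$, I first apply Lemma~\ref{bonev1} with $\varepsilon=1-q$ to pick integers $I\le L$ and a real $\delta>0$ such that for every $\lambda\in\Lambda$, every $\bar{n_0}\in\Zd$, every $u$ with $\|u\|_1\le 1$ and every $x_0,x_1\in[-L,L[^d$,
$$\P_\lambda(A(\bar{n_0},u,x_0,x_1))\ge q,$$
and such that the spatial independence property of Lemma~\ref{bonev1} holds; these parameters are fixed henceforth.

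For adaptedness, I proceed by induction on $n$: the initial entry $d^x_0$ is deterministic, hence $\mathcal{G}_0$-measurable. Assuming that $d^x_n(\bar{y})$ is $\mathcal{G}_n$-measurable for every $\bar{y}$, the event $A(\bar{y},u,d^x_n(\bar{y}),\{x\})\circ\theta_{nT}$ and the exit position $s(\bar{y},u,d^x_n(\bar{y}),\{x\})\circ\theta_{nT}$ depend only on the Poisson activity in the space-time window $(2L\bar{y}+[-M_1L,M_1L]^d)\times[nT,(n+1)T]$, composed with the $\mathcal{G}_n$-measurable entry point; both are therefore $\mathcal{G}_{n+1}$-measurable. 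Consequently ${}^xW^{n+1}_{(\bar{y},u)}$ is $\mathcal{G}_{n+1}$-measurable, and the minimum formula propagates the induction to $d^x_{n+1}$.

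For the conditional lower bound, fix an edge $e=((\bar{y},\bar{y}+u),n+1)$. Any edge $f=((\bar{y}',\bar{y}'+u'),n+1)$ with $d(e,f)\ge 2M_1+1$ satisfies $\|\bar{y}-\bar{y}'\|_\infty\ge 2M_1+1$, so the spatial box attached to $f$ is disjoint from the one governing $e$. Hence ${}^xW^{n+1}_f$ is determined by Poisson activity on a space-time region disjoint from the one defining ${}^xW^{n+1}_e$, and independence of the Poisson measures on disjoint sets yields
$$\P_\lambda\bigl({}^xW^{n+1}_e=1\mid \mathcal{G}_n\vee\sigma({}^xW^{n+1}_f:d(e,f)\ge 2M_1+1)\bigr)=\P_\lambda({}^xW^{n+1}_e=1\mid \mathcal{G}_n).$$
If $d^x_n(\bar{y})=+\infty$ this right-hand side equals $1$. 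Otherwise, the Markov property of Harris' graphical construction at time $nT$, combined with the $\mathcal{G}_n$-measurability of $d^x_n(\bar{y})$, turns it into $\P_\lambda(A(\bar{y},u,x_0,\{x\}))$ evaluated at $x_0=d^x_n(\bar{y})$, which is at least $q$ by the parameter choice.

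The main obstacle is the bookkeeping of measurability and independence: one must keep careful track of the fact that each block event $A(\bar{y},u,x_0,\{x\})\circ\theta_{nT}$ is controlled by Poisson activity in one bounded space-time slab, so that the spatial-disjointness feature supplied by Lemma~\ref{bonev1} exactly matches the conditional independence required by the class $\mathcal{C}$, and that the $\mathcal{G}_n$-measurable entry point $d^x_n(\bar{y})$ does not spoil the Markov step at time $nT$.
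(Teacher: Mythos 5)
Your proof is correct and follows essentially the same route as the paper: both take $\mathcal{G}_n=\mathcal{F}_{nT}$, invoke Lemma~\ref{bonev1} to fix $I,L,\delta$ so that each good event has probability at least $q$, verify measurability of $d^x_n$ and ${}^xW^n$ by the recursive construction, and then use the spatial disjointness of the governing space-time slabs to drop the far-away conditioning and reduce the conditional probability to $\P_\lambda(A(\bar{y},u,d^x_n(\bar{y}),\{x\}))\ge q$. You merely spell out the induction for adaptedness in more detail than the paper, which simply asserts it.
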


\begin{proof} 
For each  $n \in \N$, let $\mathcal G_n=\mathcal F_{nT}$, with $T=C_1L$. Let us note that, for each $x,\overline{k} \in \Zd$ and $n \ge 1$, the quantity $d^x_n(\overline{k})$ is $\mathcal{G}_n$-mesurable, and so does ${}^xW^n_{(\overline{k},u)}$. 

Lemma~\ref{bonev1} ensures that the events $A(\bar{k},u,x_0,\{x\})$ and $A(\bar{l},v,x_0',\{x\})$ are  independent as soon as $\|\bar{k}-\bar{l}\|_1 \ge 2M_1+1$; so we deduce that, conditionally to $\mathcal G_n$, the random variables  ${}^xW^{n+1}_{(\overline{k},u)}$ and ${}^xW^{n+1}_{(\overline{l},v)}$ are independent  as soon as $\|\overline{k}-\overline{l}\|_1 \ge 2M_1+1$.

Let now $x,\overline{k} \in \Zd$, $n \ge 0$ and $u \in \Zd$ such that $\|u\|_1\le 1$:
\begin{eqnarray*}
&& \E_{\lambda}[{}^xW^{n+1}_{(\overline{k},u)}|\mathcal{G}_n\vee \sigma({}^xW^{n+1}_ {(\overline{l},v)}, \;  \|v\|_1\le 1, \; \|\overline{l}-\overline{k}\|_1 \ge 2M_1+1)] \\
& = & \E_{\lambda}[{}^xW^{n+1}_{(\overline{k},u)}|\mathcal{G}_n] \\
& = & \1_{\{d^x_n(\overline{k})=+\infty\}}+ \1_{\{d^x_n(\overline{k})<+\infty\}}\P_{\lambda}[{}^xW^{n+1}_{(\overline{k},u)}=1|d^x_n(\overline{k})<+\infty] \\
& = & \1_{\{d^x_n(\overline{k})=+\infty\}}+ \1_{\{d^x_n(\overline{k})<+\infty\}}\P_{\lambda}[A(\overline{k},u,d^x_n(\overline{k}), \{x\})].
\end{eqnarray*}
With Lemma~\ref{bonev1}, we can choose integers $I <L$ and $\delta>0$ in such a way that
$$\E_{\lambda}[{}^xW^{n+1}_{(\overline{k},u)}|\mathcal{G}_n\vee \sigma({}^xW^{n+1}_ {(\overline{l},v)}, \;  \|v\|_1\le 1, \; \|\overline{l}-\overline{k}\|_1 \ge 2M_1+1)] \ge q.$$
This concludes the proof of the lemma.
\end{proof}

For this percolation process, we  denote by $\overline{\tau}^{\bar{k}}$ and $\overline{\gamma}(\theta,\bar{k},\bar{l})$
the lifetime starting from $\bar{k}$ and the essential hitting times of $\bar{l}$ starting from  $\bar{k}$ in the dependent oriented percolation induced by the Bernoulli random field $(^xW_e^n)_{(e,n) \in\Edo\times\N^*}$.

\begin{lemme}
\label{controledep}
We can choose the parameters $I,L,\delta$ such that the following holds:
\begin{itemize}
\item $\forall \lambda\in\Lambda\quad\P_{\lambda}(\overline{\tau}^0=+\infty)\ge \frac12$.
\item $\forall \lambda\in\Lambda\quad \phi(\lambda)=\E_{\lambda}[e^{\alpha \overline{\tau}^0}\1_{\{\overline{\tau}^0<+\infty\}}] \le 1/2$
\item there exist strictly positive constants $\alpha_0>0,\overline{C}$ such that for every $x,y\in\Zd$
$$\forall \alpha\in [0,\alpha_0]\quad\forall \lambda\in\Lambda\quad \ell(\lambda,\alpha,x,y)=\E_{\lambda}  [\1_{\{\overline{\tau}^{x}=+\infty\}} e^{\alpha \overline{\gamma}(\theta,x,y))}]\le 2 e^{\overline{C}\alpha \|x-y\|}.$$
\end{itemize}

\end{lemme}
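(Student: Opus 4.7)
I propose to obtain the three items simultaneously by using Lemma~\ref{domistoc} to tune $I$, $L$, $\delta$ so that the field $({}^xW^n_e)$ lies, under $\P_\lambda$, in a class $\mathcal{C}_d(2M_1+1,q)$ with $q$ as close to $1$ as one wishes, uniformly in $\lambda\in\Lambda$ and $x\in\Zd$. Once this is achieved, the first two items will fall directly out of Proposition~\ref{petitmomentexpo} and the third will follow by integrating the tail estimate of Proposition~\ref{lineairegamma}.

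Fix $M=2M_1+1$. Applying Proposition~\ref{petitmomentexpo} with $\epsilon=1/2$ produces $\alpha_1>0$ and $q_1<1$ such that every $\chi\in\mathcal{C}_d(M,q_1)$ satisfies $\chi(\bar\tau^0=+\infty)\ge 1/2$ and $\E_\chi[\1_{\{\bar\tau^0<+\infty\}}e^{\alpha_1\bar\tau^0}]\le 1/2$. Applying Proposition~\ref{lineairegamma} yields $q_0<1$ and positive constants $\theta$, $\tilde\alpha$, $A_0$, $B_0$ such that for every $\chi\in\mathcal{C}_d(M,q_0)$,
$$\chi\bigl(+\infty>\bar\gamma(\theta,x,y)>\tilde\alpha\|x-y\|_1+n\bigr)\le A_0 e^{-B_0 n}.$$
Setting $q=\max(q_0,q_1)$, Lemma~\ref{domistoc} fixes parameters $I$, $L$ and $\delta$ so that $({}^xW^n_e)$ lies in $\mathcal{C}_d(M,q)$ under $\P_\lambda$ for every $\lambda\in\Lambda$ and every $x\in\Zd$. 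For this choice of parameters, the first two items of the lemma are immediate consequences of the two bounds above.

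For the third item, the tail estimate forces $\bar\gamma(\theta,x,y)<+\infty$ almost surely on $\{\bar\tau^x=+\infty\}$, so the quantity $\ell(\lambda,\alpha,x,y)$ is finite to begin with. For any $\alpha\in(0,B_0)$, Fubini gives
$$\ell(\lambda,\alpha,x,y)\le 1+\alpha\int_0^{+\infty}e^{\alpha r}\P_\lambda\bigl(+\infty>\bar\gamma(\theta,x,y)>r\bigr)\,dr.$$
Splitting the integral at $r=\tilde\alpha\|x-y\|_1$, the lower part is bounded by $e^{\alpha\tilde\alpha\|x-y\|_1}$, and the upper part, controlled via the tail bound, contributes at most $\frac{\alpha A_0}{B_0-\alpha}e^{\alpha\tilde\alpha\|x-y\|_1}$. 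Setting $\bar C=\tilde\alpha$ and choosing $\alpha_0\in(0,B_0)$ small enough so that $\frac{\alpha_0 A_0}{B_0-\alpha_0}\le 1$, one obtains $\ell(\lambda,\alpha,x,y)\le 2e^{\bar C\alpha\|x-y\|_1}$ for every $\alpha\in[0,\alpha_0]$. Finally, replacing $\alpha_0$ by $\min(\alpha_0,\alpha_1)$ if necessary makes the same $\alpha$ work in the second item too.

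The main (minor) obstacle is bookkeeping: one must choose $q\ge\max(q_0,q_1)$ before invoking Lemma~\ref{domistoc}, so that a single triple $(I,L,\delta)$ simultaneously supplies the supercriticality needed by Propositions~\ref{petitmomentexpo} and~\ref{lineairegamma}, with uniformity in $\lambda\in\Lambda$ and in the site $x$ defining the field $({}^xW^n_e)$. The substantive work has already been done in Lemma~\ref{domistoc}; the present lemma is a clean packaging of it.
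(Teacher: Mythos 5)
Your proof is correct and takes essentially the same route as the paper: tune $I,L,\delta$ via Lemma~\ref{domistoc} so the field lies in a class $\mathcal{C}_d(2M_1+1,q)$ with $q$ large, read off the first two items from Proposition~\ref{petitmomentexpo}, and convert the tail bound of Proposition~\ref{lineairegamma} into an exponential moment for the third. Two small points in your favour: you are explicit about taking $q\ge\max(q_0,q_1)$ so one choice of parameters serves both propositions, and your Fubini computation sidesteps the paper's slightly careless claim that an exponential law stochastically dominates $(\bar\gamma-C\|x-y\|_1)\1_{\{\bar\gamma<\infty\}}$ (which, as stated, fails for small values because of the prefactor $A$ in the tail bound and really needs a constant shift).
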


\begin{proof}
By Lemma~\ref{petitmomentexpo}, we know that there exist $q<1$ and $\alpha>0$ such that we have
$$\E [e^{\alpha \overline{\tau}^{\bar{0}}}\1_{\{\overline{\tau}^{\bar{0}}<+\infty\}}]\le 1/2$$
for each field in $\mathcal{C}(2M_1+1,q)$.
By  Lemma~\ref{domistoc}, we can choose $I,L,\delta$ such
that $(^xW_e^n)_{(e,n) \in\Edo\times\N^*} \in \mathcal{C}(2M_1+1,q),$
which gives the two first points.
Then, from Lemma~\ref{lineairegamma}, we get constants $A,B,C$
such that for every $x,y\in\Zd$, every $n\ge 0$ and each $\lambda\in\Lambda$, we have
$$\P_{\lambda}(+\infty>\overline{\gamma}(\theta,x,y)> C\|x-y\|_1+n)\le Ae^{-Bn}.$$
We can then find $B'>0$ independent from $x$ and $\lambda$ such that the Exponential law with parameter $B'$ stochastically dominates $(\overline{\gamma}(\theta,x,y)-C\|x-y\|_1)\1_{\{\overline{\gamma}(\theta,x,y)<+\infty\}}$.
Let then $\alpha\le B'/2$: we have
\begin{eqnarray*}
\ell(\lambda,\alpha,x,y) &  =  &e^{\alpha C\|x-y\|_1}\E_{\lambda}[\1_{\{\overline{\tau}^x=+\infty\}}e^{\alpha ((\overline{\gamma}(\theta,x,y)-C\|y-x\|_1))}]\\
& \le & e^{\alpha C\|x-y\|_1} \frac{B'}{B'-\alpha}\\
& \le & 2 e^{\alpha C\|x-y\|_1}.
\end{eqnarray*}
\end{proof}

\subsubsection{Proof of Theorem~\ref{lemme-pointssourcescontact}}

We first choose  $I,L, \delta$ in order to satisfy the inequalities of Lemma~\ref{controledep}, and we let $T=C_1L$.

We use a restart argument. The idea is as follows: fix $\lambda \in \Lambda$ and $x \in \Zd$; if the lifetime  $\tau^0$ of the contact process in random environment is infinite, then one can find by the restart procedure an instant $T_K$ such that 
\begin{itemize}
\item $\xi^0_{T_K}$ contains an area $z+[-2L,2L]^d$, which allows to activate a block oriented percolation, as defined in the previous subsection, from some  $\bar{z}_0\in \Zd$ such that $2\bar{z}_0L+[-L,L]^d \subset z+[-2L,2L]^d$,
\item the block oriented percolation  issued from $\bar{z}_0$ infinitely survives.
\end{itemize}
Then, with Lemma~\ref{lineairegamma}, we give a lower bound for the proportion of time when $\bar{x}_0=[x]$ is occupied by descendents having themselves infinite progeny. By the definition of good events, this will allow to bound from below  the measure of $\{t\ge 0; (0,0)\to (x,t)\to \infty\}$ in the contact process. Indeed, recall that the definition of the event $A(\bar{x}_0,u,x_0,\{x\})$ targets $\{x\}$ and ensures that each time the site $\bar{x}_0=[x]$ is occupied in the macroscopic oriented percolation, then the contact process occupies the site $2L\bar{x}_0+\{x\}=x$ during $\delta$ units of time.

\medskip
\noindent
\underline{Definition of the restart procedure.}
We define the following stopping times: for each non-empty subset $A\subset\Zd$, 
\begin{eqnarray*}
U^A & = & \left\{ 
\begin{array}{l}
T \text{ if } \forall z \in \Zd \; z+[-2L,2L]^d \not\subset \xi^A_T, \\
T(1+\bar{\tau}^0 \circ T_{2\bar{x}^AL} \circ \theta_{T}) \text{ otherwise}\\
\quad \quad \quad \text{with } \bar{x}^A=\inf\{\bar{m} \in \Zd: \; 2\bar{m}L+[-L,L]^d\subset \xi^A_T\}
\end{array}
\right.  \\
\text{and }U^\varnothing & = & +\infty.
\end{eqnarray*}
In other words, starting from a set $A$, we ask if the contact process contains an area in the form  $2\bar{m}L+[-L,L]^d$ at  time $T$, : if the answer is no, we stop, otherwise we consider the lifetime of the macroscopic  percolation issued from the macroscopic site corresponding to that area. Particularly,  if $A \neq \varnothing$ and $U^A=+\infty$, then there exists, at time $T$, in the contact process issued from $A$, an area  $2\bar{x}^AL+[-L,L]^d$ which is fully occupied, and such that the macroscopic oriented percolation issued from thae  macroscopic site $\bar{x}^A$ percolates. We then search in that infinite cluster not too large a time when the proportion of individuals living at $\bar{x}_0=[x]$ and having infinite progeny becomes sufficiently large: 
if $A \neq \varnothing$ and $U^A=+\infty$, we note 
$$R^A=R^A(x)=
\left\{
\begin{array}{ll}
T(1+\overline{\gamma}(\theta,\bar{x}^A,\bar{x}_0)) & \text{ if }A \neq \varnothing \text{ and }U^A=+\infty; \\
0 & \text{ otherwise }.
\end{array}
\right.
$$
Thus, when $U^A=+\infty$, the variable  $R^A$ represents the first time (in the scale of the contact process, not that of the macroscopic oriented percolation) when  the proportion of individuals living at $\bar{x}_0=[x]$ and having infinite progeny becomes sufficiently large.

\medskip
\noindent
\underline{Estimates for the restart procedure.}

\begin{lemme}
There exist constants $\alpha>0$, $q>0$, $c<1$, $A',h>0$ such that for each $\lambda \in \Lambda$, each $A\subset \Zd$, and each $x \in \Zd$,
\begin{eqnarray}
\P_{\lambda}(U^A=+\infty) & \ge & q; \label{restart-q} \\
\E_{\lambda}[\exp(\alpha U^A)\1_{\{U^A<+\infty\}}] & \le & c; \label{restart-c} \\
\E_{\lambda}[\exp(\alpha R^A(x))\1_{\{U^A=+\infty\}}]  & \le &  A'e^{\alpha h(\|\bar{x}_0\|_\infty+ \|A\|_\infty)}. \label{restart-A}
\end{eqnarray}
\end{lemme}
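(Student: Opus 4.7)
The three estimates will all be proved by decomposing on the $\mathcal{F}_T$-measurable event
$B = \{\exists z \in \Zd: \, z+[-2L,2L]^d \subset \xi^A_T\}$
and then invoking the strong Markov property at the deterministic time $T$. On $B$, the index $\bar{x}^A$ is well-defined and $\mathcal{F}_T$-measurable, and the $\theta_T$-shifted Poisson family is, conditionally on $\mathcal{F}_T$, independent of $\mathcal{F}_T$ and equidistributed. Hence, conditionally on $\{\bar{x}^A=\bar{m}\}$, the variables $\bar{\tau}^0\circ T_{2\bar{x}^A L}\circ \theta_T$ and $\bar{\gamma}(\theta,\bar{x}^A,\bar{x}_0)$ are distributed as $\bar{\tau}^{\bar{m}}$ and $\bar{\gamma}(\theta,\bar{m},\bar{x}_0)$ under $\P_\lambda$, so the uniform-in-$\lambda$ bounds of Lemma~\ref{controledep} apply pointwise in $\bar{m}$.

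For~\eqref{restart-q}, since $\{U^A=+\infty\}=B\cap\{\bar{\tau}^{\bar{x}^A}\circ\theta_T=+\infty\}$, the first bullet of Lemma~\ref{controledep} gives $\P_\lambda(U^A=+\infty)\ge \tfrac12\P_\lambda(B)$, so it suffices to bound $\P_\lambda(B)$ from below uniformly in $A\ne\varnothing$ and $\lambda\in\Lambda$. For any $a\in A$, monotonicity of the Harris construction in the initial set together with translation invariance and monotonicity in the rates yields
\[\P_\lambda(B)\ge \P_\lambda(\xi^{\{a\}}_T\supset a+[-2L,2L]^d)=\P_{a.\lambda}(\xi^{\{0\}}_T\supset[-2L,2L]^d)\ge \P_{\lambda_{\min}}(\xi^{\{0\}}_T\supset[-2L,2L]^d),\]
and this last quantity is a fixed positive constant $2q$, witnessed by a direct Harris construction (infect $[-2L,2L]^d$ from $0$ within a bounded time, then demand no recoveries in this box before time $T$).

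For~\eqref{restart-c}, split $\{U^A<+\infty\}$ into $B^c$ (on which $U^A=T$) and $B\cap\{\bar{\tau}^{\bar{x}^A}\circ\theta_T<+\infty\}$ (on which $U^A=T(1+\bar{\tau}^{\bar{x}^A}\circ\theta_T)$). The Markov property and the second bullet of Lemma~\ref{controledep} give, for $\alpha$ such that $\alpha T$ lies in the regime of that lemma,
\[\E_\lambda[e^{\alpha U^A}\1_{U^A<+\infty}]\le e^{\alpha T}\bigl[\P_\lambda(B^c)+\tfrac12\P_\lambda(B)\bigr]\le e^{\alpha T}(1-q).\]
Shrinking $\alpha$ further so that $e^{\alpha T}(1-q)<1$ yields~\eqref{restart-c} with $c=e^{\alpha T}(1-q)$.

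For~\eqref{restart-A}, on $\{U^A=+\infty\}$ one has $R^A=T(1+\bar{\gamma}(\theta,\bar{x}^A,\bar{x}_0))$; Markov at $T$ together with the third bullet of Lemma~\ref{controledep} gives, for $\alpha T\le\alpha_0$,
\[\E_\lambda[e^{\alpha R^A}\1_{U^A=+\infty}]\le 2e^{\alpha T}\,\E_\lambda\bigl[\1_B\exp(\bar{C}\alpha T\|\bar{x}^A-\bar{x}_0\|)\bigr].\]
The finite-speed bound~\eqref{richard} takes care of $\|\bar{x}^A\|$: on $\{\xi^A_T\subset A+B_{MT}\}$ one has $\|\bar{x}^A\|_\infty\le(\|A\|_\infty+MT+L)/(2L)$, and the complementary contribution is handled by iterating~\eqref{richard} to produce an exponential tail on the radius of $\xi^A_T$, which is absorbed by choosing $\alpha$ small enough. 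This yields~\eqref{restart-A} with $h=\bar{C}/(2L)$, additive constants being folded into $A'$. The main obstacle is precisely this last step: the uniform-in-$A,\lambda$ control of $\|\bar{x}^A\|$ at an exponential rate $\bar{C}\alpha T$ requires delicately balancing the exponential tail from~\eqref{richard} against the growth of $e^{\bar{C}\alpha T\cdot r}$ in $r$, which forces a small-$\alpha$ choice compatible with the constraints already imposed for~\eqref{restart-c}.
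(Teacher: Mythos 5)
Your proof follows the paper's argument almost step for step: decompose on the $\mathcal{F}_T$-measurable event that $\xi^A_T$ contains a fully occupied box of side $4L+1$, apply the Markov property at time $T$, and then invoke the uniform-in-$\lambda$ bounds of Lemma~\ref{controledep} on the macroscopic percolation. The only cosmetic difference is in how you phrase the control of $\|\bar{x}^A\|_\infty$ at the end: you say ``iterating~\eqref{richard},'' whereas the paper instead directly invokes the stochastic comparison with Richardson's model to get the uniform tail $\P_{\lambda_{\max}}(\|\xi^0_T\|_\infty \ge MT+t)\le e^{-\beta t}$, which is what is actually needed (\eqref{richard} alone gives a bound at the $t$-scale, not a tail in the radius at a fixed time $T$); but since you explicitly identify the need to balance this exponential tail against $e^{\bar C\alpha T r}$, the intent is clearly the same and the proof is correct.
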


\begin{proof}
We easily get  (\ref{restart-q}) from a stochastic comparison:  for each $\lambda \in \Lambda$ and each  non-empty $A$,
$$
\P_{\lambda}(U^A=+\infty)  \ge   \P_{\lambda_{\text{min}}}([-2L,2L]^d \subset\xi^0_T)\P(\bar{\tau}^0=+\infty)=q>0.
$$
Now, if $\alpha>0$ , $A \subset \Zd$ is non-empty and $\lambda \in \Lambda$, we have with Lemma~\ref{controledep},
\begin{eqnarray*}
&& \E_{\lambda}[\exp(\alpha U^A)\1_{\{U^A<+\infty\}}]   \\
& = & e^{\alpha T}\left( 1-\P_{\lambda}(\exists z \in \Zd, \; z+[-2L,2L]^d \subset \xi^A_T)\left(1- \E[e^{\alpha T \bar{\tau}^0} \1_{\{\bar{\tau}^0<+\infty\}}]\right)\right) \\
& \le & e^{\alpha T}\left( 1-\frac12\P_{\lambda}(\exists z \in \Zd, \; z+[-2L,2L]^d \subset \xi^A_T) \right) \\
& \le & e^{\alpha T}\left( 1-\frac12\P_{\lambda_{\text{min}}}(\exists z \in \Zd, \; z+[-2L,2L]^d \subset \xi^A_T) \right)=c <1
\end{eqnarray*}
provided that $\alpha>0$ is small enough; this proves (\ref{restart-c}).

By the strong Markov property and Lemma~\ref{controledep}, if we choose $\alpha>0$ small enough, then for each $\lambda \in \Lambda$,
\begin{eqnarray}
&& \E_{\lambda} [ \exp(\alpha R^A) \1_{\{U^A=+\infty\}} | \mathcal{F}_{T} ]  \nonumber \\
& = & \1_{\{\exists z \in \Zd, \; z+[-2L,2L]^d \subset \xi^A_T\}} e^{\alpha T} \E[ \exp(\alpha T \overline{\gamma}(\theta,\bar{x}^A,\bar{x}_0))\1_{\{\bar{\tau}^{\bar{x}^A}=+\infty\}}]\nonumber\\
& \le & 2 e^{\alpha T} \exp(\overline{C} \alpha T  \|\bar{x}^A-\bar{x}_0\|_\infty) \nonumber \\
 &\le &  2 e^{\alpha T(1+\overline{C}\|\bar{x}_0\|_\infty)}\exp(\overline{C} \alpha T  \|\xi^A_T\|_\infty). \label{laun}
\end{eqnarray}
We use the comparison with Richardson's model to bound the mean of the last term: let us choose the positive constants $M,\beta$ such that
$$\forall s,t\ge 0\quad \P_{\lambda_{\max}}(\|\xi^0_s\|_{\infty}\ge Ms+t)\le e^{-\beta t}.$$
Then,  for each  non-empty finite set $A$, each $t>0$, and each $\lambda \in \Lambda$,
\begin{eqnarray*}
\P_{\lambda}(\|\xi^A_T\|_{\infty}\ge 2\|A\|_\infty + MT+t) & \le & \P_{\lambda_{\max}}(\max_{a \in A} \|\xi^a_T -a \|_{\infty}\ge \|A\|_\infty+MT+t) \\
& \le & \Card{A} \P_{\lambda_{\max}}(\|\xi^0_T\|_{\infty}\ge MT+\|A\|_\infty+t)\\
& \le & \|A\|_\infty^d e^{-\beta (\|A\|_\infty+t)}\le \alpha' \exp(-\beta t). 
\end{eqnarray*}
Then, for  $\alpha$ small enough,
\begin{eqnarray}
\E_\lambda[\exp(\overline{C}\alpha T \|\xi^A_T\|_\infty)
] 
& \le & e^{\overline{C}\alpha T(2\|A\|_\infty + MT)} \left(1+\frac{\overline{C}\alpha T\alpha'}{\beta-\overline{C}\alpha T} \right) \nonumber \\
& \le & 2e^{\overline{C}\alpha T(2\|A\|_\infty + MT)}. \label{ladeux}
\end{eqnarray}
Inequality~(\ref{restart-A}) immediately follows from (\ref{laun}) and (\ref{ladeux}).
\end{proof}

\medskip
\noindent
\underline{Application of the restart lemma~\ref{restartabstrait}.}
Let  
\begin{eqnarray*}
T_0=0 \text{ and } T_{k+1} & = & 
	\begin{cases}
	+\infty & \text{if }T_k=+\infty\\
	T_k+U^{\xi_0^{T_k}} \circ \theta_{T_k} & \text{otherwise;}
	\end{cases} \\
K & = & \inf\{k\ge 0:\;T_{k+1}=+\infty\}.
\end{eqnarray*}
The restart lemma, applied with $T^.=G^.=U^.$ and $F^.=0$,  ensures that 
\begin{eqnarray*}
\E_\lambda[\exp(\alpha T_K)] & \le &  \frac{A'}{1-c}.
\end{eqnarray*}
Applying now the restart lemma with $G^.=0$ and $F^.=R^.$, we get  that 
\begin{eqnarray*}
 \E_\lambda[\exp(\alpha (R^{\xi_0^{T_K}} \circ\theta_{T_{K}}-(h\|\bar{x}_0\|_\infty+\|\xi_0^{T_K}\|_\infty )))] 
& \le & \frac{A'}{1-c} .
\end{eqnarray*}
Particularly, it holds that for each $s>0$ and $t>0$,
\begin{eqnarray}
\P_\lambda(T_K>s) & \le & \frac{A'}{1-c} \exp(-\alpha s); \label{queueTK}\\
\P_\lambda
\left(
\begin{array}{c}
R^{\xi_0^{T_K} \circ\theta_{T_{K}}}\ge t/2, \\ 
T_K \le s, \; H^0_s \subset B^0_{Ms}
\end{array}
\right) & \le & 
\frac{A'}{1-c} \exp(\alpha (h (\|\bar{x}_0\|_\infty+Ms )- t/2))  \label{queueR}.
\end{eqnarray}
On the event $\{\tau^0=+\infty\}$, one can be sure that the contact process is non-empty at each step of the restart procedure : the restart Lemma ensures that  at time  $T_K+T$, one can find some area from which the directed block percolation  percolates, and, by construction, that for every $t \ge T_K+R^{\xi_0^{T_K} \circ\theta_{T_{K}}}$,
$$\Leb(\{s \in [T_K+T,t]: \; (0,0) \to(x,s)\to \infty \})\ge \delta \theta \text{Int}(\frac{t-(T_K+T)}{T})\ge  \frac{\delta \theta}{2T} t$$
as soon as $T_K\le t/2-1$.

Let  $C=\frac{2h}L$. Let now be $x \in \Zd$, and $t \ge C\|x\|_\infty$.
\begin{eqnarray*}
& & \P_\lambda \left(\tau^0=+\infty, \Leb(\{s \in [0,t]: \; (0,0) \to(x,s)\to \infty \})<\frac{\delta \theta}{2T} t \right) \\
& \le & \P_\lambda(T_K> t/2-1)+\P_\lambda(T_K\le  t/2-1, \; t<T_K+R^{\xi_0^{T_K}} \circ\theta_{T_{K}})\\
& \le & \P_\lambda(T_K> t/2-1)+\P_\lambda(R^{\xi_0^{T_K}} \circ\theta_{T_{K}}>t/2).
\end{eqnarray*}
We control the first term with (\ref{queueTK}). For the second one, we take $s=\frac{t}{8hM}$:
\begin{eqnarray*}
 &&\P_\lambda(R^{\xi_0^{T_K}} \circ\theta_{T_{K}}>t/2) \\
& \le & \P_\lambda(R^{\xi_0^{T_K} \circ\theta_{T_{K}}}> t/2, \; T_K \le s, \; H^0_s \subset B^0_{Ms}) + \P_\lambda(T_K > s)+\P_\lambda(H^0_s \not\subset B^0_{Ms}).
\end{eqnarray*}
We control the last two terms  with (\ref{queueTK}) and (\ref{richard}); for the first one, we use (\ref{queueR}): since 
 $\|\bar{x}_0\|_\infty \le \frac1{2L} \|x\|_\infty+1$, 
 \begin{eqnarray*}
 \P_\lambda \left(
\begin{array}{c}
R^{\xi_0^{T_K} \circ\theta_{T_{K}}}> t/2, \\
 T_K \le s, \; H^0_s \subset B^0_{Ms}
\end{array}
\right) & \le & 
 \frac{A'}{1-c} \exp(\alpha (h (\|\bar{x}_0\|_\infty+Ms )- t/2)) \\
 & \le & \frac{A'e^{\alpha h}}{1-c} \exp\left( \alpha \left(   \left( \frac{h}{2L}\|x\|_\infty-\frac{t}4\right) -\frac{t}{8} \right) \right)\\
  & \le & \frac{A'e^{\alpha h}}{1-c}\exp(-\alpha t/8),
 \end{eqnarray*}
which concludes the proof.

\section{Lower large deviations}
\subsection{Duality}

We have seen that the hitting times $\sigma(nx)$ enjoy superconvolutive properties. In a deterministic frame, Hammersley~\cite{MR0370721} has proved 
that the superconvolutive property allows to express the large deviation functional in terms of the moments generating function, as in Chernoff's Theorem. 
We will see that this property also holds in an ergodic random environment.
The following proof is inspired by Kingman~\cite{MR0438477}.

\begin{proof}[Proof of Theorem~\ref{LDP}]
Since
$\{t(x)\le t,\; \tau^x\circ \theta_{t(x)}=+\infty\}\subset\{\sigma(x)\le t\}\subset\{t(x)\le t\},$ the Markov property ensures that
$$\Pbarre_{\lambda}(t(x)\le t)\P_{\lambda}(\tau^x=+\infty)\le\Pbarre_{\lambda}(\sigma(x)\le t)\le\Pbarre_{\lambda}(t(x)\le t).$$
Thus, letting $R=-\log \P_{\lambda_{\min}}(\tau^0=+\infty)$,
we have
\begin{equation}
\label{equationunun}
 -\log(\Pbarre_{\lambda}(t(x)\le t))\le -\log(\Pbarre_{\lambda}(\sigma(x)\le t))\le -\log(\Pbarre_{\lambda}(t(x)\le t))+R.
\end{equation}
Similarly, 
$$ \E_{\lambda}[e^{-t(x)}] \ge  \E_{\lambda}[e^{-\theta\sigma(x)}]  \ge  \E_{\lambda}[\1_{\{\tau^x\circ\theta_{t(x)}=+\infty\}} e^{-\theta t(x)}]=\E_{\lambda}[ e^{-\theta t(x)}]\P_{\lambda}(\tau^x=+\infty),$$
which leads to 
\begin{equation}
\label{equationquatrequatre}
 -\log \Ebarre_{\lambda}[e^{-t(x)}] \le -\log \Ebarre_{\lambda}[e^{-\theta\sigma(x)}]  \le  -\log \Ebarre_{\lambda}[ e^{-\theta t(x)}]+R.
\end{equation}
Then, having a large deviation principle in mind, working with $\sigma$ or $t$ does not matter. We will work here with  $\sigma$, which gives simpler 
relations. We know that
\begin{equation}\\
\label{sousaddd}
 t((n+p)x)\le \sigma(nx)+\sigma(px)\circ\tilde{\theta}_{nx},
\end{equation}
that $\sigma(nx)$ and $\sigma(px)\circ\tilde{\theta}_{nx}$ 
are independent under $\Pbarre_{\lambda}$, and that the law of
$\sigma(px)\circ\tilde{\theta}_{nx}$  under $\Pbarre_{\lambda}$ is the law of $\sigma(px)$ under $\Pbarre_{nx.\lambda}$ (see Proposition~\ref{magic}). Then
\begin{equation}
\label{equationdeuxdeux}
-\log \Pbarre_{\lambda}(t((n+p)x)\le nu+pv)\le -\log \Pbarre_{\lambda}(\sigma(nx)\le nu)-\log \Pbarre_{nx.\lambda}(\sigma(px)\le pv).
\end{equation}
Let $g_n^{x}(\lambda,u)=-\log \Pbarre_{\lambda}(\sigma(nx)\le nu)+R\text{ and }G_n^{x}(u)=\int_{\Lambda} g^{x}_{n}(\lambda,u)\ d\nu(\lambda)$.
Inequalities~(\ref{equationunun}) and (\ref{equationdeuxdeux}) ensure that
\begin{equation}
\label{equationtroistrois}
 g^{x}_{n+p}(\lambda,u)\le g^{x}_n(\lambda,u)+g^{x}_p( T_{x}^n\lambda,u).
\end{equation}
Since $0\le g^{x}_1(\lambda,u) \le  -\log \Pbarre_{\lambda_{\min}}(\sigma(x)\le u)+R<+\infty$, Kingman's subadditive ergodic theorem ensures that
$\frac{g^{x}_{n}(u,\lambda)}{n}$ converges to 
$$\Psi_x(u)=\inf_{n\ge 1}\frac1{n} G^x_n(u)=\lim_{n\to+\infty} \frac1{n}G^x_n(u).$$
for  $\nu$-almost every $\lambda$.

Note that (\ref{equationtroistrois}) ensures that for every $n,p\in\N$ and every $u,v>0$, 
$$\Psi_x\left(\frac{nu+pv}{n+p}\right)\le \frac1{n+p}G^x_{n+p}\left(\frac{nu+pv}{n+p}\right)\le \frac{n}{n+p} \frac{G^x_n(u)}{n}+\frac{p}{n+p} \frac{G^x_p(v)}{p}.$$
Let $\alpha \in ]0,1[$. Since $\Psi_x$ is non-increasing, considering  some sequence  $n_k,p_k$ such
$\frac{n_k}{n_k+p_k}$ tends to $\alpha$ from above, we get
$$\Psi_x(\alpha u+(1-\alpha)v)\le \alpha\Psi_x(u)+(1-\alpha)\Psi_x(v).$$
So $\Psi$ is convex.

Similarly, let $h_n^{x}(\lambda,\theta)=-\log \Ebarre_{\lambda}[e^{-\theta \sigma(nx)}]+R\text{ and }H^x_n({\theta})=\int h^{x}_{n}(\lambda,\theta)\ d\nu(\lambda)$. As previously, with (\ref{equationquatrequatre}) and the subadditive relation~(\ref{sousaddd}), 
we have
\begin{eqnarray*}\Ebarre_{\lambda}[e^{-\theta\sigma((n+p)x)}]&\ge & e^{-R}\Ebarre_{\lambda}[ e^{-\theta t((n+p)x)}]\\
& \ge & e^{-R}\Ebarre_{\lambda}[ e^{-\theta (\sigma(nx)+\sigma(px)\circ\tilde{\theta}_{nx})}]
 =  e^{-R}\Ebarre_{\lambda}[ e^{-\theta \sigma(nx)}]\Ebarre_{\lambda}[e^{-\theta\sigma(px)}],
\end{eqnarray*}
and then the inequality
$$h^{x}_{n+p}(\lambda,{\theta})\le h^{x}_n(\lambda,{\theta})+h^{x}_p (T_{x}^n\lambda,{\theta}).$$
Since  $0\le h^{x}_1(\lambda,{\theta})\le  -\log \Ebarre_{\lambda_{\min}}[e^{-\theta\sigma(x)}]<+\infty$, Kingman's subadditive ergodic theorem ensures that for $\nu$-almost every $\lambda$,
$\frac{h^{x,{\theta}}_{n}(\lambda,{\theta})}{n}$ converges to
$$K_x({\theta})=\inf_{n\ge 1}\frac1{n} H^x_n({\theta})=\lim_{n\to+\infty} \frac1{n}H^x_n({\theta}).$$
Let now  $\theta\ge 0$ and $u>0$. By the Markov inequality, we observe that
\begin{eqnarray*}
 \Pbarre_{\lambda}(\sigma(nx)\le nu)\le e^{\theta nu}\Ebarre_{\lambda}[e^{-\theta\sigma(nx)}], & \ie & -g_n^{x}(.,u)\le\theta nu-h_n^{x,\theta}(.,u), \nonumber \\
& \ie & G_n^x(u)\ge -\theta nu +H_n(\theta), \nonumber \\
& \ie & \Psi_x(u)\ge-\theta u+K_x(\theta).
\end{eqnarray*}
Thus, we easily get
\begin{eqnarray}
\label{lune}
\forall u>0\quad \Psi_x(u)& \ge& \sup_{\theta\ge 0}(K_x(\theta)-\theta u), \\
 \label{lunea} \forall \theta>0 \quad K_x(\theta) & \le & \inf_{u>0} (\Psi_x(u)+\theta u).
\end{eqnarray}
It remains to prove both reversed inequalities.
Let us first prove
\begin{equation}
\label{lautre}
\forall \theta>0 \quad K_x(\theta)\ge \inf_{u>0} \{\Psi_x(u)+\theta u\}.
\end{equation}
Let $\theta>0$. Define $M=\miniop{}{\inf}{u>0} \{\Psi_x(u)+\theta u\}$ and note that for each $u$ and each integer $n$
$$G_n^x(u)+n\theta u\ge n\Psi_x(u)+n\theta u\ge nM.$$
Fix $\epsilon>0$.
Define $E_{n,\epsilon}=\{\lambda: g_n^{x,u}(\lambda)\ge G^x_n(u)-n\epsilon\}$.
We have
 \begin{eqnarray*} 
H^x_n(\theta)  
& \ge & \int_{E_{n,\epsilon}} h^x_n(\theta)\ d\nu(\lambda) 
  =  \int_{E_{n,\epsilon}} (R-\log \Ebarre_{\lambda}[e^{-\theta \sigma(nx)})]\ d\nu(\lambda)\\
&  =  & \int_{E_{n,\epsilon}} -\log \left[\int_0^{+\infty}n\theta e^{-\theta nu}e^{-R}\Pbarre_{\lambda}(\sigma(nx)<nu)\ du\right]\ d\nu(\lambda).
\end{eqnarray*}
For every $\lambda\in E_{n,\epsilon}$ and $b>0$, one has 
 \begin{eqnarray*}\int_0^{+\infty} n\theta e^{-\theta nu}e^{-R}\Pbarre_{\lambda}(\sigma(nx)<nu)\ du
& \le & e^{-\theta n b}+ \int_0^{b} n\theta e^{-\theta nu}e^{-R}\Pbarre_{\lambda}(\sigma(nx)<nu)\ du\\
& \le & e^{-\theta n b}+\int_0^{b} n\theta e^{-\theta nu}e^{-g_n^{x,u}(\lambda)}\ du\\
& \le & e^{-\theta n b}+\int_0^{b} n\theta e^{-\theta nu}e^{-G^x_n(u)+n\epsilon  }\ du\\
& \le & e^{-\theta n b}+n\theta be^{-n(M-\epsilon)}\\
& \le & (nM+1)e^{-n(M-\epsilon)}\quad\text{with }b=M/\theta.
\end{eqnarray*}
Finally,
$$ \frac{H^x_n(\theta)}{n}  \ge \nu(E_{n,\epsilon})\left(-\frac{\log(1+nM)}n+M-\epsilon\right).$$
Since $\nu(E_{n,\epsilon})$ tends to $1$ when $n$ goes to infinity, one deduces that
$$K_x(\theta)=\lim\frac1{n}H^x_n(\theta)\ge M-\epsilon.$$
Letting $\epsilon$ tend to $0$, we get~(\ref{lautre}).

Let us finally prove
\begin{equation}
 \label{lautreb}
\forall u>0\quad \Psi_x(u) \le  \sup_{\theta\ge 0}(K_x(\theta)-\theta u).
\end{equation}
Let $u>0$. It is sufficient to prove that there exists $\theta_u\ge 0$, with $\Psi_x(u)\le -\theta_u u+K_x(\theta_u)$.
Since $\Psi_x$ is convex and non-increasing, there exists a slope $-\theta_u\le 0$
such that $\Psi_x(v)\ge\Psi_x(u)-\theta_u(v-u)$. Then
$$ K_x(\theta_u)=\inf_{v>0} \{\Psi_x(v)+\theta_u v\} \ge \inf_{v>0} \{\Psi_x(u)-\theta_u(v-u)+\theta_u v\}
 \ge  \Psi_x(u)+\theta_u u,$$
which completes the proof of~(\ref{lautreb}) and of the reciprocity formulas.

The function $-K_x(-\theta)$  corresponds to $\Psi_x$ in the Fenchel-Legendre duality: therefore, it is  convex.
Particularly, the functions $\Psi_x$ and $K_x$ are continuous on  $]0,+\infty[$.
By the definition of $\Psi_x$ and $K_x$, there exists $\Lambda'\subset\Lambda$ with $\nu(\Lambda')=1$ and such that for each
$u\in\Q\cap (0,+\infty)$ and each $\theta\in\Q\cap  [0,+\infty)$, we have
\begin{eqnarray*}
&&  \lim_{n\to +\infty} -\frac1{n}\log \Pbarre_{\lambda}(\sigma(nx)\le nu)  =  \Psi_x(u), \\
\text{and } && \lim_{n\to +\infty} -\frac1{n}\log \Ebarre_{\lambda}e^{-\theta\sigma(nx)} =  K_x(\theta).
\end{eqnarray*}
Since the functions $\theta\mapsto h^{x,\theta}_n$ and $u\mapsto h^{x,u}_n$ are monotonic and their limits $\Psi_x$ and $K_x$ are continuous, it is easy to check that the convergences also hold for every
$\lambda\in\Lambda'$,  $u>0$ and $\theta\ge 0$.
\end{proof}

\subsection{Lower large deviations}

We prove here Theorem~\ref{dessouscchouette}. 
Remember that ${\P}(.)=\int_\Lambda {\P}_\lambda(.)\ d\nu(\lambda)$. 
The main step is actually to prove the following:
\begin{theorem}
\label{GDdessous}
Assume that $\nu=\nu_0^{\otimes \Ed}$ and that the support of $\nu_0$ is included in $[\lambda_{\text{min}}, \lambda_{\text{max}}]$.
For every $\epsilon>0$, there exist $A,B>0$ such that
$$\forall t \ge 0 \quad \P (\xi^0_t \not \subset (1+\epsilon)t A_\mu)\le A\exp(-Bt).$$
\end{theorem}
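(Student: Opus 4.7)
The plan is to reduce the claim to a pointwise exponential bound on $\P(t(x) \le t)$ for sites $x$ with $\mu(x) > (1+\epsilon)t$, and then establish this bound by combining a crude Poisson path-counting estimate for very fast infections with a dynamic renormalization argument for the remaining regime. For the reduction, I would apply~\eqref{richard} to restrict to the polynomially many sites in $B_{Mt}$, on which $\{\xi^0_t \not\subset (1+\epsilon)tA_\mu\}$ becomes $\{\exists x \in B_{Mt}, \; \mu(x) > (1+\epsilon)t, \; t(x) \le t\}$. Norm equivalence between $\mu$ and $\|\cdot\|$ forces such $x$ to satisfy $\|x\| \ge ct$, so a uniform estimate $\P(t(x) \le t) \le A e^{-B \|x\|}$ would conclude the proof, the polynomial factor $|B_{Mt}|$ being absorbed by the exponential decay.

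For the pointwise bound, set $\epsilon' = \epsilon/(1+\epsilon)$ so that $t \le (1-\epsilon')\mu(x)$. In the regime of very fast growth $t \le c_0 \|x\|_1$ with $c_0 < (2de\lambda_{\max})^{-1}$, a direct argument on the Harris construction suffices: the event $\{t(x) \le t\}$ forces at least $\|x\|_1$ birth events arranged in a nearest-neighbor chain, and the annealed bound $(\lambda_{\max} t)^n/n!$ per fixed path of length $n$, summed over the at most $(2d)^n$ paths, gives the claim by Stirling. In the intermediate regime $c_0 \|x\|_1 < t \le (1-\epsilon')\mu(x)$, I would tessellate $\Zd$ into cubes of macroscopic side $L$ and look at the sequence of macroscopic cubes visited by the infection along its route from $0$ to $x$. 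Declaring a macroscopic edge ``good'' if crossing the corresponding pair of cubes takes at least $(1-\eta)L$ times the expected microscopic passage time, the i.i.d.\ hypothesis on the environment makes the good/bad indicator field finite-range dependent. A finite-volume quantitative shape estimate, combining Theorem~\ref{theoGDUQ} at scale $L$ with a restart procedure ensuring local survival, shows that each edge is good with probability $1-\delta$ for $L$ large. The Liggett--Schonmann--Stacey domination theorem then reduces the picture to a Bernoulli field of large parameter, and a standard large-deviation estimate on the number of bad edges along any macroscopic path from $[0]$ to $[x]$ produces the required $A e^{-B\|x\|/L}$.

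The hard part will be formalizing the ``good crossing'' event at the macroscopic scale. The contact process may die locally inside a block, in which case the naive crossing time is undefined, so one must interpret the lower bound on crossing time modulo local extinction via a restart construction analogous to the one in Section~3, retaining only crossings that connect infinite descendants. A further delicate point is that the entrance point of each macroscopic block is random and correlated with the past: to obtain the finite-range dependence demanded by the Liggett--Schonmann--Stacey comparison, the good event must be tailored to depend only on a bounded neighborhood of the block, and the i.i.d.\ assumption on the environment is essential here to decouple the environmental randomness of distant blocks. Once these bookkeeping issues are settled and the scale $L$ is fixed with $\delta$ small enough, the remaining combinatorial estimate on macroscopic paths is routine.
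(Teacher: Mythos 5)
Your high-level strategy (renormalization, macroscopic path extraction, Liggett--Schonmann--Stacey domination) is the same one the paper uses, and the reduction to $B_{Mt}$ and the crude path-counting estimate for the very-fast regime correspond to~\eqref{richard} and Lemma~\ref{histoirelineaire}. But the sketch has genuine gaps exactly where the proof is hard, namely in the definition of the good block event and the argument that it has high probability.

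First, you invoke Theorem~\ref{theoGDUQ} ``at scale $L$'' to show that macroscopic crossings are slow with high probability. That theorem controls the probability that the process grows \emph{too slowly} (upper large deviations of $t(x)$, the inclusion $(1-\epsilon)tA_\mu\subset\tilde K_t'\cap\tilde H_t$). What you need here is the opposite inclusion $\xi^0_s\subset(1+\epsilon)sA_\mu$, i.e.\ control of the process being \emph{too fast} --- precisely the statement being proved. Theorem~\ref{theoGDUQ} gives no information on this. The paper instead uses the almost-sure asymptotic shape theorem (Proposition~\ref{thFA}), which does give $\xi^0_s\subset(1+\epsilon/2)sA_\mu$ eventually, together with ergodicity of the time shift $\theta_{-1}$ to convert a positive-probability event $\tilde A_2^{t_2}$ into a whp event over a window of time translates. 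This soft-argument replacement of a quantitative rate is the content of Lemma~\ref{catendvers12} and is absent from your sketch.

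Second, the extinction problem is not merely ``bookkeeping.'' You propose to retain only crossings that connect infinite descendants, but ``having infinite descendants'' is not a local event, so any good event defined in those terms loses the bounded-range dependence that LSS requires. The paper resolves this with a coupling-from-the-past argument inside Lemma~\ref{catendvers12}: one searches, over a window of negative times $-k$, for a single ancestor $(0,-k)$ that survives forever, grows at the right speed, and whose \emph{coupled region} $K'$ contains the whole space-time box $B_N$; every descendant of the box at time $4N$ is then automatically a descendant of $(0,-k)$, so the growth of the entire box is controlled by the growth of one point. This simultaneously handles the random entry point (the good event quantifies over all $(x_0,t_0)\in B_N$), the extinction issue, and the locality required by LSS. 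Without this device (or an explicit substitute), the renormalization cannot be closed. Relatedly, the paper's tessellation is of $\Zd\times\R_+$ into space-time boxes, and the good event combines a sharp $\mu$-ball containment at time $\alpha LN$ with a rough spatial confinement in $]-LN,LN[^d$ for intermediate times; your purely spatial tessellation with a crossing-time good event would need an analogous spatial confinement condition built in to be local, and that piece is not in the sketch.
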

Using the norm equivalence on $\Rd$, we introduce constants $C^-_\mu,C^+_\mu>0$ such that
\begin{equation}
 \label{normes}
\forall z \in \Rd \quad C^-_\mu\|z\|_\infty \le \mu(z) \le C^+_\mu \|z\|_{\infty}.
\end{equation}

Let $\alpha,L,N,\epsilon>0$. We define the following event, relatively to the space-time box $B_N=B_N(0,0)=[-N,N]^d\times[0,2N]$:
\begin{eqnarray*}
A^{\alpha,L,N,\epsilon}= \left\{\forall (x_0,t_0) \in B_N\quad 
  \xi^{x_0}_{\alpha L N-t_0}\circ \theta_{t_0} \subset x_0+(1+\epsilon)(\alpha L N-t_0)A_{\mu}\right\}\cap\\
 \left\{\forall (x_0,t_0) \in B_N
 \miniop{}{\cup}{0\le s\le\alpha L N-t_0}  \xi^{x_0}_{s}\circ \theta_{t_0} \subset ]-LN,LN[^d\right\}.
\end{eqnarray*}
The first part of the event ensures that the descendants, at time $\alpha L N$, of any point $(x_0,t_0)$ in the box $B_N$ are included in $x_0+(1+\epsilon)(\alpha L N)A_{\mu}$: it is a sharp control, requiring the asymptotic shape theorem. The second part ensures that the descendants, at all times in $[0,\alpha L N] $, of the whole box $B_N$ are included in $]-LN,LN[^d$: the bound is rough, only based on the (at most) linear growth of the process. 

We say that the box $B_N$ is good if  $A^{\alpha,L,N,\epsilon}$ occurs. We also define, for $k \in \Zd$ and $n \in \N$, the event $A^{\alpha,L,N,\epsilon}(k,n)=A^{\alpha,L,N,\epsilon} \circ T_{2kN} \circ \theta_{2nN}$ and we say that the box $B_N(k,n)$ is good if the event $A^{\alpha,L,N,\epsilon}(k,n)$ occurs. 

The proof of the lower large deviation inequalities is close to the one by Grimmett and Kesten~\cite{grimmett-kesten} for first passage-percolation. If a point $(x,t)$ is infected too early, it means that its path of infection has ``too fast'' portions when compared with the speed given by the asymptotic shape theorem. For this path, we build a sequence of boxes associated with path portions, and the existence of a ``too fast portion'' forces the corresponding box to be bad. But we are going to see that we can choose the parameters to ensure that 
\begin{itemize}
\item the probability under $\P$ for a box to be good is as close to $1$ as we want,
\item the events ``$B_N(k,0)$ is good'' are only locally dependent. 
\end{itemize}
We then conclude the proof by a comparison with independent percolation with the help of the Liggett--Schonmann--Stacey Lemma~\cite{LSS} and a control of the number of possible sequences of boxes.  

\begin{lemme}
\label{catendvers12}
We have
\begin{itemize}
\item The events $(\{B_N(k,0) \text{ is good}\})_{k\in \Zd}$ are identically distributed under~$\P$. 
\item There exists $\alpha>0$ such that for every $\epsilon \in (0,1)$, there exists an integer $L$ (that can be taken as large as we want) such that
$$\lim_{N \to +\infty} \P(A^{\alpha,L,N,\epsilon})=1.$$
\item 
If moreover $\nu=\nu_0^{\otimes \Ed}$, then the events  $(\{B_N(k,0) \text{ is good}\})_{k\in \Zd}$ are $(L+1)$-dependent under $\P$.
\end{itemize}
\end{lemme}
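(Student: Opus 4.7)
We treat the three assertions in turn, with the bulk of the work being assertion~2.

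Assertions~1 and~3 rest on stationarity and locality respectively. For assertion~1, the identity~\eqref{translationspatiale} combined with the stationarity of $\nu$ (Assumption~(E)) gives $\P \circ T_x^{-1} = \P$ for every $x \in \Zd$, so since $\{B_N(k,0)\ \text{good}\}$ is the image of $\{B_N(0,0)\ \text{good}\}$ under $T_{2kN}$, we obtain identical distributions. For assertion~3, we observe that $A^{\alpha,L,N,\epsilon}(k,0)$ is measurable with respect to the Poisson processes on edges and vertices lying in $2kN + ]-LN,LN[^d$ during $[0,\alpha LN]$, together with the environment variables $\lambda_e$ on those edges. Under Assumption~(E'), the environment variables are independent, and the Poisson processes are independent by construction, so two such events are independent whenever their spatial regions share no edge, i.e.\ whenever $\|k-k'\|_\infty \ge L$; this matches the $(L+1)$-dependence statement up to the boundary convention.

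Assertion~2 is the main point. Let $M$ be the linear speed from~\eqref{richard} and fix $\alpha \in (0,1/M)$. We split $(A^{\alpha,L,N,\epsilon})^c$ according to which defining clause fails. The rough-containment clause is controlled by~\eqref{richard} integrated over $\nu$: since $M\alpha < 1$, for $L$ large and any fixed starting point $(x_0,t_0)$, translation invariance gives a failure probability at most $A e^{-B\alpha LN}$, and a union bound over a discretization of $B_N$ of size $O(N^{d+1})$ still vanishes as $N \to +\infty$. The shape clause requires $\xi^{x_0}_{\alpha LN - t_0} \circ \theta_{t_0} \subset x_0 + (1+\epsilon)(\alpha LN - t_0) A_\mu$ for all $(x_0,t_0) \in B_N$; after discretizing $t_0$ to its integer part (using attractivity of the graphical construction and absorbing the unit-time slack via~\eqref{richard} and a small reduction of $\epsilon$) and applying translation invariance to reduce to $x_0 = 0$, this becomes a statement about $\P(\xi^0_s \not\subset (1+\epsilon') s A_\mu)$ for $s \in [(\alpha L - 2)N, \alpha LN]$.

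Using $\xi^0_s \subset \tilde H_s$ and decomposing on survival,
$$ \P\bigl(\xi^0_s \not\subset (1+\epsilon') s A_\mu\bigr) \le \P(s < \tau^0 < +\infty) + \Pbarre\bigl(\tilde H_s \not\subset (1+\epsilon') s A_\mu\bigr), $$
the first term is exponentially small by~\eqref{grosamasfinis}, and the second tends to $0$ by the upper inclusion in Proposition~\ref{thFA}. To close the $O(N^{d+1})$ union bound, the qualitative convergence of the second term must be upgraded into at least a polynomial tail, which follows from the exponential moment control of $\sigma$ from Theorem~\ref{theomomexpsigma} combined with Markov's inequality.

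The main technical obstacle is precisely this last step: turning the qualitative shape theorem into a rate fast enough to survive the union bound over $O(N^{d+1})$ discretized starting points. The attractivity of the contact process and the linear-speed bound~\eqref{richard} take care of the discretization, but a careful combination of the upper large deviation estimates of Section~3 with the shape theorem is needed to obtain the polynomial decay required; once this is in hand, the rest of the argument is routine.
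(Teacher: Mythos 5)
Assertions~1 and~3 in your proposal are fine and match the paper's one-line treatment (stationarity via $T_x$ and locality of the defining Poisson/environment data). The trouble is assertion~2, where there is a genuine gap and the paper takes a fundamentally different route.

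Your plan is: discretize $B_N$ to $O(N^{d+1})$ starting points, reduce by translation invariance to a single point, and then for each bound
$\P\bigl(\xi^0_s \not\subset (1+\epsilon') s A_\mu\bigr)$
sufficiently fast to survive the union bound. You correctly identify this quantitative rate as the crux, but the tools you invoke cannot deliver it. The event $\{\tilde H_s \not\subset (1+\epsilon')sA_\mu\}$ means some $x$ with $\mu(x) > (1+\epsilon')s$ is hit by time $s$, i.e.\ the process grows \emph{too fast}. Theorem~\ref{theomomexpsigma} and all of Section~3 control the opposite direction: $\sigma(x)$ being too \emph{large}, i.e.\ the process being too \emph{slow}. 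An exponential moment bound on $\sigma(x)$ gives you an upper tail for $\sigma(x)$, hence an upper tail for $t(x)$; it gives you nothing about the lower tail of $t(x)$, which is what ``too fast'' requires. Worse, a quantitative bound on $\P(\tilde H_s \not\subset (1+\epsilon')sA_\mu)$ is exactly the content of Theorem~\ref{GDdessous} and inequality~\eqref{decadix}, which are proved \emph{from} Lemma~\ref{catendvers12}; so the route you sketch is circular. The qualitative shape theorem (Proposition~\ref{thFA}) gives convergence but no rate, and without a rate the $O(N^{d+1})$ union bound cannot close.

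The paper's proof sidesteps the union bound entirely via a ``common ancestor from the past'' argument. It locates a time $-k$ (with $k$ of order $L_1 N$) and a site $(0,-k)$ that survives and whose coupled region $K'$ grows at least linearly, by combining $\Pbarre(\tilde A_2^t) \to 1$ (qualitative shape theorem at a \emph{fixed} time $t_2$) with ergodicity of the time shift $\theta_{-1}$ under $\P$ to ensure that such a good $k$ exists in $[L_1 N,\,(L_1+1)N-1]$ with probability $\to 1$. It then shows, using the rough linear-speed bound~\eqref{richard} (event $A_1^N$) and the lifetime dichotomy~\eqref{grosamasfinis} (event $A_3^N$), that every live descendant of $B_N$ at time $4N$ lies in the coupled region of $(0,-k)$ and hence is itself a descendant of $(0,-k)$. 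Thus control of the whole box $B_N$ is replaced by control of the single space-time point $(0,-k)$, for which the (unquantified) shape theorem suffices. This ``coupling from the past'' trick is what you are missing; without it or a genuinely independent quantitative lower-LD estimate, the union bound argument does not go through.
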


\begin{proof}
The first and last points are clear. Let us prove the second point.
The idea is to find a point $(0,-k)$, with $k$ large enough, such that
\begin{itemize}
\item the descendants of $(0,-k)$ are infinitely many and behave correctly (without excessive speed) 
\item the coupled region of $(0,-k)$ contains a set of points that is necessarily crossed by any infection path starting from the box $B_N$.
\end{itemize}
Indeed, this will allow to find, for all the descendants of $B_N$, a unique common ancestor, and thus to control the growth of all the descendants of $B_N$ by simply controlling the descendants of this ancestor. A control on a number of points of the order of the volume of $B_N$ will thus be replaced by a control on a single point. See Figure~\ref{uneautrefigure}.
Let $\epsilon>0$ be fixed. 

We first control the positions of the descendants of the box $B_N$ at time $4N$.
Let $A,B,M$ be the constants given by Proposition~\ref{propuniforme}. We recall that $\omega_x$, for $x\in \Zd$, and $\omega_e$, for $e \in \Ed$ are the Poisson point processes giving respectively the death times for $x$ and the potential infection times through edge $e$. We define, for every integer $N$:
\begin{eqnarray*}
\tilde{A}_1^N & = & \{H^0_{4N}\not \subset [-(4M+1)N,(4M+1)N]^d\}, \\
A_1^N & = & \left\{ \sum_{x \in [-N,N]^d}\int \1_{\{ \tilde{A}_1^N \circ T_x \circ \theta_t \} }\  d\left(\delta_0+\sum_{e \ni x}\omega_e\right)(t) =0 \right\}.
\end{eqnarray*}
Note in particular that
\begin{equation}
\label{inclusion1}
 A_1^N \subset  \left\{\forall (x_0,t_0) \in B_N\quad 
 \xi^{x_0}_{4N-t_0}\circ \theta_{t_0} \subset [-(4M+1)N,(4M+1)N]^d \right\} .
\end{equation}
We have with~\eqref{richard}, 
\begin{eqnarray*}
&& \E \left(\sum_{x \in [-N,N]^d}\sum_{e \ni x}\int_0^{2N} \1_{\{ \tilde{A}_1^N \circ T_x \circ \theta_t \} } \ d(\delta_0+\omega_e)(t) \right) \\
& \le & (2N+1)^d2d(1+2N\lambda_{\max}) \P_{\lambda_{\max}}(\tilde{A}_1^N) \\
& \le & (2N+1)^d2d(1+2N\lambda_{\max})A\exp(-4BN), 
\end{eqnarray*}
and thus, with the Markov inequality,
\begin{equation}
\label{probab1}
\lim_{N\to +\infty}   \P(A_1^N) =1.
\end{equation}
With~(\ref{inclusion1}), we deduce that with a large probability, if $N$ is large enough, the descendants of $B_N$ at time  $4N$ are included in $[-(4M+1)N,(4M+1)N]^d$.

Now, we look for points with a good growth (we will look for the common ancestor of $B_N$ among these candidates):
\begin{eqnarray*}
\tilde{A}_2^t & = & \{\tau^0=+\infty, \; \forall s\ge t\quad K'_s\supset (1-\epsilon)s A_{\mu}\text{ and } \xi^0_s\subset (1+\epsilon/2)sA_{\mu}\}, \\
{A}_2^{t,N} & = & \miniop{N-1}{\cup}{k=0} \tilde{A}_2^{t}\circ \theta_{-k}.
\end{eqnarray*}
The first event says that the point $(0,0)$ lives forever and has a good growth after time $t$ (at most linear growth, and at least linear growth for its coupled zone), while the second event says that there exists a point $(0,-k)$ with a good growth and such that $k \in [0..N-1]$.
Theorem 3 in Garet-Marchand~\cite{GM-contact} ensures that $\miniop{}{\lim}{t\to +\infty}\Pbarre(\tilde{A}_2^t)=1$. But
\begin{eqnarray*}
\P(\tilde{A}_2^t) & = & \int \P_{\lambda}(\tilde{A}_2^t) d\nu(\lambda)
  =  \int \Pbarre_{\lambda}(\tilde{A}_2^t)\P_{\lambda}(\tau^0=+\infty) d\nu(\lambda)\\
 & \ge & \int \Pbarre_{\lambda}(\tilde{A}_2^t)\P_{\lambda_{\min}}(\tau^0=+\infty) d\nu(\lambda)
  \ge  \P_{\lambda_{\min}}(\tau^0=+\infty)\Pbarre(\tilde{A}_2^t).
\end{eqnarray*}
So there exists $t_2$ such that $\P(\tilde{A}_2^{t_2})>0$.
As the time translation $\theta_{-1}$ is ergodic under $\P$, we get
\begin{equation}
\label{probab2}
\lim_{N \to +\infty} \P\left( {A}_2^{t_2,N}\right)=\lim_{n\to +\infty}\P\left( \miniop{n-1}{\cup}{k=0} \tilde{A}_2^{t_2}\circ \theta_{-k}\right)=1.
\end{equation}
In other words, with a large probability, if $N$ is large enough there exists $k \in [0..N-1]$ such that the point $(0,-k)$ has a good growth.

\begin{figure}[h!]
\begin{center}
\includegraphics[scale=0.5]{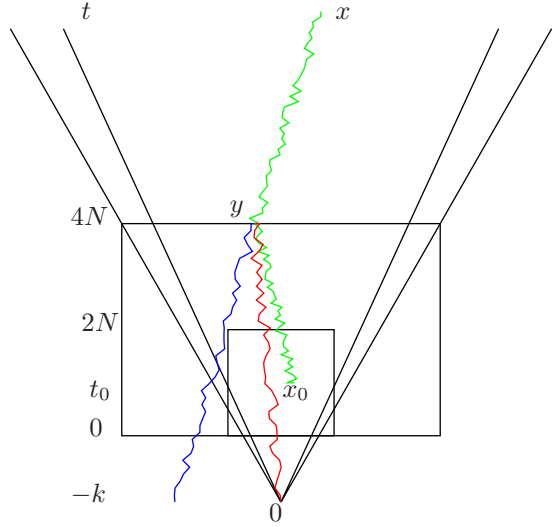}
\put(-230,10){$-k$}%
\put(-223,35){$0$}%
\put(-226,75){$2N$}%
\put(-230,115){$4N$}%
\put(-170,120){$y$}%
\put(-150,50){$x_0$}%
\put(-223,50){$t_0$}
\put(-155,3){$0$}%
\put(-130,193){$x$}%
\put(-226,193){$t$}%
\caption{Coupling from the past}
\label{uneautrefigure} 
\end{center}
\end{figure}
\noindent

Take $L_1=L_1(\epsilon)>0$ such that
\begin{equation}
\label{inclusion}\forall N\ge 1\quad (L_1+1)N(1-\epsilon)A_{\mu}\supset [-(4M+1)N,(4M+1)N]^d.
\end{equation}
Thus, if we find an integer $k\ge \max(t_2,L_1 N)$ such that
$A_{t_2}\circ \theta_{-k}$ occurs, then the descendants of the box $B_N$ at time  $4N$ are in the coupled region of $(0,-k)$.

Denote by $\overleftarrow{\tau}^y$ the life time of $(y,0)$ for the contact process when we reverse time. As the contact process is self-dual, $\overleftarrow{\tau}^y$ as the same law as $\tau^y$. Set
$$A_3^N=\left\{\forall y\in [-(4M+1)N,(4M+1)N]^d\quad  \overleftarrow{\tau}^y\circ{\theta}_{4N}=+\infty \text{ or } \overleftarrow{\tau}^y\circ{\theta}_{4N}<2N\right\}.$$
The control~(\ref{grosamasfinis}) of large lifetimes ensures that 
\begin{equation}
\label{probab3}
\lim_{N \to +\infty} \P(A_3^N)=1.
\end{equation}

Assume now that $N\ge t_2/L_1$. Thus $L_1N\ge t_2$.
Let us see that on $\displaystyle A_1^{N}\cap (A_2^{t_2,N}\circ \theta_{-L_1N})
 \cap A_3^N$, we have
\begin{equation}
\label{attrape}
\forall t\ge 4N \quad  \miniop{}{\cup}{(x_0,t_0)\in B_N} \xi_{t-t_0}^{x_0}\circ\theta_{t_0} \subset ((L_1+1)N+t)(1+\epsilon/2)A_{\mu}.
\end{equation}
Indeed, let $t\ge 4N$ and $x\in \Zd$ be such that  $(x,t)$ is a descendant of  $(x_0,t_0)\in B_N$. Let $(y,4N)$  be an ancestor of $(x,t)$ and a  descendant of $(x_0,t_0)$. On the event $A_1^{N}$, the point $y$ is in $[-4MN,4MN]^d$.
But, on $A_3^N$, the definition of $y$ ensures that $\overleftarrow{\tau}^y\circ{\theta}_{4N}=+\infty$: 
so $(y,4N)$ has a living ancestor a time $-k$, for each $k$ such that $L_1 N \le k \le (L_1+1) N-1$. 
But, on $A_2^{t_2,N}\circ \theta_{-L_1N}$, inclusion~(\ref{inclusion}) ensures that $(y,4N)$ is in the coupled region of $(0,-k)$ for one of these $k$, and so $(y,4N)$ is a descendant of this $(0,-k)$. 
Finally, $(x,t)$ is also a descendant of $(0,-k)$, and, always on  $A_2^{t_2,N}\circ \theta_{-L_1N}$,
$$\mu(x)\le (k+t)(1+\epsilon/2)\le ((L_1+1)N-1+t)(1+\epsilon/2),$$
which proves~(\ref{attrape}).

We then choose $\alpha \in (0,1)$ and an integer  $L$ such that
\begin{eqnarray*}
 \alpha & < & \frac{2C_\mu^-}{3} \le \frac{C_\mu^-}{1+\epsilon/2}, \label{alpha} \\
L & \ge & \max\left\{ \frac4{\alpha}, \; \frac{L_1+1}{ C_\mu^--\alpha(1+\epsilon/2)}, \; 4M+1, \; \frac{2}{\alpha \epsilon}((L_1+1)(1+\epsilon/2)+C_\mu^++2 \right\}. \label{L}
\end{eqnarray*}

If $N \ge t_2/L_1$, as $\alpha L N \ge 4N$, we can use~(\ref{attrape}) with $t\in[4N,\alpha LN]$; thus our choices for $\alpha,L$ and~(\ref{inclusion1}) ensure that on the event $\displaystyle A_1^{N}\cap (A_2^{t_2,N}\circ \theta_{-L_1N})  \cap A_3^N$, for every $ (x_0,t_0) \in B_N$
\begin{eqnarray*}
 \miniop{}{\cup}{4N\le s\le\alpha L N-t_0}  \xi^{x_0}_{s}\circ \theta_{t_0} \subset ((L_1+1+\alpha L)N)(1+\epsilon/2)A_{\mu} & \subset&  [-LN,LN]^d, \\
 \miniop{}{\cup}{0\le s\le\alpha 4N}  \xi^{x_0}_{s}\circ \theta_{t_0} \subset [-(4M+1)N,(4M+1)N]^d& \subset&  [-LN,LN]^d, \\
\xi^{x_0}_{\alpha L N-t_0}\circ \theta_{t_0} \subset  (L_1+1+\alpha L )N(1+\epsilon/2)A_{\mu}& \subset & x_0+(1+\epsilon)(\alpha L N-t_0)A_{\mu}.
\end{eqnarray*}
Finally, if $N \ge t_2/L_1$, 
$$A_1^{N}\cap (A_2^{t_2,N}\circ \theta_{-L_1N}) 
 \cap A_3^N \subset A^{\alpha,L,N,\epsilon},$$
and we conclude with (\ref{probab1}), (\ref{probab2}) and (\ref{probab3}).
\end{proof}


We first prove the existence of $C>0$ such that, with a large probability, the point $(0,0)$ can not give birth to more than $Ct$ generations before time $t$:
\begin{lemme}
\label{histoirelineaire}
There $A,B,C>0$ such that for every $\lambda\in [0,\lambda_{\max}]^{\Ed}$, for every $t,\ell\ge 0$:
$$
\P_\lambda \left(
\begin{array}{c}
 \exists (x,s)\in \Zd\times [0,t]  \text{ and an infection path from }(0,0) \\
\text{ to } (x,s)\text{ with more than }Ct+\ell\text{ horizontal edges }\end{array}
\right) \le A \exp(-B\ell).
$$
\end{lemme}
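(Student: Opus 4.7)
The plan is a first-moment/counting argument in the Harris graphical construction. Death marks can be safely ignored (removing them only enlarges the set of infection paths), so I will count length-$N$ infection paths from $(0,0)$ ending at time $\le t$, where ``length'' means the number of horizontal (infection) marks used. Let $Z_N$ denote this count. The key preliminary observation is by prefix truncation: if some infection path from $(0,0)$ to $(x,s)$ with $s\le t$ uses more than $Ct+\ell$ horizontal edges, then cutting it after the $N$-th mark, with $N := \lceil Ct+\ell\rceil$, yields a path counted by $Z_N$. So by Markov's inequality it suffices to bound $\E_\lambda[Z_N]$.

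To compute $\E_\lambda[Z_N]$, I would parametrize such a path by an edge sequence $(e_1, \ldots, e_N)$ with $0 \in e_1$ and $e_i\cap e_{i+1}\ne\varnothing$ (at most $(2d)^N$ choices), together with strictly increasing times $0 < t_1 < \cdots < t_N \le t$ such that $t_i$ is a Poisson mark on $e_i$. By independence of Poisson processes on distinct edges (and the standard density for arrival times on a single edge), the expected number of valid time sequences for a fixed edge sequence is at most
\[
\int_{0 < t_1 < \cdots < t_N \le t} \prod_{i=1}^N \lambda_{e_i}\, dt_1\cdots dt_N \;\le\; \lambda_{\max}^N \cdot \frac{t^N}{N!}.
\]
Summing over edge sequences then gives $\E_\lambda[Z_N] \le (2d\lambda_{\max} t)^N/N!$, uniformly in $\lambda\in[0,\lambda_{\max}]^{\Ed}$.

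The final step is Stirling: $N! \ge (N/e)^N$ yields $\E_\lambda[Z_N] \le (2d\lambda_{\max} e t/N)^N$. Choosing $C = 4de\lambda_{\max}$ (any $C > 2de\lambda_{\max}$ works), for $N \ge Ct+\ell$ the base is at most $1/2$, so $\E_\lambda[Z_N] \le 2^{-(Ct+\ell)} \le 2^{-\ell}$, giving the stated bound with $A=1$ and $B=\log 2$. I do not anticipate any serious obstacle; the only point requiring a little care is the first-moment density formula when the edge sequence revisits an edge, which is routine since Poisson marks at disjoint times are independent even on a single edge. Note also that the argument does not use $\lambda_{\min}$, consistent with the hypothesis $\lambda\in[0,\lambda_{\max}]^{\Ed}$.
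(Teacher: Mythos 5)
Your proof is correct, and it takes a genuinely different route from the paper's. The paper argues by a Chernoff-type bound in the time variable: for each discrete walk $\gamma$ in $\Zd$ starting at $0$, it sets $X_\gamma=\1_{\{\gamma\text{ realized}\}}e^{-\alpha t(\gamma)}$ where $t(\gamma)$ is the time the endpoint is reached along $\gamma$; using that $t(\gamma)$ is a stopping time and the strong Markov property, one gets $\E_\lambda[X_\gamma]\le(\lambda_{\max}/(\alpha+\lambda_{\max}))^{|\gamma|}$, and then a union bound over walks plus $\{\gamma\text{ realized by time }t\}=\{X_\gamma\ge e^{-\alpha t}\}$ gives the estimate with $\alpha=2d\lambda_{\max}$ and $C$ chosen so that $(2d/(2d+1))^C=e^{-\alpha}$. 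You instead bound the expected number $\E_\lambda[Z_N]$ of length-$N$ mark sequences ending by time $t$ (deaths dropped, prefix-truncated to $N=\lceil Ct+\ell\rceil$), computing it via Mecke's formula as $(2d\lambda_{\max}t)^N/N!$, then applying Stirling and the choice $C>2de\lambda_{\max}$. Both approaches are really counting walks weighted by their realization probability and both ignore death marks in the upper bound; the paper's trades the Poisson integral for a geometric waiting-time estimate via the strong Markov property, while yours keeps the computation explicit at the level of Poisson intensities and substitutes a factorial bound for the Chernoff step. Your version is a bit more self-contained (no stopping-time machinery needed), and the one point you flag — that Mecke's formula handles repeated edges because the strict ordering $t_1<\cdots<t_N$ forces distinct Poisson points — is exactly the right thing to be careful about and is handled correctly.
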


\begin{proof}
Let $\alpha>0$ be fixed. For every path $\gamma$ in $\Zd$ starting from $0$ and eventually self-intersecting, we set
$$X_{\gamma}=\1_{\{\gamma\text{ is the projection on $\Zd$ of an infection path starting from }(0,0)\}}e^{-\alpha t(\gamma)},$$ 
where  $t(\gamma)$ is the time when the extremity is infected after visiting successively the previous points. 
More formally, if the sequence of points in  $\gamma$ is $(0=x_0,\dots,x_n)$ and if we set $T_0=0$, and for $k\in\{0,\dots,n-1\}$, $$T_{k+1}=\inf\left\{t>T_k; \omega_{\{x_k,x_{k+1}\}}([T_k,t])=1\text{ and }\omega_{x_k}([T_k,t])=0\right\},$$ we have $t(\gamma)=T_n$.
The random variable $t(\gamma)$ is a stopping time (it is infinite if $\gamma$ is not the projection of an infection path).

Let $\gamma$ be a path in $\Zd$ starting from $0$ and let  $f$ be an edge at the extremity of~$\gamma$. If we denote by $\gamma.f$ the concatenation of $\gamma$ with $f$, the strong Markov property at time $t(\gamma)$ ensures that
$$\E_{\lambda} [X_{\gamma.f}|\mathcal{F}_{t(\gamma)}]\le  X_{\gamma} \frac{\lambda_{\max}}{\alpha+\lambda_{\max}}, \text{ and so } \E [X_{\gamma}]\le \left( \frac{\lambda_{\max}}{\alpha+\lambda_{\max}} \right)^{|\gamma|}.$$
Now,
\begin{eqnarray*}
 && \P_\lambda \left(
\begin{array}{c}
 \exists (x,s)\in \Zd\times [0,t]  \text{ is an infection path from }(0,0) \\
\text{ to } (x,s)\text{ with more than }Ct+\ell\text{ horizontal edges}\end{array}
\right) \\
& = & \P_\lambda \left(\miniop{}{\cup}{\gamma:|\gamma|\ge Ct+\ell} \{X_{\gamma}\ge e^{-\alpha t}\} \right) \\
& \le & e^{\alpha t} \miniop{}{\sum}{\gamma:|\gamma|\ge Ct+\ell} \left(\frac{\lambda_{\max}}{\alpha+\lambda_{\max}}\right)^{|\gamma|}  \le e^{\alpha t} \miniop{}{\sum}{n\ge Ct+\ell} \left(\frac{2d\lambda_{\max}}{\alpha+\lambda_{\max}}\right)^{n} .
\end{eqnarray*}
To conclude, we take $\alpha=2d\lambda_{\max}$, and then  $C$ such that $(\frac{2d}{2d+1})^{C}=e^{-\alpha }$.
\end{proof}

\begin{proof}[Proof of Theorem~\ref{GDdessous}]
Let $\epsilon>0$ and $t>0$ be fixed. 
Obviously
\begin{eqnarray}
\label{majgross}
\P(\xi^0_t \not \subset (1+\epsilon)tA_{\mu})&\le &\P(\xi^0_t \not \subset (1+\epsilon)tA_{\mu},\xi^0_t  \subset [-Mt,Mt]^d)\\&&+\P(\xi^0_t \not \subset [-Mt,Mt]^d)\nonumber.
\end{eqnarray}
The second term is controlled by equation~\eqref{richard}

Assume that $\xi^0_t \not \subset (1+\epsilon)tA_{\mu}$: let $x \in \xi^0_t$ be such that $\mu(x) \ge (1+\epsilon)t$, $\|x\|_{\infty}\le Mt$ and let $\gamma$ be an infection path from $(0,0)$ to $(x,t)$.
With Lemma~\ref{histoirelineaire}, we choose $C>1,A_2,B_2>0$  such that for every $t \ge 0$,
\begin{equation}
\label{pastropdaretes}
 \P \left( \begin{array}{c}
\text{there exists an infection path from } (0,0) \text{ to } \Zd\times\{t\} \\
\text{with more than } Ct \text{ horizontal edges}
\end{array}
\right) \le A_2 \exp(-B_2t).
\end{equation}
 With the last estimate, we can assume that $\gamma$ has less than $Ct$ horizontal edges. 

We take $0<\alpha<1$ and $L=L(\alpha,\epsilon)$ large enough to apply Lemma~\ref{catendvers12} and such that 
\begin{equation}
\label{choiceL}
\frac{4C_\mu^+C}{\alpha L-1}\le \frac{\epsilon}3, \quad \alpha L\ge 2\quad \text{ and } L\ge 3.
\end{equation}
We fix an integer $N$ and we cut the space-time  $\Zd\times \R_+$ into space-time boxes:
$$\forall k \in \Zd \quad \forall n \in \N \quad B_N(k,n)=(2Nk+[-N,N]^d)\times(2Nn+[0,2N]).$$
We associate to the path $\gamma$ a finite sequence $\Gamma=(k_i,n_i,a_i,t_i)_{0 \le i \le {\ell}}$, where the $(k_i,n_i)\in\Z^d\times \N$ are the coordinates of  space-time boxes  and the  $(a_i,t_i)$ are points in $\Zd \times \R_+$ in the following manner:
\begin{itemize}
\item $k_0=0$, $n_0=0$, $a_0=0$ and $t_0=0$: $B_N(k_0,n_0)$ is the box containing the starting point  $(a_0,t_0)=(0,0)$ of the path $\gamma$.
\item Assume we have chosen $(k_i,n_i,a_i,t_i)$, where $(a_i,t_i)$ is a point in $\gamma$ and $(k_i,n_i)$ are the coordinates of the space-time box containing $(a_i,t_i)$. To the box $B_N(k_i,n_i)$, we add the larger box $(2Nk_i+[-LN,LN]^d)\times (2Nn_i+[0,\alpha LN])$, we take for $(a_{i+1},t_{i+1})$ the first point -- if it exists -- along $\gamma$ after $(a_i,t_i)$ to be outside this large box, and we take for $(k_{i+1},n_{i+1})$ the coordinates of the space-time box that containing $(a_{i+1},t_{i+1})$. Otherwise, we stop the process.
\end{itemize}
The idea is to extract from the path a sequence of large portions,  \ie  the portions of $\gamma$ between $(a_i,t_i)$ and $(a_{i+1},t_{i+1})$.
We have the following estimates:
\begin{eqnarray}
&&\forall i\in[0..{\ell}-1] \quad \|a_{i+1}-a_i\|_\infty\le (L+1)N \text{ and } \|a_l-x\|_\infty \le (L+1)N, \label{absolument} \\
&&\forall i\in[0..{\ell}-1] \quad 0 \le t_{i+1}-t_i\le \alpha LN \text{ and } 0 \le t-t_l \le \alpha LN,  \label{cequetuveux}\\
&& 1\le  {\ell} \le \frac{Ct}{(L-1)N}+\frac{t}{(\alpha L-1)N}+2 \le\frac{2Ct}{(\alpha L-1)N}+2 . \label{majoratl}
\end{eqnarray}
The two first estimates just say that -- spatially for~\eqref{absolument} and in time for~\eqref{cequetuveux}-- the point $(a_{i+1},t_{i+1})$ remains in the large box centered around $B_N(k_{i},n_{i})$, which contains $(a_i,t_i)$.
Now consider the third estimate.
We note that a path can get out of a large box either with its time coordinate -- and the number of such exits is smaller than  $\frac{t}{(\alpha L-1) N}+1$ --, or by the space coordinate -- , and the number of such exits is smaller than  $\frac{Ct}{(L-1)N}+1$. The last inequality comes from $C>1$ and $\alpha<1$.

To ensure that the space coordinates of the boxes associated to the path are all distinct, we extract a subsequence $\overline{\Gamma}=(k_{\varphi(i)})_{0 \le i \le \overline{{\ell}}}$ with the loop-removal process described by Grimmett--Kesten~\cite{grimmett-kesten}:
\begin{itemize}
\item $\varphi(0)=\max\{j\ge 0: \; \forall i \in [0..j] \; k_i=0\}$;
\item Assume we chose $\varphi(i)$, then we take, if it is possible,
\begin{eqnarray*}
j_0(i) & = & \inf\{j >\varphi(i): k_j \neq k_{\varphi(i)}\}, \\
\varphi(i+1) & = & \max\{j\ge j_0(i): \;  k_j=k_{j_0(i)}\}.
\end{eqnarray*}
and we stop the extraction process otherwise.
\end{itemize}
Then, as in~\cite{grimmett-kesten}
\begin{eqnarray*}
&&\|a_{\varphi(\overline{{\ell}})}-x\|_\infty \le (L+1)N, \\
&& 0 \le t-t_{\varphi(\overline{{\ell}})} \le \alpha LN, \\
&& \forall i \in [0..\overline{{\ell}}-1] \quad \|a_{\varphi(i)+1}-a_{\varphi(i+1)}\|_{\infty} \le 2N,\\
&& \forall i \in [0..\overline{{\ell}}-1] \quad |t_{\varphi(i)+1}-t_{\varphi(i+1)}| \le 2N.
\end{eqnarray*}
Moreover, the upper bound~(\ref{majoratl}) for ${\ell}$ ensures that
\begin{equation}
\label{majoratlbar}
1\le \overline{{\ell}}\le {\ell} \le \frac{2Ct}{(\alpha L-1)N}+2.
\end{equation}
On the other hand, as $\displaystyle \mu(x)-\mu(a_{\varphi(\overline{{\ell}})}-x) \le \mu(a_{\varphi(\overline{l})})$, we have with~(\ref{majoratlbar}):
\begin{eqnarray*}
(1+\epsilon)t -C^+_{\mu}(L+1)N
& \le &\mu \left( \sum_{i=0}^{\overline{{\ell}}-1}a_{\varphi(i+1)}- a_{\varphi(i)}\right) \\
& \le &\sum_{i=0}^{\overline{{\ell}}-1} \mu(a_{\varphi(i+1)}-a_{\varphi(i)+1}) +\sum_{i=0}^{\overline{{\ell}}-1} \mu(a_{\varphi(i)+1}-a_{\varphi(i)}) \\
& \le &2N C_\mu^+\overline{{\ell}} +\sum_{i=0}^{\overline{{\ell}}-1} \mu(a_{\varphi(i)+1}-a_{\varphi(i)})\\
& \le &2NC_\mu^+\left(\frac{2Ct}{(\alpha L-1)N}+2\right) +\sum_{i=0}^{\overline{{\ell}}-1} \mu(a_{\varphi(i)+1}-a_{\varphi(i)}).
\end{eqnarray*}
This ensures, with the choice~(\ref{choiceL}) we made for $\alpha,L$, that
\begin{equation}
\label{senex}
\sum_{i=0}^{\overline{{\ell}}-1} \mu(a_{\varphi(i)+1}-a_{\varphi(i)}) \ge (1+2\epsilon/3)t -2C^+_{\mu}(L+1)N.
\end{equation}
In other words, even after the extraction process, the sum of the lengths of the crossings remains of order  $(1+2\epsilon/3)t$.

Let $k \in \Zd$ and $n \in \N$. We say now that $B_N(k,n)$ is good if 
$$\text{the event }A^{\alpha,L,N,\epsilon/3} \circ T_{2kN} \circ \theta_{2nN} \text{ occurs},$$ 
and bad otherwise.
If $B_N(k_{\varphi(i)},n_{\varphi(i)})$ is good, then the path exits the corresponding large box by the time coordinate, and thus $\mu(a_{\varphi(i)+1}-a_{\varphi(i)})\le (1+\epsilon/3)(t_{\varphi(i)+1}-t_{\varphi(i)})$; this ensures that
\begin{eqnarray*}
\mu \left(\sum_{i: \; B_N(k_{\varphi(i)},n_{\varphi(i)}) \text{ good}}(a_{{\varphi(i)}+1}-a_{\varphi(i)}) \right) 
& \le &  \sum_{i: \; B_N(k_{\varphi(i)},n_{\varphi(i)})\text{ good}} \mu(a_{{\varphi(i)}+1}-a_{\varphi(i)}) \\
& \le & (1+\frac{\epsilon}3)\sum_{i:\; B_N(k_{\varphi(i)},n_{\varphi(i)}) \text{ good}}(t_{{\varphi(i)}+1}-t_{\varphi(i)}) \\
& \le & (1+\frac{\epsilon}3) t.
\end{eqnarray*}
With~\eqref{senex}, it implies that
$$ \sum_{i:  \; B_N(k_{\varphi(i)},n_{\varphi(i)})  \text{ bad}}\mu(a_{{\varphi(i)}+1}-a_{\varphi(i)})  \ge \frac{\epsilon}3t-2C^+_{\mu}(L+1)N,$$ and then, with~\eqref{absolument}
$$\Card{\{i:  \; B_N(k_{\varphi(i)},n_{\varphi(i)})  \text{ bad}\}} \ge \frac{\epsilon t}{3C^+_\mu (L+1)N}-2.$$
In other words, if $t>0$, if $x$ is such that $\mu(x) \ge (1+\epsilon)t$, if there exists an infection path $\gamma$ from $(0,0)$ to $(x,t)$ with less than $Ct$ horizontal edges, the associated sequence $\overline{\Gamma}$ has a number of bad boxes proportional to $t$.

Note that Lemma~\ref{catendvers12} says that for any deterministic family $n=(n_k)_{k \in \Zd} \in \N^{\Zd}$,
the field $(\eta^n_k)_{k \in \Zd}$, defined by 
$\eta^n_{k}=\1_{\{ B_N(k,n_{k})  \text{ good}\}}$ is locally dependent and that
$$\lim_{N \to +\infty} \P(B_N(0,0) \text{ good})=1.$$
By the extraction process, the spatial coordinates of the boxes in $\overline{\Gamma}$ are all distinct. With the comparison theorem by Liggett--Schonmann--Stacey~\cite{LSS}, we can, for any $p_1<1$, take $N$  large enough to ensure that for any family  $n=(n_k)_{k \in \Zd} \in \N^{\Zd}$, the law of the field $(\eta^n_k)_{k \in \Zd}$ under $\P$ stochastically dominates a product on $\Zd$ of Bernoulli laws with parameter $p_1$. Thus, if $x$ is such that $\mu(x) \ge (1+\epsilon)t$, then
\begin{eqnarray*}
&  &\P \left(\begin{array}{c}
\text{there exists an infection path $\gamma$ from $(0,0)$ to $(x,t)$}\\
\text{with less than $Ct$ horizontal edges and such that $\overline{\Gamma}=\overline{\Gamma}(\gamma)$ has}\\
\text{at least $\frac{\epsilon t}{3C^+_\mu (L+1)N}-2$ bad boxes}
         \end{array} 
\right) \\
& \le & \sum_{{\ell}=1}^{\frac{2Ct}{(\alpha L-1)N}+2} \sum_{\Card{\overline{\Gamma}}={\ell}}2^{\ell}(1-p_1)^{\frac{\epsilon t}{3C^+_\mu (L+1)N}-1}
\\ &=& (1-p_1)^{\frac{\epsilon t}{3C^+_\mu (L+1)N}-1}\sum_{{\ell}=1}^{\frac{2Ct}{(\alpha L-1)N}+2}2^{\ell}\textrm{Card}{\{\overline{\Gamma};\Card{\overline{\Gamma}}=\ell\}}
\end{eqnarray*}
A classical counting argument gives the existence of a constant $K=K(d,\alpha,L)$ independent of $N$ such that
$$\forall \ell\ge 1\quad \textrm{Card}{\{\overline{\Gamma};\Card{\overline{\Gamma}}=\ell\}}\le K^{\ell}.$$
We get then an upper bound for our probability of the form
$$A \frac{t}N \left( (1-p_1)^{\frac{\epsilon}{3C_\mu^+(L+1)}} (2K)^{\frac{2C}{\alpha L-1}}\right)^{t/N},$$ which leads to a bound of the form $A_3\exp(-B_3 t)$
as soon as  $p_1$ is close enough to~$1$. 
Summing over all $x\in [-Mt,Mt]^d$, we have again an exponential bound.
With this last upper bound,~(\ref{majgross}) and (\ref{pastropdaretes}), we end the proof of Theorem~\ref{GDdessous}.
\end{proof}

\begin{proof}[Proof of Theorem~\ref{dessouscchouette}]
We first prove there exist $A,B>0$ such that
\begin{equation}
\label{GDTgrand}
\forall T > 0\quad \P (\exists t\ge T\quad \xi^0_t \not \subset (1+\epsilon)tA_\mu)\le A\exp(-BT).
\end{equation}
Indeed,
\begin{eqnarray*}
& & \P (\exists t\ge T\quad \xi^0_t \not \subset (1+\epsilon)tA_\mu))\\ & \le & \P(\exists n\in\N \quad \xi^0_{T+n} \not \subset (1+\epsilon/2)(T+n)A_\mu)\\
& & + \P(\exists n\in\N \quad \exists t\in [0,1]\quad \xi^0_{T+n} \subset (1+\epsilon/2)(T+n)A_\mu, \xi^0_{T+n+t} \not \subset ((1+\epsilon)(T+n)A_\mu) \\
& \le & \sum_{n\ge 0} \P(\xi^0_{T+n} \not \subset (1+\epsilon/2)(T+n)A_\mu)\\
& & +\sum_{n\ge 0}\P(\exists t\in [0,1]\quad \xi^0_{T+n} \subset (1+\epsilon/2)(T+n)A_\mu, \xi^0_{T+n+t} \not \subset ((1+\epsilon)(T+n)A_\mu).
\end{eqnarray*}
The first sum can be controlled with Theorem~\ref{GDdessous}.
For the second sum, the Markov property gives for any $\lambda\in\Lambda$, 
\begin{eqnarray*}
& &\P_{\lambda}(\exists t\in [0,1]\quad \xi^0_{T+n} \subset (1+\epsilon/2)(T+n)A_\mu, \; \xi^0_{T+n+t} \not \subset ((1+\epsilon)(T+n)A_\mu)\\ 
&  \le &\sum_{x\in (1+\epsilon/2)(T+n)A_\mu}\P_{x.\lambda}(\exists t\in [0,1]\quad \xi^0_{t} \not \subset (\epsilon/2)(T+n)A_\mu)\\
&  \le &\Card{(1+\epsilon/2)(T+n)A_\mu}\P(H_1^0 \not \subset (\epsilon/2)(T+n)A_\mu)\le A \exp(-B (T+n)),
\end{eqnarray*}
where the last upper bound comes from a comparison with the Richardson model. 
We conclude the proof of~(\ref{GDTgrand}) by integrating with respect to $\lambda$.

Let us prove now the existence of $A,B>0$ such that
\begin{equation}
\label{GDHpetit}
\forall r>0\quad \P (H^0_r \not\subset(1+\epsilon)r A_{\mu})\le A\exp(-Br).
\end{equation} 
With~\eqref{richard}, we can find $A_1,B_1>0$ and $c<1$ such that
$\P (H^0_{cr} \not\subset r A_{\mu})\le A_1\exp(-B_1r).$
Now,
\begin{eqnarray*}
\P (H^0_r \not\subset(1+\epsilon)r A_\mu) 
& \le & \P (H^0_{cr} \not\subset r A_\mu)+\P(\exists t \in( cr,r)  \quad \xi^0_t \not\subset (1+\epsilon)r A_\mu) \\
& \le & A_1\exp(-B_1r) +\P(\exists t \ge cr \quad \xi^0_t \not\subset (1+\epsilon)t A_\mu),
\end{eqnarray*}
and we conclude the proof of~\eqref{GDHpetit} with~(\ref{GDTgrand}).
To obtain~\eqref{decadix}, we just need to note that $t\mapsto H_t$ is non-decreasing.

Finally, for $x \in \Zd \backslash\{0\}$, 
$$\P(t(x) \le (1-\epsilon)\mu(x)) \le \P(H^0_{(1-\epsilon)\mu(x)} \not\subset \mu(x)A_\mu).$$
Applying~(\ref{GDHpetit}), we end the proof of~\eqref{defontenay}, and thus of Theorem~\ref{dessouscchouette}. 
\end{proof}
\section{About the order of the deviations}
\label{bonnevitesse}

By Theorems~\ref{theoGDUQ} and~\ref{dessouscchouette}, we have for $\nu$-almost every $\lambda$ and each $\epsilon>0$:

$$\miniop{}{\limsup}{x\to +\infty}\frac1{\|x\|}\log \Pbarre_{\lambda}\left(\frac{t(x)}{\mu(x)}\not\in [1-\epsilon,1+\epsilon]\right)<0.$$

To see that the exponential decrease in $\|x\|$ is optimal, we need too see that $\miniop{}{\liminf}{x\to +\infty}\frac1{\|x\|}\log \Pbarre_{\lambda}\left(\frac{t(x)}{\mu(x)}\not\in [1-\epsilon,1+\epsilon]\right)>-\infty.$


In fact, we will prove here that for every  $(s,t)$ with $0<s<t$, there exists a constant $\gamma>0$
such that for each $\lambda\in\Lambda$ and each $x\in\Zd$,
\begin{eqnarray*}
\P_\lambda(t(x)\in [s,t]\|x\|_1) & \ge & \exp(-\gamma\|x\|_1).
\end{eqnarray*}
\begin{proof}
Let $s,t$ with $0 <s <t$.
For each $u \in \Zd$ such that $\|u\|_1=1$, we define $T_u=\inf\{t \ge 0: \; \xi^0_t=\{u\},\quad \forall s\in [0,t)\quad \xi^0_s=\{0\}\}$. We are going to prove that
$$\exists \gamma>0 \quad \forall \lambda \in \Lambda \quad \forall u \in \Zd, \; \|u\|_1=1 \quad \P_\lambda(T_u \in [s,t]) \ge e^{-\gamma}.$$
In order to ensure that $T_u\in [s,t]$, it it sufficient to satisfy 
\begin{itemize}
\item The lifetime of the particle at $(0,0)$ is strictly between $(s+t)/2$ and $t$, which happens with probability  $e^{-(s+t)/2}-e^{-t}$ under $\P_\lambda$ ;
\item The first opening of the bond between $0$ and $u$ happens strictly between $s$ and $(s+t)/2$,  which happens with probability
$$\exp(-\lambda_{\{0,u\}} s)-\exp(-\lambda_{\{0,u\}} (s+t)/2) \ge \exp(-\lambda_{\max} s)(1-\exp(-\lambda_{\min}(t-s)/2))$$ under $\P_\lambda$;
\item There is no opening between time $0$ and time $t$, on the set $J$ constituted by the $4d-2$ bonds that are neighour of $0$ or  $u$ and differ from $\{0,u\}$, which happens under $\P_\lambda$ with probability
$$\prod_{j\in J} \exp(-\lambda_j t) \ge \exp(-(4d-2)\lambda_{\max} t);$$
\item There is no death at site $u$ between $0$ and $t$, which happens under $\P_\lambda$ with probability $e^{-t}$.
\end{itemize}
Then, using the independence of the Poisson processes, we get
\begin{eqnarray*}& &\P_{\lambda}(T_u \in [s,t])\\
& \ge & (e^{-(s+t)/2)}-e^{-t})e^{-t} e^{-(4d-2)\lambda_{\max} t} e^{-\lambda_{\max} s}(1-e^{-\lambda_{\min}(t-s)/2})=e^{-\gamma}.
\end{eqnarray*}
Moreover, $T_u$ is obviously a stopping time.
Then, applying the strong Markov property $\|x\|_1$ times, we get,
$$\P_{\lambda}(t(x)\in [s,t]\|x\|_1)\ge \exp(-\gamma\|x\|_1).$$
This gives the good speed for both upper and lower large deviations.
\end{proof}
Note that the order of the large deviations is the same for upper and lower deviations, as happens for the chemical distance in Bernoulli percolation (see Garet--Marchand~\cite{GM-large}).  Conversely, it is known that these orders may differ for first-passage percolation (see Kesten~\cite{kesten} and Chow--Zhang~\cite{chow-zhang}).



\def\refname{References}
\bibliographystyle{plain}


\end{document}